\newtheorem{dfn}{Definition}[section]
\newtheorem{thm}[dfn]{Theorem}
\newtheorem{prop}[dfn]{Proposition}
\newtheorem{lem}[dfn]{Lemma}
\newtheorem{exm}[dfn]{Example}
\newtheorem{rem}[dfn]{Remark}
\def\Ker{\mathop{\mathrm{Ker}}\nolimits}
\def\Q{\mathbb{Q}}
\def\R{\mathbb{R}}
\def\Z{\mathbb{Z}}
\def\C{\mathbb{C}}
\def\F{\mathbb{F}}
\def\Spec{\mathop{\mathrm{Spec}}\nolimits}
\def\Pic{\mathop{\mathrm{Pic}}\nolimits}
\def\hatBr{\mathop{\widehat{\mathrm{Br}}}\nolimits}
\def\g1{\mathop{\gamma_1}\nolimits}
\def\g2{\mathop{\gamma_2}\nolimits}
\def\O{\mathop{\mathscr{O}}\nolimits}
\def\disc{\mathop{\mathrm{disc}}}
\def\rank{\mathop{\mathrm{rank}}\nolimits}
\def\p{\mathfrak{p}}
\def\q{\mathfrak{q}}
\def\A{\mathbb{A}}
\def\Af{\mathbb{A}_f}
\def\`e{\mathrm{\grave{e}}}
\def\'e{\mathrm{\acute{e}}}
\def\GL{\mathrm{GL}}
\title[Supersingular reduction of K3 surfaces with CM]{On the supersingular reduction of $K3$ surfaces with complex multiplication}
\address{Department of Mathematics, Faculty of Science, Kyoto University, Kyoto 606-8502, Japan}
\email{kito@math.kyoto-u.ac.jp}
\date{\today}
\subjclass[2010]{Primary 14J28 ; Secondary 11G25, 14G15, 14K22}
\keywords{$K3$ surface, Complex multiplication, Good reduction, Formal Brauer group}
\author{Kazuhiro Ito}
\begin{document}
\maketitle

\begin{abstract}
We study the good reduction modulo $p$ of $K3$ surfaces with complex multiplication. If a $K3$ surface with complex multiplication has good reduction, we calculate the Picard number and the height of the formal Brauer group of the reduction. Moreover, if the reduction is supersingular, we calculate its Artin invariant under some assumptions. Our results generalize some results of Shimada for $K3$ surfaces with Picard number $20$. Our methods rely on the main theorem of complex multiplication for $K3$ surfaces by Rizov, an explicit description of the Breuil-Kisin modules associated with Lubin-Tate characters due to Andreatta, Goren, Howard, and Madapusi Pera, and the integral comparison theorem recently established by Bhatt, Morrow, and Scholze. 
\end{abstract}

\maketitle

\section{Introduction}\label{Introduction}

Let $X_{\C}$ be a projective $K3$ surface over $\C$. Recall that a \textit{$K3$ surface} $X$ over a field is a projective smooth surface with trivial canonical bundle satisfying $H^1(X,\O_X)=0$. Let
$$T(X_{\C}):=\Pic(X_{\C})^{\perp} \subset H^{2}(X_{\C}, \Z(1))$$
be the transcendental part of the singular cohomology, which has the $\Z$-Hodge structure coming from $H^{2}(X_{\C}, \Z(1))$. Let $$\mathrm{End}_{\mathrm{Hdg}}(T(X_{\C}))$$ be the $\Z$-algebra of $\Z$-linear endomorphisms on $T(X_{\C})$ preserving the $\Z$-Hodge structure on it.

Let $F$ be a totally real number field. Let $E$ be a totally imaginary quadratic extension of $F$. (Such field $E$ is called a \textit{$CM$ field}.)  The $K3$ surface $X_{\C}$ is said to have \textit{complex multiplication} $(CM)$ by $E$ if the $\Q$-algebra 
\[
E(X_{\C}):=\mathrm{End}_{\mathrm{Hdg}}(T(X_{\C}))\otimes_{\Z}{\Q}
\]
is isomorphic to $E$ and $T(X_{\C})\otimes_{\Z}{\Q}$ is a one-dimensional $E(X_{\C})$-vector space. 
In the rest of this introduction, 
we fix a projective $K3$ surface $X_\C$ over $\C$ 
with $CM$ by $E$.
We fix an isomorphism $E(X_{\C})\simeq E$. Then, the canonical morphism
\[
\epsilon_{X_{\C}}\colon E(X_{\C})\rightarrow \mathrm{End}_{\C}(H^{2, 0}(X_{\C}))\simeq \C
\]
gives an embedding $E \hookrightarrow \C$. By this embedding, we consider $E$ as a subfield of $\C$. It is known that $X_{\C}$ has a model $X_K$ over a number field $K \subset \C$ which contains $E$; see \cite[Theorem 4]{Shafarevich2} and \cite[Corollary 3.9.4]{Rizov10}. 

We fix a prime number $p$ and a finite place $v$ of $K$ whose residue characteristic is $p$. 
Recall that we have extensions of fields
\[
\Q \subset F \subset E \subset K \subset \C
\]
with $[E:F]=2$.
Let $\p$, $\q$ be the finite places of $E$, $F$ below $v$, respectively. We denote the completion of $K$ at $v$ by $K_v$, the valuation ring of $K_v$ by $\mathscr{O}_{K_v}$, the residue field of $K_v$ by $k(v)$, and an algebraic closure of $k(v)$ by $\overline{k(v)}$. Similarly, we use the same notation $E_{\p}$, $\O_{E_{\p}}$, $k(\p)$, $F_{\q}$, $\O_{F_{\q}}$, $k(\q)$ for $\p$ and $\q$. 

It is conjectured that $X_{K}$ has \textit{potential good reduction} at $v$, i.e.\ after replacing $K_v$ by a finite extension of it, there exists a proper smooth algebraic space $\mathscr{X}$ over $\Spec\mathscr{O}_{K_v}$ whose generic fiber 
$
\mathscr{X} \otimes_{\mathscr{O}_{K_v}} K_v
$
is isomorphic to $X_{K_v}:=X_K\otimes_{K}{K_v}$.
For sufficient conditions for $X_{K}$ to have potential good reduction at $v$, see Remark \ref{good reduction}.
 In this paper, we assume that there exists such an algebraic space $\mathscr{X}$ over $\Spec\mathscr{O}_{K_v}$, and study the geometric special fiber of $\mathscr{X}$, which is denoted by 
\[
\mathscr{X}_{\overline{v}}:=\mathscr{X}\otimes_{\mathscr{O}_{K_v}}{\overline{k(v)}}.
\]
We remark that $\mathscr{X}_{\overline{v}}$ is a $K3$ surface over ${\overline{k(v)}}$.

Our first main result is the calculation of the Picard number of $\mathscr{X}_{\overline{v}}$ and the height of the formal Brauer group of $\mathscr{X}_{\overline{v}}$. (For the formal Brauer groups of $K3$ surfaces in characteristic $p$, see Section \ref{Formal Brauer group}.)
\begin{thm}\label{supersingular reduction}
\begin{enumerate}
\item[(1)] If $\q$ splits in $E$, the Picard number of $\mathscr{X}_{\overline{v}}$ is equal to $22-[E:\Q]$, and the height of the formal Brauer group of $\mathscr{X}_{\overline{v}}$ is $[E_{\p}:\Q_p]$.
\item[(2)] If $\q$ does not split in $E$
(i.e.\ $\q$ is either inert or ramified in $E$),
the Picard number of $\mathscr{X}_{\overline{v}}$ is $22$, and the height of the formal Brauer group of $\mathscr{X}_{\overline{v}}$ is $\infty$. In this case, \ $\mathscr{X}_{\overline{v}}$ is a supersingular $K3$ surface over $\overline{k(v)}$. 
\end{enumerate}
\end{thm}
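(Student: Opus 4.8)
The plan is to read off both invariants from the Newton slopes of the crystalline Frobenius acting on $H^2_{\mathrm{crys}}(\mathscr{X}_{\overline{v}}/W)$, where $W=W(\overline{k(v)})$. Recall that for a $K3$ surface the height of the formal Brauer group equals the number of Newton slopes that are strictly less than $1$ (in the normalization where the slopes of $H^2$ lie in $[0,2]$ and sum to $22$), while by the Tate conjecture for $K3$ surfaces, which is known in every characteristic, the Picard number equals the number of Frobenius eigenvalues $\alpha$ (over a finite field of definition $\F_q$) for which $\alpha/q$ is a root of unity. Thus everything reduces to the Frobenius action on the transcendental part: the algebraic classes coming from $\Pic(X_{\C})$ specialize to $22-[E:\Q]$ independent slope-$1$ Tate classes, since $\rho(X_{\C})=22-\rank_{\Z}T(X_{\C})=22-[E:\Q]$, and the specialization map on Picard groups is injective.

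First I would transport the complex-multiplication structure to characteristic $p$. Good reduction makes $H^2_{\mathrm{et}}(X_{\overline{K_v}},\Z_p(1))$ crystalline, and the integral comparison theorem of Bhatt–Morrow–Scholze identifies the associated Breuil–Kisin module with the one built from $H^2_{\mathrm{crys}}(\mathscr{X}_{\overline{v}}/W)$, so crystalline slopes can be computed from the $p$-adic Galois representation. By Rizov's main theorem of complex multiplication the $G_K$-action on the transcendental part $T(X_{\C})\otimes\Q_p$ is abelian; after restriction to the decomposition group at $v$ and the decomposition $T(X_{\C})\otimes\Q_p\simeq\bigoplus_{\p'\mid p}T_{\p'}$ along the places $\p'$ of $E$ above $p$, each summand $T_{\p'}$ is a crystalline character, free of rank one over $E_{\p'}$, arising from a Lubin–Tate character twisted by the reflex norm. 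The Hodge–Tate weights are dictated by the Hodge decomposition of $T(X_{\C})$: the distinguished embedding $\tau_0=\epsilon_{X_{\C}}$ (which corresponds to the place $\p$) carries weight $2$, its complex conjugate $\overline{\tau_0}=\tau_0\circ c$ (corresponding to $\overline{\p}=c(\p)$, where $c$ generates $\mathrm{Gal}(E/F)$) carries weight $0$, and all remaining embeddings carry weight $1$.

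Next I would compute the slope of each $T_{\p'}$ using the explicit description of the Breuil–Kisin modules of Lubin–Tate characters due to Andreatta–Goren–Howard–Madapusi Pera. Since $T_{\p'}$ is free of rank one over the field $E_{\p'}$, its underlying isocrystal admits a faithful $E_{\p'}$-action and so cannot split as a sum of isocrystals of smaller $\Q_p$-dimension; hence it is isoclinic, of slope equal to the average $\bigl(\sum_{\tau}\mathrm{wt}(\tau)\bigr)/[E_{\p'}:\Q_p]$ of its Hodge–Tate weights. If $\q$ splits in $E$ then $\p\neq\overline{\p}$: the weight-$2$ embedding lies in $T_{\p}$ and the weight-$0$ embedding in $T_{\overline{\p}}$, so $T_{\overline{\p}}$ is isoclinic of slope $1-1/[E_{\p}:\Q_p]$ and $\Q_p$-dimension $[E_{\p}:\Q_p]$, while $T_{\p}$ is isoclinic of slope $1+1/[E_{\p}:\Q_p]$ and every other summand has slope $1$. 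The slopes below $1$ thus occur with total multiplicity $[E_{\p}:\Q_p]$, giving height $[E_{\p}:\Q_p]$ and exhibiting the Newton polygon of a finite-height $K3$ surface with $h=[E_{\p}:\Q_p]$. If $\q$ does not split then $\p=\overline{\p}$, so both the weight-$2$ and weight-$0$ embeddings lie in the single summand $T_{\p}$; its average weight is $[E_{\p}:\Q_p]/[E_{\p}:\Q_p]=1$, whence $T_{\p}$, and therefore all of $H^2_{\mathrm{crys}}$, is isoclinic of slope $1$. There are no slopes below $1$, the height is $\infty$, and $\mathscr{X}_{\overline{v}}$ is supersingular.

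Finally I would pin down the Picard numbers. In the non-split case, a supersingular $K3$ surface over $\overline{k(v)}$ has Picard number $22$ by the equivalence of Artin- and Shioda-supersingularity, itself a consequence of the Tate conjecture, which gives (2). In the split case, specialization already yields $\rho(\mathscr{X}_{\overline{v}})\geq 22-[E:\Q]$, and for the reverse inequality I would show that no transcendental class becomes algebraic: the summands $T_{\p}$ and $T_{\overline{\p}}$ have slope $\neq 1$ and so contribute no Tate class, while the slope-$1$ summands $T_{\p'}$ with $\p'\neq\p,\overline{\p}$ have Frobenius eigenvalues equal to values of the CM Hecke character, and since $X_{\C}$ has complex multiplication by the full field $E$ these are Galois-conjugate generators of $E$ over $\Q$, never a root of unity times $q$. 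Hence $\rho(\mathscr{X}_{\overline{v}})=22-[E:\Q]$, proving (1). The main obstacle is the slope computation of the third paragraph: one must track the reflex-norm and CM-type bookkeeping carefully to assign the Hodge–Tate weights to the correct places, and invoke the Andreatta–Goren–Howard–Madapusi Pera description precisely enough to control the Breuil–Kisin module, and in particular its Frobenius, when $E_{\p}/\Q_p$ is ramified, where the integral structure rather than merely the isocrystal must be handled with care.
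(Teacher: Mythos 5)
Your route is genuinely different from the paper's. The paper proves Theorem \ref{supersingular reduction} purely $\ell$-adically: by Rizov's Theorem \ref{CM main theorem}, the image of a Frobenius at $v$ in $G(\Af)$ is $z\eta$ with $z$ an explicit idele supported above $p$ and $\eta\in G(\Q)\subset E^{\times}$, so the Frobenius eigenvalues on $T(X_{\C})\otimes_{\Z}\Q_{\ell}$ are the conjugates of the single algebraic number $\eta$ with $\eta c(\eta)=1$; Lemma \ref{lemma1} decides when $\eta$ is a root of unity (Kronecker's theorem in the non-split case, a valuation count at $\p$ in the split case), and Proposition \ref{height-Frobenius} converts this into the Picard number and the height. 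Your crystalline computation arrives at the same Newton slopes, and the isoclinicity-plus-weak-admissibility step (slope equals the average of the Hodge--Tate weights for a rank-one $E_{\p'}$-object) is sound. Note, however, that for the slopes you need neither Bhatt--Morrow--Scholze nor the Andreatta--Goren--Howard--Madapusi Pera description of Breuil--Kisin modules: the identity $t_N=t_H$ for the admissible filtered $\varphi$-module already gives everything with $\Q_p$-coefficients. That is just as well, because the integral statements you would be tempted to import (Propositions \ref{decomposition} and \ref{BK-module chi(-1)} of the paper) are proved only under the extra hypotheses of Theorem \ref{Artin invariant} and themselves use Theorem \ref{supersingular reduction} as an input, so leaning on them here would be circular. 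Kept at the level of isocrystals, your height computation is correct and arguably more conceptual than the paper's, at the cost of much heavier machinery.

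The genuine gap is in the split-case Picard count. You dismiss the slope-$1$ summands $T_{\p'}$ with $\p'\neq\p,\overline{\p}$ on the grounds that their Frobenius eigenvalues are ``Galois-conjugate generators of $E$ over $\Q$, never a root of unity times $q$.'' That is neither justified nor quite true: the eigenvalues are the conjugates of the element $\eta\in G(\Q)$ above, which generates a subfield $\Q(\eta)\subseteq E$ that may be proper, and since every archimedean absolute value of $\eta$ equals $1$, archimedean considerations alone can never rule out that $\eta$ is a root of unity. The correct argument, which your own slope computation makes available, is: all eigenvalues on the transcendental part are conjugates of one algebraic number, and in the split case that number has a conjugate of nonzero $p$-adic valuation (this is exactly your slope $1\pm 1/[E_{\p}:\Q_p]$ at $\p$ and $\overline{\p}$), hence it is not a root of unity and neither is any of its conjugates. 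Without this step---the content of Lemma \ref{lemma1}(1)---the upper bound $\rho(\mathscr{X}_{\overline{v}})\leq 22-[E:\Q]$ is not established.
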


Since $\dim_{\Q}T(X_{\C})\otimes_{\Z}{\Q}=[E:\Q]$, the Picard number of $X_{\C}$ is equal to $22-[E:\Q]$. Theorem \ref{supersingular reduction} shows if $\q$ splits in $E$, the Picard number does not jump in specialization. On the other hand, if $\q$ does not split in $E$, the Picard number always jumps in specialization.

Assume that $\q$ does not split in $E$. Then $\Pic(\mathscr{X}_{\overline{v}})$ endowed with the intersection product is a lattice of rank $22$ by Theorem \ref{supersingular reduction} (2). The determinant of the matrix representation of the intersection product is denoted by $\disc \Pic(\mathscr{X}_{\overline{v}})$. We call it the \textit{discriminant} of $\Pic(\mathscr{X}_{\overline{v}})$. It is known that $\disc \Pic(\mathscr{X}_{\overline{v}})$ is of the form $-p^{2a}$ for an integer $1\leq a \leq10$; see \cite[Proposition 1 in Section 8]{Rudakov-Shafarevich}, \cite[Theorem in Section 8]{Rudakov-Shafarevich}, and \cite[Chapter 17, Proposition 2.19]{Huybrechts}. The integer $a$ is called the \textit{Artin invariant} of the supersingular $K3$ surface $\mathscr{X}_{\overline{v}}$. 

Our second main result is the calculation of the Artin invariant of $\mathscr{X}_{\overline{v}}$ under some assumptions on the discriminant of the Picard group and the endomorphism algebra of the Hodge structure. 
\begin{thm}\label{Artin invariant}
Assume that $\q$ does not split in $E$, i.e.\ the $K3$ surface $\mathscr{X}_{\overline{v}}$ is supersingular. Moreover, we assume that both of the following conditions hold:
\begin{itemize}
\item The discriminant $\disc \Pic(X_{\C})$ of the lattice $\Pic(X_{\C})$ is not divisible by $p$.
\item The isomorphism $E(X_{\C}) \simeq E$ induces an isomorphism
$$
\mathrm{End}_{\mathrm{Hdg}}(T(X_{\C}))\otimes_{\Z}{\Z_{(p)}}\simeq \mathscr{O}_{E}\otimes_{\Z}{\Z_{(p)}},
$$ 
where $\mathscr{O}_{E}$ is the ring of integers of $E$.
\end{itemize}
Then the Artin invariant of $\mathscr{X}_{\overline{v}}$ is equal to $[k(\q):\F_{p}]$.
\end{thm}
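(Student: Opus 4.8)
The plan is to compute the $p$-adic valuation of $\disc\Pic(\mathscr{X}_{\overline{v}})$. Since the Artin invariant $a$ is defined by $\disc\Pic(\mathscr{X}_{\overline{v}})=-p^{2a}$, and since the discriminant group of a supersingular $K3$ lattice is known (by Rudakov--Shafarevich and Ogus) to be elementary $p$-abelian, it suffices to determine $v_p(\disc\Pic(\mathscr{X}_{\overline{v}}))=2a$. First I would use the specialization map $\Pic(X_{\C})\hookrightarrow\Pic(\mathscr{X}_{\overline{v}})$, which is injective and compatible with the intersection pairings. By the first assumption $p\nmid\disc\Pic(X_{\C})$, the lattice $\Pic(X_{\C})\otimes_{\Z}\Z_p$ is unimodular and hence splits off as an orthogonal direct summand of $\Pic(\mathscr{X}_{\overline{v}})\otimes_{\Z}\Z_p$. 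Writing $L$ for the orthogonal complement of the image of $\Pic(X_{\C})$ inside $\Pic(\mathscr{X}_{\overline{v}})$, which has rank $[E:\Q]=\rank T(X_{\C})$ by Theorem \ref{supersingular reduction}(2), this reduces the problem to proving $v_p(\disc L)=2[k(\q):\F_p]$.

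Next I would set up the $p$-adic comparison. Using the good reduction of $X_{K_v}$ together with the integral comparison theorem of Bhatt--Morrow--Scholze, I would relate the crystalline cohomology $H^2_{\mathrm{cris}}(\mathscr{X}_{\overline{v}}/W)$ over the Witt vectors $W=W(\overline{k(v)})$, with its Frobenius $\phi$, to the transcendental part $T(X_{\C})\otimes_{\Z}\Z_p$ of the $p$-adic etale cohomology, regarded as a crystalline representation of $G_{K_v}$. The second assumption ensures that $T(X_{\C})\otimes_{\Z}\Z_p$ is free of rank one over $\O_E\otimes_{\Z}\Z_p=\prod_{\p'\mid p}\O_{E_{\p'}}$, yielding an orthogonal decomposition $T(X_{\C})\otimes_{\Z}\Z_p=\bigoplus_{\p'\mid p}T_{\p'}$ for the $E/F$-Hermitian intersection pairing. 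Since $\Pic(X_{\C})$ and $T(X_{\C})$ are orthogonal complements in the unimodular lattice $H^2(X_{\C},\Z(1))$, their discriminants have the same $p$-adic valuation, so $T(X_{\C})\otimes_{\Z}\Z_p$ is in fact self-dual. Via the Tate conjecture for $\mathscr{X}_{\overline{v}}$, the lattice $L\otimes_{\Z}\Z_p$ is identified with the Tate lattice $D_T^{\phi=p}$ inside the $F$-crystal $D_T$ attached to $T(X_{\C})\otimes_{\Z}\Z_p$, and the intersection pairing on $L\otimes_{\Z}\Z_p$ corresponds, through the comparison, to the restriction of the self-dual pairing; its discriminant is therefore exactly $p^{2a}$.

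It remains to compute this discriminant from the Frobenius structure. By Rizov's main theorem of complex multiplication, the $G_{K_v}$-action on $T(X_{\C})\otimes_{\Z}\Z_p$ is given through Hecke characters of $E$; at the place $\p$ lying below $v$ the relevant character is a twist of the Lubin--Tate character of $E_{\p}$, whereas for $\p'\neq\p$ the characters are unramified and their summands are unit-root, so the corresponding Tate lattices $L_{\p'}$ are $p$-unimodular. Hence $v_p(\disc L)=v_p(\disc L_{\p})$. Feeding the explicit description of the Breuil--Kisin module of the Lubin--Tate character due to Andreatta, Goren, Howard, and Madapusi Pera into the comparison, I would compute $\phi$ on $T_{\p}$ explicitly. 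The hypothesis that $\q$ does not split in $E$ means $\overline{\p}=\p$, which forces the relevant slope to equal $1$ and is the source of the supersingularity; the ensuing length computation should yield $v_p(\disc L_{\p})=2[k(\q):\F_p]$, and hence $a=[k(\q):\F_p]$.

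The hard part will be this last step: extracting the \emph{integral} lattice $L_{\p}$, not merely its isogeny class, from the Breuil--Kisin module, and showing that the discriminant is governed by the residue degree $[k(\q):\F_p]$ alone while the ramification of $\q$ over $p$ contributes nothing. This demands that one track the Hermitian pairing faithfully through the crystalline comparison and verify that the explicit Frobenius produces a discriminant group of the form $(\Z/p\Z)^{2[k(\q):\F_p]}$, in accordance with the elementary $p$-abelian structure forced by Rudakov--Shafarevich and Ogus. Reconciling the normalizations of the Lubin--Tate character, the CM type, and the Hodge--Tate weights across Rizov's theorem, the Bhatt--Morrow--Scholze comparison, and the Andreatta--Goren--Howard--Madapusi Pera computation is where most of the technical effort will lie.
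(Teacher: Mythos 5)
Your plan follows the same route as the paper in all its main ingredients: Rizov's main theorem of CM to express the Galois action on $T(X_{\C})\otimes_{\Z}\Z_p$ through Lubin--Tate characters, the Andreatta--Goren--Howard--Madapusi Pera description of the associated Breuil--Kisin modules, the Bhatt--Morrow--Scholze comparison to obtain the $F$-crystal $H^2_{\mathrm{cris}}(\mathscr{X}_{\overline{v}}/W)$ explicitly, and Ogus's identification $\Pic(\mathscr{X}_{\overline{v}})\otimes_{\Z}\Z_p\simeq H^2_{\mathrm{cris}}(\mathscr{X}_{\overline{v}}/W)^{\varphi=p}$. Where you diverge --- and where you correctly sense the difficulty --- is the last step: you propose to compute $v_p(\disc L_{\p})$ by transporting the Hermitian pairing through the crystalline comparison onto the explicit Frobenius module, and you leave both this integral compatibility and the decisive length computation as ``should yield''. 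The paper sidesteps the pairing entirely: since ${\mathrm{ch_{cris}}}\colon\Pic(\mathscr{X}_{\overline{v}})\otimes_{\Z}W\to H^2_{\mathrm{cris}}(\mathscr{X}_{\overline{v}}/W)$ is injective and preserves pairings, and the target is unimodular, the Artin invariant equals the $W$-length of $\Coker({\mathrm{ch_{cris}}})$, which by Ogus is the length of $\Coker\bigl(H^{\varphi=p}\otimes_{\Z_p}W\to H\bigr)$ --- a quantity depending only on the Frobenius structure, so one never needs to know what the pairing looks like on the explicit model. With $H\simeq W(-1)^{\oplus 22-[E_{\p}:\Q_p]}\oplus(\O_{E_{\p}}\otimes_{\Z_p}W,\beta)$ and $\beta$ having components $p\pi_{\p}$, $p\pi_{\p}^{-1}$, $p$ as dictated by the AGHM formulas, an element of the $\varphi=p$ part is determined by a single component lying in $\O_{E_{\p}}$, and the relevant cokernel is a product of factors $W_i/\pi_iW_i$ with $\pi_i=\pi_{\p}$ for exactly $d/2$ indices; each such factor has $W$-length $1$ because $W_i=\O_{E_{\p}}\otimes_{\O_{E_{\p,0}}}W$ is totally ramified over $W$ with uniformizer $\pi_{\p}$. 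This is precisely why the ramification of $\q$ over $p$ drops out and only the residue degree survives, resolving the worry in your final paragraph. Note also that the unramifiedness of $E_{\p}/F_{\q}$ (hence the evenness of $d$, which the above bookkeeping needs) is not automatic: it is a consequence of your two hypotheses and must be established first, as the paper does in Proposition \ref{local_lemma}. So the architecture is right, but as written the proposal stops short of the computation that actually produces $[k(\q):\F_p]$, and reorganizing the final step around the cokernel of the Chern class map is what makes that computation tractable.
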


The assumptions of Theorem \ref{Artin invariant} hold for all but finitely many prime numbers $p$. It is well-known that the first assumption does not always hold. For $K3$ surfaces which do not satisfy the second assumption, see Example \ref{counter}.
In Example \ref{counter}, we also show that the conclusion of Theorem \ref{Artin invariant} is not true in general.

\begin{rem}\label{elliptic curve reduction}
\rm{Theorem \ref{supersingular reduction} is an analogue of a well-known result for elliptic curves with $CM$; an elliptic curve $C$ over $\C$ which has $CM$ by an imaginary quadratic field $E$ is defined over a number field $K$, it has potential good reduction at every finite place of $K$, and the good reduction of $C$ in characteristic $p>0$ is ordinary (resp.\ supersingular) if and only if $p$ splits (resp.\ does not split) in $E$. 
(See \cite[Chapter 13, Theorem 12]{Lang} for example.)
}

\end{rem}

\begin{rem}
\rm{Charles studied the behavior of specialization at a finite place of 
the Picard groups of K3 surfaces over number fields \cite[Theorem 1]{Charles14}.
}

\end{rem}

\begin{rem}
\rm{A projective $K3$ surface $X_{\C}$ over $\C$ with Picard number $20$ has $CM$ by an imaginary quadratic field $E$; see \cite[Chapter 3, Remark 3.10]{Huybrechts}. Hence $X_{\C}$ has a model $X_K$ over a number field $K$. Shimada determined when the reduction of $X_K$ in characteristic $p$ is supersingular and proved that its Artin invariant is equal to $1$ under the assumption that $p$ does not divide $\disc \Pic(X_{\C})$ and $p\neq2$; see \cite[Proposition 5.5]{Shimada} and \cite[Theorem 1]{Shimada2}. In Proposition \ref{singular3}, we shall show that Shimada's assumptions imply the assumptions of Theorem \ref{Artin invariant}. Therefore, our results imply the results of Shimada. Our proofs and Shimada's proofs are completely different. Note that we do not exclude the case $p=2$ in this paper. By applying our results, we can prove that the results of Shimada also hold in the case $p=2$; see Theorem \ref{Shimada2} for details. We also remark that Sch${\mathrm{\overset{..}{u}}}$tt showed that if $p$ is inert in $E$, the reduction of $X_K$ in characteristic $p$ is supersingular, where $p$ is any prime number (including $p=2$); see \cite[Proposition 4.1]{Schutt}. 
}
\end{rem}

\begin{rem}
\rm{A projective $K3$ surface $X_{\C}$ over $\C$ which has an automorphism $f \in \mathrm{Aut}(X_{\C})$ such that the order of the induced map $f^{*} \in \mathrm{Aut}(T(X_{\C}))$ is $N$ with $\phi(N)=\rank_\Z{T(X_{\C})}$ has $CM$ by the cyclotomic field $\Q(\zeta_{N})$, and we have 
$\mathrm{End}_{\mathrm{Hdg}}(T(X_\C))\simeq \mathscr{O}_{\Q(\zeta_{N})}
=\Z[\zeta_{N}]$.
Here $\zeta_{N}\in\C$ is a primitive $N$-th root of unity and $\phi$ is Euler's totient function.
For such a $K3$ surface $X_{\C}$, Theorem \ref{supersingular reduction} and 
Theorem \ref{Artin invariant} were proved by Jang when $p \neq 2$; see \cite[Theorem 2.3]{Jang14}, \cite[Corollary 3.3]{Jang16}.}

\end{rem}

\begin{rem}\label{good reduction}
\rm{For a projective $K3$ surface $X_{K}$ over a number field $K \subset \C$ such that $X_{\C}:=X_{K}\otimes_{K}{\C}$ has $CM$, it is conjectured that $X_{K}$ has potential good reduction at every finite place $v$ of $K$. Let $v$ be a finite place of $K$ with residue characteristic $p$. In \cite{Matsumoto}, Matsumoto proved that $X_{K_v}$ has potential good reduction when $X_{K_v}$ satisfies at least one of the following conditions:
\begin{itemize}
\item $X_{K_v}$ admits an ample line bundle $\mathscr{L}$ with $p>(\mathscr{L})^2+4$.
\item $p\geq5$ and $X_{K_v}$ admits an elliptic fibration with a section. 
\item $p\geq5$ and the Picard number $\rho(X_{\overline{K_{v}}})$ of $X_{\overline{K_{v}}}$ satisfies $12 \leq \rho(X_{\overline{K_{v}}})\leq 20.$
\end{itemize}
For details, see \cite[Theorem 6.3]{Matsumoto} and its proof.
(See also \cite[Theorem 2.5]{Ito}.)
}
\end{rem}

Let us explain the outline of the proof of Theorem \ref{supersingular reduction} and Theorem \ref{Artin invariant}. 

We first prove Theorem \ref{supersingular reduction} using the main theorem of complex multiplication for $K3$ surfaces proved by Rizov \cite[Corollary 3.9.2]{Rizov10}. (Similar arguments can be found in Taelman's paper; see the proof of \cite[Proposition 25]{Taelman}.)
 
For the proof of Theorem \ref{Artin invariant}, we shall give an explicit form of the $F$-crystal 
\[
H^2_{\mathrm{cris}}(\mathscr{X}_{\overline{v}}/W)
\]
 under the assumptions of Theorem \ref{Artin invariant}, where $W:=W(\overline{\F}_{p})$ is the ring of Witt vectors of $\overline{k(v)}=\overline{\F}_{p}$. Then we compute the cokernel of the Chern class map 
$${\mathrm{ch_{cris}}}\colon \Pic(\mathscr{X}_{\overline{v}}){\otimes}_{\Z}{W} \rightarrow H^2_{\mathrm{cris}}(\mathscr{X}_{\overline{v}}/W)$$ 
to calculate the Artin invariant.

To describe the $F$-crystal $H^2_{\mathrm{cris}}(\mathscr{X}_{\overline{v}}/W)$ explicitly, we describe the Galois module 
$$
H^2_{\mathrm{\'et}}(X_{\overline{K_v}}, \Z_{p}),
$$
after replacing $K_{v}$ by a finite extension of it. It is a $\Z_p$-lattice in the crystalline representation $H^2_{\rm{\acute{e}t}}(X_{\overline{K_v}}, \Q_{p}).$  Here we use the main theorem of complex multiplication for $K3$ surfaces again.
According to the integral comparison theorem recently established by Bhatt, Morrow, and Scholze \cite[Theorem 1.2]{BMS0}, \cite[Theorem 14.6]{BMS}, the $F$-crystal $H^2_{\mathrm{cris}}(\mathscr{X}_{\overline{v}}/W)$ can be recovered from the Galois module $H^2_{\rm{\acute{e}t}}(X_{\overline{K_v}}, \Z_{p})$. More precisely, to calculate the $F$-crystal $H^2_{\mathrm{cris}}(\mathscr{X}_{\overline{v}}/W)$, it is enough to calculate the Breuil-Kisin module
 $$
 \mathfrak{M}(H^2_{\rm{\acute{e}t}}(X_{\overline{K_v}}, \Z_{p}))
 $$
  associated with the Galois module $H^2_{\rm{\acute{e}t}}(X_{\overline{K_v}}, \Z_{p})$.     
The main ingredient for the calculation of the Breuil-Kisin module is an explicit description of the Breuil-Kisin modules associated with Lubin-Tate characters proved by Andreatta, Goren, Howard, and Madapusi Pera \cite[Proposition 2.2.1]{AGHM}.

\begin{rem}
\rm{The Breuil-Kisin modules of $K3$ surfaces are also studied by Chiarellotto, Lazda, and Liedtke in \cite{CCL}.}
\end{rem}

The outline of this paper is as follows. In Section \ref{CMsection}, we recall the definition of complex $K3$ surfaces with $CM$ and the main theorem of complex multiplication for $K3$ surfaces due to Rizov. In Section \ref{Formal Brauer group}, we recall the definition and basic properties of the formal Brauer groups of $K3$ surfaces in characteristic $p>0$. In Section \ref{Section_proof_thm_1}, we prove Theorem \ref{supersingular reduction}. In Section \ref{The Galois module} and Section \ref{F-crystal associated with Lubin-Tate characters}, we make some preparations to prove Theorem \ref{Artin invariant}. In Section \ref{The Galois module}, we give an explicit description of the Galois module $H^2_{\rm{\acute{e}t}}(X_{\overline{K_v}}, \Z_{p})$ in terms of Lubin-Tate characters. In Section \ref{F-crystal associated with Lubin-Tate characters}, we recall the results of Andreatta, Goren, Howard, and Madapusi Pera on an explicit description of the Breuil-Kisin modules associated with Lubin-Tate characters, and calculate the Breuil-Kisin module associated with the Galois module $H^2_{\rm{\acute{e}t}}(X_{\overline{K_v}}, \Z_{p})$. Then, we give an explicit description of the $F$-crystal $H^2_{\mathrm{cris}}(\mathscr{X}_{\overline{v}}/W)$ by applying the integral comparison theorem due to Bhatt, Morrow, and Scholze. In Section \ref{Section_proof_thm2}, we prove Theorem \ref{Artin invariant} using the results of Section \ref{F-crystal associated with Lubin-Tate characters}. In Section \ref{singular K3}, we prove our results imply the results of Shimada on $K3$ surfaces with Picard number $20$. Moreover, we prove that the results of Shimada also hold in the case $p=2$. Finally, in Section \ref{examples}, we show that there are projective $K3$ surfaces over $\C$ with $CM$ which do not satisfy the assumptions and the conclusion of Theorem \ref{Artin invariant}.

\section{Complex multiplication for $K3$ surfaces}\label{CMsection}
In this section, we recall the definition of projective $K3$ surfaces over $\C$ with complex multiplication $(CM)$ and the main theorem of complex multiplication for $K3$ surfaces due to Rizov \cite{Rizov10}.

Let $X_{\C}$ be a projective $K3$ surface over $\C$. Let 
\[
T(X_{\C}):=\Pic(X_{\C})^{\perp} \subset H^{2}(X_{\C}, \Z(1))
\]
be the transcendental part of the singular cohomology, which has the $\Z$-Hodge structure coming from $H^{2}(X_{\C}, \Z(1))$. Let $$\mathrm{End}_{\mathrm{Hdg}}(T(X_{\C}))$$ be the $\Z$-algebra of $\Z$-linear endomorphisms on $T(X_{\C})$ preserving the $\Z$-Hodge structure on it. Zarhin showed that the $\Q$-algebra
\[
E(X_{\C}):=\mathrm{End}_{\mathrm{Hdg}}(T(X_{\C}))\otimes_{\Z}{\Q}
\]
is a number field which is either totally real or $CM$; see \cite[Theorem 1.5.1]{Zarhin}, \cite[Theorem 1.6(a)]{Zarhin}. Here a number field is called $CM$ if it is a totally imaginary quadratic extension of a totally real number field. Let $E$ be a $CM$ field. We say $X_{\C}$ has \textit{complex multiplication $(CM)$} by $E$ if $E(X_{\C})$ is isomorphic to $E$ and $T({X_{\C}})\otimes_{\Z}{\Q}$ is a one-dimensional $E(X_{\C})$-vector space. In this section, we assume that $X_{\C}$ has $CM$ by $E$, and we fix an isomorphism $E(X_{\C})\simeq E$. Then, the canonical morphism
\[
\epsilon_{X_{\C}}\colon E(X_{\C})\rightarrow \mathrm{End}_{\C}(H^{2, 0}(X_{\C}))\simeq \C
\]
gives an embedding $E \hookrightarrow \C$. By this embedding, we consider $E$ as a subfield of $\C$.

Let $G:=\mathrm{MT}(T({X_{\C}})\otimes_{\Z}{\Q})$ be the \textit{Mumford-Tate group} of $T({X_{\C}})\otimes_{\Z}{\Q}$; see Section \ref{examples} for details.
Since the $\Q$-Hodge structure on $T({X_{\C}})\otimes_{\Z}{\Q}$ is of weight $0$, $G$ is isomorphic to the \textit{Hodge group} of $T({X_{\C}})\otimes_{\Z}{\Q}(-1)$; see Remark \ref{MT and Hdg}.
Zarhin proved that $G$ is isomorphic to 
\[
\Ker(\mathrm{Nm}\colon \mathrm{Res}_{E/\Q}\mathbb{G}_m\rightarrow \mathrm{Res}_{F/\Q}\mathbb{G}_m)
\]
as an algebraic group over $\Q$ via $E(X_{\C}) \simeq E$, where $F$ is the maximal totally real subfield of $E$; see \cite[Theorem 2.3.1]{Zarhin}. Hence we have
\[
G(\Q)=\{\, x \in{E^{\times}} \mid xc(x)=1 \, \},
\]
where $c\colon E \rightarrow E$ is the complex conjugation. We also denote the involution on the ring $\A_{E, f}$ of finite ad$\`e$les of $E$ induced by $c$ by the same symbol $c$.

\rm{Pjatecki{\u\i}-{\v{S}}apiro and {\v{S}}afarevi{\v{c}} showed that $X_{\C}$ has a model $X_K$ over a number field $K \subset \C$ which contains $E$ \cite[Theorem 4]{Shafarevich2}. (Rizov showed that we can take such a number field $K \subset \C$ as an abelian extension of $E$ \cite[Corollary 3.9.4]{Rizov10}. However we will not use this fact in this paper.) We fix an embedding $\overline{K}\hookrightarrow \C$. We have an action of $\mathrm{Gal}(\overline{K}/K)$ on the $\'e$tale  cohomology
\[
H^2_{\rm{\acute{e}t}}(X_{\overline{K}}, \mathbb{A}_{f}{(1)}):=(\prod_{p}H^2_{\rm{\acute{e}t}}(X_{\overline{K}}, \Z_p(1)))\otimes_{\Z}{\Q},
\]
which is a free module of rank $22$ over the ring $\A_f:=(\prod_{p}\Z_p)\otimes_{\Z}{\Q}$ of finite ad$\`e$les of $\Q$. The embedding $\overline{K}\hookrightarrow \C$ gives an isomorphism
\[
H^2_{\rm{\acute{e}t}}(X_{\overline{K}}, \mathbb{A}_{f}{(1)}) \simeq H^{2}(X_{\C}, \Z(1))\otimes_{\Z}{\mathbb{A}_f}.
\]
It induces an action of $\mathrm{Gal}(\overline{K}/K)$ on $T({X_{\C}})\otimes_{\Z}\Af$. 

Now we recall the main theorem of complex multiplication for $X_{\C}$ due to Rizov \cite{Rizov10}. Let 
\[
{\mathrm{Art}_{K}}\colon\A^{\times}_{K}\rightarrow \mathrm{Gal}(K^{\mathrm{ab}}/K) 
\] 
be the Artin reciprocity map provided by global class field theory with Deligne's normalization, i.e.\ which sends uniformizers to lifts of  geometric Frobenius elements. Recall that we have $E \subset K$ as subfields of $\C$. Let 
\[
\mathrm{Nm}_{\A_K/\A_E}\colon\A^{\times}_{K} \rightarrow \A^{\times}_{E}
\]
be the norm map induced by the embedding $E \subset K$.
The main theorem of complex multiplication for $K3$ surfaces describes the action of $\mathrm{Gal}(\overline{K}/K)$ on $T({X_{\C}})\otimes_{\Z}\Af$ in terms of the norm map $\mathrm{Nm}_{\A_K/\A_E}$ and the Artin reciprocity map ${\mathrm{Art}_{K}}$.

\begin{thm}[Rizov {\cite{Rizov10}}]\label{CM main theorem}
The action of $\mathrm{Gal}(\overline{K}/K)$ on $T({X_{\C}})\otimes_{\Z}\Af$ factors through a continuous homomorphism 
\[
\rho\colon \mathrm{Gal}(K^{\mathrm{ab}}/K) \rightarrow G(\Af)=\{\, x \in{\A^{\times}_{E, f}} \mid xc(x)=1 \, \}.
\] 
Moreover, the following diagram commutes:
\[
\xymatrix{
\A^{\times}_{K}\ar[r]_-{\mathrm{Nm}_{\A_K/\A_E}}\ar[d]_{\mathrm{Art}_{K}}&\A^{\times}_{E}\ar[d]^{\mathrm{pr}}\\
\mathrm{Gal}(K^{\mathrm{ab}}/K)\ar[d]_{\rho}&\A^{\times}_{E, f}\ar[d]^{y\mapsto c(y)y^{-1}} \\
G(\Af) \ar[r] & G(\Af)/G(\Q).
}
\] 
\end{thm}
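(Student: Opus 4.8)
The plan is to establish the factorization through $G(\A_f)$ by a direct linear-algebra argument, and then to obtain the commutative diagram from the reciprocity law for special points on canonical models of Shimura varieties, transported to the $K3$ setting through the Kuga--Satake construction.

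For the first assertion, I would note that $T(X_{\C}) \otimes_{\Z} \A_f$ is free of rank one over $\A_{E,f} = E \otimes_{\Q} \A_f$, and that $\mathrm{Gal}(\overline{K}/K)$ acts $\A_{E,f}$-linearly: the endomorphisms in $E(X_{\C})$ arise from correspondences defined over $K$ (here one uses $E \subset K$), so they commute with the Galois action, and $\mathrm{Gal}(\overline{K}/K)$ therefore acts through a character valued in $\A_{E,f}^{\times}$. Since Galois preserves the cup-product pairing on $H^2(X_{\overline{K}}, \A_f(1))$ (the Tate twist renders it equivariant with trivial target) and this pairing is $c$-sesquilinear for the $E$-action, the scalar $x$ by which $\sigma$ acts satisfies $x\,c(x) = 1$, i.e. it lands in $G(\A_f) = \{\, x \in \A_{E,f}^{\times} \mid x\,c(x)=1 \,\}$; as $G(\A_f)$ is abelian the character factors through $\mathrm{Gal}(K^{\mathrm{ab}}/K)$, giving $\rho$.

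For the diagram I would realize $X_{\C}$ as a special point on a Shimura variety of orthogonal type. The primitive lattice $\Lambda := \langle \text{polarization}\rangle^{\perp} \subset H^2(X_{\C}, \Z(1))$ is a quadratic space of signature $(2,19)$, and by the theory of canonical models of $K3$ moduli the period map embeds the moduli space of polarized $K3$ surfaces into the Shimura variety attached to $G' := \mathrm{GSpin}(\Lambda \otimes \Q)$, which descends to a number field. The CM $K3$ surface gives a special point whose Mumford--Tate torus is, by Zarhin's computation, the norm-one torus $G$, and whose Hodge cocharacter $\mu$ is determined by $\epsilon_{X_{\C}} \colon E \hookrightarrow \C$. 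Since $T(X_{\C})\otimes_{\Z}\Q$ is one-dimensional over $E$ with its $H^{2,0}$-line cut out by $\epsilon_{X_{\C}}$, the cocharacter $\mu$ is defined over $E$, so the reflex field is $E$ itself, and the reciprocity morphism $r \colon \mathrm{Res}_{E/\Q}\mathbb{G}_m \to G$ attached to $(G,\mu)$ is computed to be $y \mapsto c(y)y^{-1}$, which indeed has norm one. The main theorem on canonical models (Deligne, Milne), with the geometric-Frobenius normalization of the Artin map, then gives that $\sigma \in \mathrm{Gal}(E^{\mathrm{ab}}/E)$ acts on the cohomological realization via $r(\mathrm{Art}_E^{-1}(\sigma))$ modulo $G(\Q)$. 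Precomposing with the norm $\mathrm{Nm}_{\A_K/\A_E}$ and with the inclusion $\mathrm{Gal}(K^{\mathrm{ab}}/K) \hookrightarrow \mathrm{Gal}(E^{\mathrm{ab}}/E)$ (using $E \subset K$), together with the projection $\mathrm{pr}$ onto the finite part, produces exactly the diagram in the statement.

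The hard part is the passage through the Kuga--Satake construction, since the reciprocity law is proved directly only in the Siegel (abelian-variety) case. One must verify that the Kuga--Satake abelian variety attached to $X_{\C}$ is compatible with the canonical model structure, and that the Galois action on $T(X_{\C}) \otimes_{\Z} \A_f$ is faithfully recovered from the half-spin representation realizing the $K3$ local system inside the Kuga--Satake family. Concretely, one must match the $\mathrm{GSpin}$-reciprocity law with the induced action on $H^2$ and check that no spurious factor from the spinor norm or the similitude character disturbs the clean norm-one-torus description. This compatibility bookkeeping, rather than any single delicate estimate, is where the genuine work lies.
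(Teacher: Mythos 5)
The paper itself does not prove this theorem: its ``proof'' is the citation to Rizov \cite[Corollary 3.9.2]{Rizov10} (see also \cite[Theorem 12]{Taelman}), and your outline is in substance the strategy of those references, so there is no divergence of approach to report. Your first paragraph is essentially right and complete: freeness of $T(X_{\C})\otimes_{\Z}\A_f$ over $\A_{E,f}$, $E$-linearity of the Galois action, and the $c$-sesquilinearity of the cup product force the action through a character valued in $G(\A_f)$, which factors through $\mathrm{Gal}(K^{\mathrm{ab}}/K)$ because $G(\A_f)$ is abelian. One caveat even there: to know that Galois commutes with the $E$-action on \'etale cohomology you need the Hodge endomorphisms of $T(X_{\C})$ to be realized by algebraic correspondences (or at least absolute Hodge classes) defined over $K$; the algebraicity is itself a nontrivial theorem, and definability over $K$ rather than a finite extension has to be argued, not merely asserted from $E\subset K$.

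The genuine gap is the one you name yourself, and it is not mere bookkeeping: the commutativity of the diagram \emph{is} the statement that the period point of $X_{\C}$ is a special point of the canonical model of the orthogonal (or $\mathrm{GSpin}$) Shimura variety over the reflex field $E$, and that the Galois action on the \'etale realization of $T(X_{\C})$ is computed by the reciprocity law of that canonical model. Establishing this requires (i) that the period morphism for polarized $K3$ surfaces is defined over the reflex field and compatible with the canonical model structure, and (ii) that the Kuga--Satake construction transports the abelian-variety reciprocity law to $H^2$ of the $K3$ without a spinor-norm or similitude twist. These two points are precisely the content of Rizov's Corollary 3.9.2 and the results leading to it, so deferring them leaves a road map rather than a proof. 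I would also insist that you pin down the sign of the reciprocity cocharacter ($y\mapsto c(y)y^{-1}$ versus $y\mapsto yc(y)^{-1}$), which depends on Deligne's normalization of $\mathrm{Art}_{K}$; the computation of Frobenius eigenvalues in Theorem \ref{supersingular reduction} downstream is sensitive to exactly this choice.
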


\begin{proof}
See \cite[Corollary 3.9.2]{Rizov10}. See also \cite[Theorem 12]{Taelman}. 
\end{proof}

We fix a prime number $p$ and a finite place $\p$ of $E$ whose residue characteristic is $p$. Let $\q$ be the finite place of $F$ below $\p$. In the rest of this section, we shall show that the extension $E_{\p}/F_{\q}$ is unramified under the assumptions of Theorem \ref{Artin invariant}. We first make some preparations.

\begin{lem}\label{quadratic space}
We assume that the isomorphism $E(X_{\C}) \simeq E$ induces an isomorphism
\[
\mathrm{End}_{\mathrm{Hdg}}(T(X_{\C}))\otimes_{\Z}{\Z_{(p)}}\simeq \mathscr{O}_{E}\otimes_{\Z}{\Z_{(p)}}. 
\]
Then $T(X_{\C})\otimes_{\Z}{\Z_{(p)}}$ is a free $\mathscr{O}_{E}\otimes_{\Z}{\Z_{(p)}}$-module of rank $1$. Moreover, if we fix an isomorphism of $\mathscr{O}_{E}\otimes_{\Z}{\Z_{(p)}}$-modules
\[
T(X_{\C})\otimes_{\Z}{\Z_{(p)}}\simeq\mathscr{O}_{E}\otimes_{\Z}{\Z_{(p)}}, 
\]
there exists an element $d \in F^{\times}$ such that the intersection product on $T(X_{\C})\otimes_{\Z}{\Z_{(p)}}$ is translated to the pairing 
\[
\xymatrix{
\langle\ ,\ \rangle\colon\mathscr{O}_{E}\otimes_{\Z}{\Z_{(p)}}\times\mathscr{O}_{E}\otimes_{\Z}{\Z_{(p)}} \ar[r]& \Z_{(p)}}
\]
defined by 
$
\langle x, y \rangle=\mathrm{Tr}_{E/\Q}(dxc(y))
$
for every $x, y \in \mathscr{O}_{E}\otimes_{\Z}{\Z_{(p)}}$.
\end{lem}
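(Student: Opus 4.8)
The plan is to handle the two assertions separately: the freeness statement is pure commutative algebra over a principal ideal domain, while the shape of the intersection product follows from a trace-form computation once one knows that the involution of $E$ induced by the form is the complex conjugation $c$. First I would set $R:=\mathscr{O}_{E}\otimes_{\Z}\Z_{(p)}$ and $M:=T(X_{\C})\otimes_{\Z}\Z_{(p)}$. Since $\mathscr{O}_{E}$ is a Dedekind domain, $R$ is the semilocalization of $\mathscr{O}_{E}$ whose nonzero primes are exactly the finitely many primes above $p$; a semilocal Dedekind domain is a principal ideal domain, so $R$ is a PID. By the hypothesis of the lemma, $R\simeq\mathrm{End}_{\mathrm{Hdg}}(T(X_{\C}))\otimes_{\Z}\Z_{(p)}$ acts on $M$. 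The module $M$ is finitely generated over $R$ (it is already finitely generated over $\Z_{(p)}$) and torsion-free over $R$, since it embeds into $M\otimes_{\Z_{(p)}}\Q=T(X_{\C})\otimes_{\Z}\Q$, a one-dimensional $E$-vector space. A finitely generated torsion-free module over a PID is free, of rank $\dim_{E}(M\otimes_{R}E)=\dim_{E}(T(X_{\C})\otimes_{\Z}\Q)=1$. This gives the first assertion.

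For the second assertion I would fix an $R$-basis $e$ of $M$; this is precisely the choice of isomorphism $M\simeq R$, and after inverting $p$ it identifies $T(X_{\C})\otimes_{\Z}\Q$ with $E$ as $E$-vector spaces via $e\mapsto 1$. Let $\psi$ denote the intersection form, regarded as a nondegenerate symmetric $\Q$-bilinear form on $E$. The one non-formal input is the following adjunction relation: the adjoint of multiplication by $a\in E$ with respect to $\psi$ is multiplication by $c(a)$, i.e.
\[
\psi(ax,y)=\psi(x,c(a)y)\qquad(a,x,y\in E).
\]
I would justify this from the fact that, $\psi$ being a polarization of the Hodge structure on $T(X_{\C})\otimes_{\Q}$, the induced involution on $E=E(X_{\C})$ is positive, and on a $CM$ field the unique positive involution is the complex conjugation $c$; equivalently, this is consistent with Zarhin's identification, recalled above, of the Hodge group (which preserves $\psi$) with the norm-one torus $\Ker(\mathrm{Nm}\colon\mathrm{Res}_{E/\Q}\mathbb{G}_m\rightarrow\mathrm{Res}_{F/\Q}\mathbb{G}_m)$.

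Granting the adjunction relation, the rest is formal. Since the trace form $(x,y)\mapsto\mathrm{Tr}_{E/\Q}(xy)$ on $E$ is nondegenerate, the $\Q$-linear functional $z\mapsto\psi(z,1)$ equals $z\mapsto\mathrm{Tr}_{E/\Q}(dz)$ for a unique $d\in E$. Taking $a=c(y)$ in the adjunction relation gives $\psi(x,y)=\psi(c(y)x,1)=\mathrm{Tr}_{E/\Q}(d\,x\,c(y))$. Using that $\mathrm{Tr}_{E/\Q}$ is invariant under $c$, the symmetry $\psi(x,y)=\psi(y,x)$ becomes $\mathrm{Tr}_{E/\Q}\big((d-c(d))z\big)=0$ for all $z\in E$, whence $d=c(d)$, i.e.\ $d\in F$; nondegeneracy of $\psi$ forces $d\neq 0$, so $d\in F^{\times}$. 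Finally, the values $\langle x,y\rangle=\mathrm{Tr}_{E/\Q}(d\,x\,c(y))$ for $x,y\in R$ automatically lie in $\Z_{(p)}$, because under $M\simeq R$ this pairing is by construction the restriction of the $\Z_{(p)}$-valued intersection product. I expect the only real obstacle to be pinning down the adjunction relation $\psi(ax,y)=\psi(x,c(a)y)$ cleanly from the cited theory of polarized Hodge structures; the commutative-algebra and trace-form steps are routine.
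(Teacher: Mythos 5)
Your proof is correct and takes essentially the same route as the paper: the freeness assertion is proved there by exactly your PID argument, and for the second assertion the paper simply cites the standard trace-form computation (Chen's thesis, Proposition 1.3.10), which is what you have written out in full. The one input you flag as the "real obstacle" --- the adjunction $\psi(ax,y)=\psi(x,c(a)y)$ --- is precisely the content of Zarhin's theorem on the adjoint involution of $E(X_{\C})$, which the paper already invokes in Section 2, so there is no gap.
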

\begin{proof}
Since $\mathscr{O}_{E}\otimes_{\Z}{\Z_{(p)}}$ is a principal ideal domain and $T({X_{\C}})\otimes_{\Z}{\Q}$ is a one-dimensional $E$-vector space, it follows that $T(X_{\C})\otimes_{\Z}{\Z_{(p)}}$ is a free $\mathscr{O}_{E}\otimes_{\Z}{\Z_{(p)}}$-module of rank $1$. The second assertion is well-known; see the proof of \cite[Proposition 1.3.10]{Chen} for example. 
\end{proof}

\begin{lem}\label{norm}
Let $L_1$ be a field which is a finite extension of the field $\Q_2$ of $2$-adic numbers. Let $L_2$ be a totally ramified quadratic extension of $L_1$. Then $\O_{L_1}$ is generated by $\mathrm{Nm}_{L_2/L_1}(\O_{L_2})$ as a $\Z_2$-module.
\end{lem}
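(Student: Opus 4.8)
The plan is to reduce the statement modulo $2$ via Nakayama's lemma and then exploit the fact that squaring is a bijection on a finite field of characteristic $2$. I would write $\mathscr{O}_{L_1}$ as a free $\Z_2$-module of rank $[L_1:\Q_2]$, let $A \subseteq \mathscr{O}_{L_1}$ be the $\Z_2$-submodule generated by $\mathrm{Nm}_{L_2/L_1}(\mathscr{O}_{L_2})$, and let $\mathfrak{m}_1$ be the maximal ideal of $\mathscr{O}_{L_1}$ with residue field $k_1$. Since $2\mathscr{O}_{L_1} = \mathfrak{m}_1^{e_1}$ where $e_1$ is the ramification index of $L_1/\Q_2$, and $\mathscr{O}_{L_1}$ is finitely generated over the local ring $\Z_2$, Nakayama's lemma reduces the claim to showing that $A$ surjects onto $R := \mathscr{O}_{L_1}/2\mathscr{O}_{L_1} = \mathscr{O}_{L_1}/\mathfrak{m}_1^{e_1}$. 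Note that $2R = 0$, so the image of $A$ is an $\mathbb{F}_2$-subspace of $R$.

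Next I would record two facts about the norm on the totally ramified quadratic extension $L_2/L_1$. First, for a uniformizer $\pi_2$ of $L_2$, the element $\pi_1 := \mathrm{Nm}_{L_2/L_1}(\pi_2)$ is a uniformizer of $L_1$, since the residue degree is $1$ and hence $v_{L_1}\circ\mathrm{Nm}_{L_2/L_1} = v_{L_2}$ on $L_2^{\times}$. Second, and this is the crucial point, for a unit $u \in \mathscr{O}_{L_2}^{\times}$ one has $\overline{\mathrm{Nm}_{L_2/L_1}(u)} = \bar{u}^2$ in $k_1 = k_2$: the extension is Galois (being quadratic in characteristic $0$) and totally ramified, so the nontrivial automorphism $\sigma$ fixes the residue field and $\mathrm{Nm}_{L_2/L_1}(u) = u\,\sigma(u)$ reduces to $\bar{u}^2$. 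Because $k_1$ has characteristic $2$, the Frobenius $x \mapsto x^2$ is a bijection of $k_1^{\times}$, so as $u$ runs over $\mathscr{O}_{L_2}^{\times}$ the reductions $\overline{\mathrm{Nm}_{L_2/L_1}(u)}$ run over all of $k_1^{\times}$.

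With these in hand I would run a downward induction along the valuation filtration $F^j := \mathfrak{m}_1^j R$ of $R$ (with $F^{e_1} = 0$). For $0 \le j < e_1$ the norms $\mathrm{Nm}_{L_2/L_1}(\pi_2^{\,j} u) = \pi_1^{\,j}\,\mathrm{Nm}_{L_2/L_1}(u)$ lie in $\mathscr{O}_{L_2}$, so their reductions lie in the image of $A$, are contained in $F^j$, and by the second fact their images in $F^j/F^{j+1} \cong k_1$ realize every element of $k_1^{\times}$ and therefore $\mathbb{F}_2$-span $k_1$. A standard filtered linear-algebra argument then applies: if an $\mathbb{F}_2$-subspace $B \subseteq R$ satisfies that $B \cap F^j$ surjects onto $F^j/F^{j+1}$ for every $j$, then $B = R$ (prove $F^j \subseteq B$ by descending induction, writing any $x \in F^j$ as $b + (x-b)$ with $b \in B \cap F^j$ and $x - b \in F^{j+1} \subseteq B$). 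Applying this to $B = $ image of $A$ in $R$ shows that $A$ surjects onto $R$, and combined with the first paragraph this gives $A = \mathscr{O}_{L_1}$.

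The main obstacle, which is really the entire content of the lemma, is the characteristic-$2$ phenomenon isolated in the second paragraph: it is exactly the bijectivity of squaring on $k_1^{\times}$ that lets the norms hit all possible leading coefficients rather than only the squares, and this is what makes the hypothesis on $\Q_2$ essential. The remaining ingredients, namely the valuation computation showing $\pi_1$ is a uniformizer and the filtered induction, are routine once this point is in place.
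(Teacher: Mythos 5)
Your proof is correct and follows essentially the same route as the paper's: both rest on the two facts that $\mathrm{Nm}_{L_2/L_1}(\pi_{L_2})$ is a uniformizer of $L_1$ and that the norm induces the squaring map on the characteristic-$2$ residue field (surjective since Frobenius is bijective there), propagated up the valuation filtration via multiplicativity of the norm. The only difference is bookkeeping: the paper reduces to the residue field by noting the generated submodule is closed and that $\pi_{L_1}$ is itself a norm, whereas you use Nakayama's lemma plus an explicit filtered induction on $\O_{L_1}/2\O_{L_1}$ --- the content is identical.
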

\begin{proof}
Let $\pi_{L_2}$ be a uniformizer of $L_2$. We put 
$\pi_{L_1} := \mathrm{Nm}_{L_2/L_1}(\pi_{L_2})$. 
Since the extension $L_2/L_1$ is totally ramified, the element $\pi_{L_1}$ is a uniformizer of $L_1$.

Since the sub-$\Z_2$-module of $\O_{L_1}$ generated by $\mathrm{Nm}_{L_2/L_1}(\O_{L_2})$ is closed in $\O_{L_1}$, it suffices to show $\O_{L_1}/\pi_{L_1}^{n}\O_{L_1}$ is generated by the image of $\mathrm{Nm}_{L_2/L_1}(\O_{L_2})$ as a sub-$\Z_2$-module for every $n \geq 1$. Since $\pi_{L_1} \in \mathrm{Nm}_{L_2/L_1}(\O_{L_2})$ and $\mathrm{Nm}_{L_2/L_1}(\O_{L_2})$ is closed under multiplication, it suffices to show $\O_{L_1}/\pi_{L_1}\O_{L_1}$ is generated by the image of $\mathrm{Nm}_{L_2/L_1}(\O_{L_2})$ as a sub-$\Z_2$-module. 

We put $k:=\O_{L_1}/\pi_{L_1}\O_{L_1}$. Since the extension $L_2/L_1$ is totally ramified, the image of $\mathrm{Nm}_{L_2/L_1}(\O_{L_2})$ in $k$ coincides with the image of the Frobenius map $k \rightarrow k, x \mapsto x^2$. Since $k$ is a finite field of characteristic $2$, we see that the image of $\mathrm{Nm}_{L_2/L_1}(\O_{L_2})$ in $k$ is $k$. 

The proof of Lemma \ref{norm} is complete.
\end{proof}

We conclude this section by proving the following proposition.
\begin{prop}\label{local_lemma}
Under the assumptions of Theorem \ref{Artin invariant}, the quadratic extension $E_{\p}/F_{\q}$ is unramified.
\end{prop}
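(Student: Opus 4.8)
The plan is to argue by contradiction. Since $\q$ does not split, $E_\p/F_\q$ is quadratic and is either inert—hence unramified—or ramified, so it suffices to rule out the totally ramified case. Throughout I would work after localizing, identifying $T(X_\C)\otimes_\Z\Z_{(p)}$ via Lemma \ref{quadratic space} with $\mathscr{O}_E\otimes_\Z\Z_{(p)}$ equipped with the pairing $\langle x,y\rangle=\mathrm{Tr}_{E/\Q}(dxc(y))$ for some $d\in F^\times$, and then pass to the completion at $p$, where $\mathscr{O}_E\otimes_\Z\Z_p=\prod_{\p'\mid p}\mathscr{O}_{E_{\p'}}$ decomposes the form orthogonally.

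First I would extract two integral constraints from the hypotheses of Theorem \ref{Artin invariant}. Since $H^2(X_\C,\Z(1))$ is an even unimodular lattice and $\Pic(X_\C)$, $T(X_\C)$ are mutually orthogonal complements in it, their discriminants agree up to sign; hence the assumption $p\nmid\disc\Pic(X_\C)$ forces $T(X_\C)\otimes_\Z\Z_{(p)}$ to be unimodular, while $T(X_\C)$, being a sublattice of an even lattice, is even. These properties descend to each orthogonal summand $\mathscr{O}_{E_\p}$. Computing the dual of $\langle\ ,\ \rangle$ against the standard trace pairing (using $c(d)=d$ and the Galois-stability of the different), unimodularity at $\p$ becomes $d\,\mathscr{O}_{E_\p}=\mathfrak{d}_{E_\p/\Q_p}^{-1}$, i.e. $v_\p(d)=-v_\p(\mathfrak{d}_{E_\p/\Q_p})$. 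As $d\in F_\q$, the left side equals $e(E_\p/F_\q)\,v_\q(d)$, which in the ramified case is even; by the tower formula for the different this forces the exponent $v_\p(\mathfrak{d}_{E_\p/F_\q})$ to be even.

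For odd $p$ this already yields a contradiction, since a ramified quadratic extension is tamely ramified and so $v_\p(\mathfrak{d}_{E_\p/F_\q})=1$ is odd. The remaining and genuinely harder case is $p=2$, where wild ramification permits an even different exponent; here I would instead exploit evenness. Writing $\mathrm{Nm}=\mathrm{Nm}_{E_\p/F_\q}$, one computes $\langle x,x\rangle=2\,\mathrm{Tr}_{F_\q/\Q_2}(d\,\mathrm{Nm}(x))$ for $x\in\mathscr{O}_{E_\p}$. Feeding the unimodularity relation into the tower formula for differents, together with the standard bound on the different exponent of a wildly ramified quadratic extension, I would show $\mathrm{Tr}_{F_\q/\Q_2}(d\,\mathscr{O}_{F_\q})=\tfrac12\Z_2$, so that the $\Z_2$-linear functional $z\mapsto 2\,\mathrm{Tr}_{F_\q/\Q_2}(dz)$ attains an odd value on $\mathscr{O}_{F_\q}$.

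The key point, and where Lemma \ref{norm} enters, is to promote this to an odd value of $\langle x,x\rangle$. Since $E_\p/F_\q$ is totally ramified over $\Q_2$, Lemma \ref{norm} shows that the norms $\mathrm{Nm}(x)$ for $x\in\mathscr{O}_{E_\p}$ generate $\mathscr{O}_{F_\q}$ as a $\Z_2$-module, so by $\Z_2$-linearity some $x\in\mathscr{O}_{E_\p}$ has $\langle x,x\rangle$ an odd $2$-adic integer. This contradicts the evenness of $T(X_\C)$ established above, completing the ramified case. I expect the main obstacle to be precisely this $p=2$ analysis: tracking the different exponents through the tower $E_\p/F_\q/\Q_2$ to determine $\mathrm{Tr}_{F_\q/\Q_2}(d\,\mathscr{O}_{F_\q})$ exactly, and then combining it with the surjectivity of the norm from Lemma \ref{norm}, whereas the odd primes are disposed of by the parity of the different alone.
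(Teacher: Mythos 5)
Your proposal is correct and follows essentially the same route as the paper: identify $T(X_\C)\otimes_\Z\Z_{(p)}$ with $\mathscr{O}_E\otimes_\Z\Z_{(p)}$ via Lemma \ref{quadratic space}, use $p\nmid\disc\Pic(X_\C)$ to make the trace pairing perfect so that $d^{-1}$ generates $\mathcal{D}_{E_\p/\Q_p}$, kill the odd-$p$ ramified case by the parity of the different exponent via tameness, and handle $p=2$ with the evenness of the lattice together with Lemma \ref{norm}. The only (harmless) difference is that for $p=2$ you run the argument in the contrapositive direction, pinning down $\mathrm{Tr}_{F_\q/\Q_2}(d\,\mathscr{O}_{F_\q})=\tfrac12\Z_2$ exactly via the upper bound on the wild different, whereas the paper only needs the containment $\mathcal{D}_{F_\q/\Q_2}\subset d^{-1}\mathscr{O}_{F_\q}$ deduced from evenness and then contradicts $\mathcal{D}_{E_\p/F_\q}\subsetneq\mathscr{O}_{E_\p}$ in the tower formula.
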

\begin{proof}
By Lemma \ref{quadratic space}, we may identify $T(X_{\C})\otimes_{\Z}{\Z_{(p)}}$ with $\mathscr{O}_{E}\otimes_{\Z}{\Z_{(p)}}$, where $\mathscr{O}_{E}\otimes_{\Z}{\Z_{(p)}}$ is equipped with a pairing $\langle\ ,\ \rangle$ defined by 
$
\langle x, y\rangle=\mathrm{Tr}_{E/\Q}(dxc(y))
$
for every $x, y \in \mathscr{O}_{E}\otimes_{\Z}{\Z_{(p)}}$ for some element $d \in F^{\times}$. By the first assumption of Theorem \ref{Artin invariant}, the discriminant of $\Pic(X_{\C})$ is not divisible by $p$. Hence the pairing $\langle\ ,\ \rangle$ is perfect. There is a decomposition \[
\mathscr{O}_{E}\otimes_{\Z}{\Z_p} \simeq \prod_{\p'|p}\mathscr{O}_{E_{\p'}},
\]
where $\p'$ is a finite place of $E$ above $p$. Using this decomposition, we have a perfect pairing
\[
\xymatrix{
\mathscr{O}_{E_{\p}}\times\mathscr{O}_{E_{\p}} \ar[r]& \Z_{p}}
\]
\[
\xymatrix{(x, y) \ar@{|->}[r]  & \mathrm{Tr}_{E_{\p}/\Q_p}(dxc(y)),
}
\]
where $c \colon E_{\p} \simeq E_{\p}$ is the involution induced by $c$ on $E$. We denote this pairing by $\langle\ ,\ \rangle_{\p}$.

Let $\mathcal{D}_{E_{\p}/\Q_p}$ be the different of $E_{\p}$ over $\Q_p$. Since $\langle\ ,\ \rangle_{\p}$ is a perfect pairing, we see that $\mathcal{D}_{E_{\p}/\Q_p}$ is generated by $d^{-1}$ as an ideal of $\O_{E_{\p}}$.

Suppose that $p>2$. Let $\nu_{\p}\colon E_{\p}^{\times} \rightarrow \Z$ be the valuation normalized by $\nu_{\p}(\pi_{\p})=1$, where $\pi_{\p}$ is a uniformizer of $E_{\p}$. Let $\mathcal{D}_{E_{\p}/F_{\q}}$ be the different of $E_{\p}$ over $F_{\q}$. We assume that $E_{\p}/F_{\q}$ is totally ramified. Then, since $p>2$, a generator $e\in \O_{E_{\p}}$ of $\mathcal{D}_{E_{\p}/F_{\q}}$ satisfies $\nu_{\p}(e)=1$ by \cite[Chapter 3, Proposition 13]{Serre2}. By \cite[Chapter 3, Proposition 8]{Serre2}, we have the following equality of ideals of $\O_{E_{\p}}$ 
\[
\mathcal{D}_{E_{\p}/\Q_p}=\mathcal{D}_{E_{\p}/F_{\q}}\mathcal{D}_{F_{\q}/\Q_p},
\]
where $\mathcal{D}_{F_{\q}/\Q_p}$ is the different of $F_{\q}$ over $\Q_p$. 
Let $f \in \O_{F_{\q}}$ be a generator of $\mathcal{D}_{F_{\q}/\Q_p}$. Then we have
\[
\nu_{\p}(d^{-1})=\nu_{\p}(e)+\nu_{\p}(f)=1+\nu_{\p}(f).
\]
Since $d^{-1}, f \in \O_{F_{\q}}$ and $E_{\p}/F_{\q}$ is totally ramified, the integers $\nu_{\p}(d^{-1})$ and $\nu_{\p}(f)$ are even. This is absurd. Therefore $E_{\p}/F_{\q}$ is unramified.

Suppose that $p=2$. We note that the pairing $\langle\ ,\ \rangle_{\p}$ is even, i.e.\ $\langle x, x \rangle_{\p}\in 2\Z_2$ for every $x \in \mathscr{O}_{E_{\p}}$, since the transcendental lattice $T(X_{\C})$ is an even lattice; see \cite[Chapter 1, Proposition 3.5]{Huybrechts} for example. 
We have 
\begin{align*}
\langle x, x \rangle_{\p}&=\mathrm{Tr}_{E_{\p}/\Q_2}(dxc(x))\\
&=\mathrm{Tr}_{E_{\p}/\Q_2}(d\mathrm{Nm}_{E_{\p}/F_{\q}}(x))\\
&=2\mathrm{Tr}_{F_{\q}/\Q_2}(d\mathrm{Nm}_{E_{\p}/F_{\q}}(x)).
\end{align*}
Hence we have
\[
\mathrm{Tr}_{F_{\q}/\Q_2}(d\mathrm{Nm}_{E_{\p}/F_{\q}}(x)) \in \Z_2
\]
for every $x \in \mathscr{O}_{E_{\p}}$.
We assume that $E_{\p}/F_{\q}$ is totally ramified. By Lemma \ref{norm}, we see that $\mathrm{Tr}_{F_{\q}/\Q_2}(dx) \in \Z_2$ for every $x \in \mathscr{O}_{F_{\q}}$. 
Hence we have $\mathcal{D}_{F_{\q}/\Q_2}\subset d^{-1}\mathscr{O}_{F_{\q}}$. Consequently, we have
\[
\mathcal{D}_{F_{\q}/\Q_2}\O_{E_{\p}}\subset d^{-1}\mathscr{O}_{E_{\p}}=\mathcal{D}_{E_{\p}/\Q_2}.
\] 
Since $E_{\p}/F_{\q}$ is totally ramified, we have $\mathcal{D}_{E_{\p}/F_{\q}} \subsetneq \O_{E_{\p}}$. Therefore we have 
\begin{align*}
\mathcal{D}_{E_{\p}/\Q_2}&=\mathcal{D}_{E_{\p}/F_{\q}}\mathcal{D}_{F_{\q}/\Q_2}\\
&\subsetneq \mathcal{D}_{F_{\q}/\Q_2}\O_{E_{\p}}.
\end{align*}
This is absurd. Therefore $E_{\p}/F_{\q}$ is unramified.

The proof of Proposition \ref{local_lemma} is complete.
\end{proof}

\section{Formal Brauer groups of $K3$ surfaces}\label{Formal Brauer group}
In this section, we recall the definition and basic properties of the formal Brauer groups of $K3$ surfaces in characteristic $p>0$.

Let $X_k$ be a $K3$ surface over an algebraically closed field $k$ of characteristic $p>0$. Let 
\[ 
\xymatrix{
 \Phi^2_{X_{k}}\colon {\mathrm{Art}_k}   \ar[r]  &{\text{(Abelian  groups)}}}
 \]
 be the functor defined by 
 \[
 \xymatrix{{R}      \ar@{|->}[r]  &\ker({H^2_{\rm{\acute{e}t}}(X_k\otimes_k{R},\mathbb{G}_m)}\rightarrow{H^2_{\rm{\acute{e}t}}(X_k,\mathbb{G}_m)}),
}
\]
where $\mathrm{Art}_k$ is the category of local artinian $k$-algebras with residue field $k$, and (Abelian  groups) is the category of abelian groups.
The functor $\Phi^2_{X_{k}}$ is pro-representable by a smooth one-dimensional formal group scheme $\hatBr(X_k)$ over $k$ \cite[Chapter II, Corollary 2.12]{Artin-Mazur}. The \textit{height} of the formal Brauer group of $X_k$ is defined to be the height of $\hatBr(X_k)$
\[
h(X_k):=h({\hatBr(X_k)}).
\]
The Picard group $\Pic(X_k)$ is a free $\Z$-module of rank $\leq 22$. The rank 
\[
\rho(X_k):=\rank_{\Z}{\Pic(X_k)}
\]
 is called the \textit{Picard number} of $X_k$. Since $X_k$ is a projective surface, we have $\rho(X_k)\geq1$. When ${h(X_k)}=\infty$, we say $X_k$ is \textit{supersingular}. The Tate conjecture for $K3$ surfaces \cite{Charles13}, \cite{Kim-Madapusi}, \cite{Madapusi}, \cite{Maulik} implies that $X_k$ is supersingular if and only if $\rho(X_k)=22$. (See also \cite[Corollaire 0.5]{Benoist}, \cite[Corollary 17.3.7]{Huybrechts}.) If $h(X_k)\neq\infty$, the inequality 
 \[
 1 \leq \rho(X_k) \leq 22-2h(X_k)
 \] is satisfied \cite[Theorem 0.1]{Artin74}. Therefore, if $h(X_k)\neq\infty$, we have $1\leq{h(X_k)}\leq10$. 

In the rest of this section, we fix a power $q$ of $p$. Let $\F_q$ be the finite field with $q$ elements. Let $X_{\F_q}$ be a $K3$ surface over $\F_q$, and we put $X_{\overline{\F}_q}:=X_{\F_q}\otimes_{\F_q}{\overline{\F}_q}$. The geometric Frobenius element $\sigma \in \mathrm{Gal}(\overline{\F}_q/\F_q)$ acts on the $\'e$tale cohomology
\[
H^2_{\rm{\acute{e}t}}(X_{\overline{\F}_q}, \Q_{\ell}{(1)})
\]
for a prime number $\ell \neq p$. The characteristic polynomial $P(T)$ of $\sigma$ acting on $H^2_{\rm{\acute{e}t}}(X_{\overline{\F}_q}, \Q_{\ell}{(1)})
$ is a monic polynomial of degree $22$ with coefficients in $\Q$. It is independent of the choice of $\ell \neq p$. We consider the roots of $P(T)$ 
\[
\alpha_1, \alpha_2, \dotsc, \alpha_i, \dotsc, \alpha_{22}
\]
as elements of $\overline{\Q}$. 
We fix an embedding $\overline{\Q} \hookrightarrow \overline{\Q}_p$, and let $\nu\colon \overline{\Q}_p\to \Q\cup\lbrace\infty\rbrace$ be the $p$-adic valuation normalized by $\nu(p)=1$. 
The following results are well-known.

\begin{prop}\label{height-Frobenius}
\begin{enumerate}
\item[(1)] The number of $i$ such that 
$\alpha_i$ is a root of unity is equal to $\rho(X_{\overline{\F}_q})$.
\item[(2)] If $h(X_{\overline{\F}_q})<\infty$, then the number of $i$ such that $\nu(\alpha_i)>0$ is equal to $h(X_{\overline{\F}_q})$.
\item[(3)] If $h(X_{\overline{\F}_q})=\infty$, 
then every $\alpha_i$ is a root of unity.
Hence it satisfies $\nu(\alpha_i)=0$. 
\end{enumerate}
\end{prop}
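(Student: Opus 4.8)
The plan is to read each of the three statements off the cohomology theory best suited to it: statement (1) from $\ell$-adic cohomology together with the Tate conjecture, and statements (2) and (3) from crystalline cohomology together with the Artin--Mazur description of the formal Brauer group. Throughout I would keep the Tate twist explicit. Writing $q=p^a$ and letting $\beta_i=q\alpha_i$ denote the eigenvalues of $\sigma$ on the untwisted module $H^2_{\mathrm{\acute{e}t}}(X_{\overline{\F}_q},\Q_\ell)$, Deligne's theorem gives $|\iota(\beta_i)|=q$ for every complex embedding $\iota$, so that $|\iota(\alpha_i)|=1$. Cup product furnishes a perfect pairing $H^2\times H^2\to\Q_\ell(-2)$ on which $\sigma$ acts by $q^2$; hence the multiset $\{\beta_i\}$ is stable under $\beta\mapsto q^2/\beta$, equivalently $\{\alpha_i\}$ is stable under $\alpha\mapsto\alpha^{-1}$. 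In particular $\nu(\alpha_i)+\nu(\alpha_j)=0$ for paired indices, so $\#\{i:\nu(\alpha_i)>0\}=\#\{i:\nu(\alpha_i)<0\}$. This symmetry is what will let me convert a crystalline slope-$<1$ count into the slope-$>1$ count appearing in (2).

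For (1), I would invoke the Tate conjecture for $K3$ surfaces over finite fields (a theorem, cited in Section \ref{Formal Brauer group}) together with the semisimplicity of the Frobenius action, both known in this setting. The cycle class map embeds $\Pic(X_{\overline{\F}_q})\otimes_{\Z}\Q_\ell$ into $H^2_{\mathrm{\acute{e}t}}(X_{\overline{\F}_q},\Q_\ell(1))$ with image the span of the Tate classes, i.e.\ the subspace on which some power of $\sigma$ acts as the identity. A generalized eigenspace lies in this subspace exactly when its eigenvalue $\alpha_i$ is a root of unity; by semisimplicity the generalized eigenspaces are genuine eigenspaces, so the dimension of the Tate subspace equals the number of $\alpha_i$ that are roots of unity, counted with multiplicity. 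Since that dimension is $\rho(X_{\overline{\F}_q})$, this proves (1).

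For (2), I would pass to crystalline cohomology over $W(\F_q)$. By the theorem of Katz--Messing the characteristic polynomial of the $q$-power crystalline Frobenius on $H^2_{\mathrm{cris}}(X_{\F_q}/W(\F_q))[1/p]$ coincides with $\prod_i(T-\beta_i)$; consequently the slope of $\beta_i$ in the Newton polygon equals $\nu(\beta_i)/a=1+\nu(\alpha_i)/a$, which is $<1$, $=1$, or $>1$ according as $\nu(\alpha_i)$ is $<0$, $=0$, or $>0$. On the other hand, by Artin--Mazur \cite{Artin-Mazur} the Cartier--Dieudonn$\'e$ module of $\hatBr(X_{\overline{\F}_q})$ is canonically the slope-$[0,1)$ part of $H^2_{\mathrm{cris}}$, which when $h:=h(X_{\overline{\F}_q})$ is finite is free of rank $h$; hence $\#\{i:\nu(\alpha_i)<0\}=h$. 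Combining this with the duality symmetry of the first paragraph gives $\#\{i:\nu(\alpha_i)>0\}=h$, which is (2).

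Finally, for (3) I would combine (1) with the characterization of supersingularity recalled in Section \ref{Formal Brauer group}: if $h(X_{\overline{\F}_q})=\infty$ then the Tate conjecture gives $\rho(X_{\overline{\F}_q})=22$, so by (1) all twenty-two $\alpha_i$ are roots of unity and in particular $\nu(\alpha_i)=0$. All of the individual inputs are standard, so I expect no computational difficulty; the one place needing genuine care, and which I regard as the crux, is the bookkeeping that identifies the Artin--Mazur slope-$<1$ subspace with $\{i:\nu(\alpha_i)<0\}$ and only then transports it to $\{i:\nu(\alpha_i)>0\}$ via the Poincar$\'e$-duality symmetry, keeping the twist $\alpha_i=\beta_i/q$ and the normalization $\nu(p)=1$ consistent at every step.
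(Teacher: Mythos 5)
Your argument is correct, and it is essentially the standard one behind the references the paper simply cites for this proposition (the Tate conjecture plus semisimplicity of Frobenius for (1), and Katz--Messing together with the Artin--Mazur/slope-spectral-sequence identification of $D(\hatBr)$ with the slope-$[0,1)$ part of $H^2_{\mathrm{cris}}$ for (2) and (3), exactly as in Liedtke's Proposition 6.17). The paper gives no argument of its own beyond these citations, so your write-up --- including the careful transport from $\#\{i:\nu(\alpha_i)<0\}$ to $\#\{i:\nu(\alpha_i)>0\}$ via Poincar\'e duality --- is a faithful expansion of the intended proof.
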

\begin{proof}
The assertion of the Picard number 
follows from the Tate conjecture for $K3$ surfaces 
\cite{Charles13}, \cite{Kim-Madapusi}, \cite{Madapusi}, \cite{Maulik}.
For (2) and (3), see \cite[Proposition 6.17]{Liedtke}, \cite[Theorem 1]{Katz-Messing}.
\end{proof}

\section{Proof of Theorem \ref{supersingular reduction}}\label{Section_proof_thm_1}
In this section, we shall prove Theorem \ref{supersingular reduction}. Similar arguments can be found in Taelman's paper; 
see the proof of \cite[Proposition 25]{Taelman}.

Let $\pi_v$ be a uniformizer of $K_v$. By the local-global compatibility of class field theory, the Artin reciprocity map $\mathrm{Art}_{K}$ sends the element 
\[
x:=(1, 1, \dotsc, 1, \pi_v, 1, \dotsc, 1, 1)\in \A^{\times}_{K}
\]
to $\sigma|_{K^{\mathrm{ab}}}$, where $\sigma \in \mathrm{Gal}(\overline{K_v}/K_v)$ is a lift of the geometric Frobenius element. We want to compute the eigenvalues of $\sigma$ acting on
\begin{align*}
H^2_{\rm{\acute{e}t}}(X_{\overline{K_v}}, \Q_{\ell}{(1)})&\simeq H^{2}(X_{\C}, \Z(1))\otimes_{\Z}{\Q_{\ell}}\\
&\simeq (\Pic(X_{\C})\otimes_{\Z}{\Q_{\ell}})\oplus (T(X_{\C})\otimes_{\Z}{\Q_{\ell}})
\end{align*}
for a prime number $\ell \neq p$. The image of $x$ under the composite of the norm map $\mathrm{Nm}_{\A_K/\A_E}\colon\A^{\times}_{K} \rightarrow \A^{\times}_{E}$ with the projection $\mathrm{pr}\colon \A^{\times}_{E} \rightarrow \A^{\times}_{E, f}$ is the element
\[
y:=(1, 1, \dotsc, 1, \mathrm{Nm}_{K_v/E_{\p}}(\pi_v), 1, \dotsc, 1, 1)\in \A^{\times}_{E, f},
\]
where $\mathrm{Nm}_{K_v/E_{\p}}\colon K^{\times}_v \rightarrow E^{\times}_{\p}$ is the norm map induced by the embedding $E_{\p}\hookrightarrow K_v$ coming from $E \subset K$.

If $\q$ splits in $E$, the image of 
$
y \in\A^{\times}_{E, f}
$
 under the map
\[
\xymatrix{
\A^{\times}_{E, f} \ar[r]&  G(\Af)/G(\Q), &y \mapsto c(y)y^{-1}
}
\]
is represented by 
$$z:=(1, 1, \dotsc, 1, \mathrm{Nm}_{K_v/E_{\p}}(\pi_v)^{-1}, c(\mathrm{Nm}_{K_v/E_{\p}}(\pi_v)), 1, \dotsc, 1, 1)\in G(\Af),$$
where $c \colon E_{\p} \simeq E_{\p'}$ is the isomorphism induced by 
the complex conjugation $c \colon E \to E$ and $\p' (\neq \p)$ is a unique finite place of $E$ above $\q$ other than $\p$.

If $\q$ does not split in $E$, the image of 
$
y \in\A^{\times}_{E, f}
$
 under the map
\[
\xymatrix{
\A^{\times}_{E, f} \ar[r]&  G(\Af)/G(\Q), &y \mapsto c(y)y^{-1}
}
\]
is represented by 
$$z:=(1, 1, \dotsc, 1, c(\mathrm{Nm}_{K_v/E_{\p}}(\pi_v))\mathrm{Nm}_{K_v/E_{\p}}(\pi_v)^{-1}, 1, \dotsc, 1, 1)\in G(\Af),$$
where $c \colon E_{\p} \simeq E_{\p}$ is the involution induced by 
the complex conjugation $c \colon E \to E$. 

In both cases, by the commutative diagram of Theorem \ref{CM main theorem}, there exists an element $\eta \in G(\Q)\subset E^{\times}$ such that 
$$\rho(\sigma|_{K^{\mathrm{ab}}})=z\eta \in G(\Af).$$

Before proving Theorem \ref{supersingular reduction}, we prove the following lemma. We fix an embedding $\overline{\Q} \hookrightarrow \overline{\Q}_p$.
Let $\nu\colon \overline{\Q}_p\to \Q\cup\lbrace\infty\rbrace$ be the $p$-adic valuation normalized by $\nu(p)=1$. 
Let $\Q(\eta) \subset E$ be the subfield generated by $\eta$.
Let $\p_0$ be the finite place of $\Q(\eta)$ below $\p$.

\begin{lem}\label{lemma1}
\begin{enumerate}
\item[(1)] If $\q$ splits in $E$, the element $\eta$ is not a root of unity and the number of conjugations $\xi \in \overline{\Q}$ of $\eta$ over $\Q$ with $\nu(\xi)>0$ is equal to $[\Q(\eta)_{\p_0}:\Q_p]$.  
\item[(2)] If $\q$ does not split in $E$, the element $\eta$ is a root of unity.
\end{enumerate}
\end{lem}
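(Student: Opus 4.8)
The plan is to pin down the valuations of $\eta$ at every place of $E$, using the relation $\rho(\sigma|_{K^{\mathrm{ab}}})=z\eta$ already established, with $\eta\in G(\Q)\subset E^{\times}$ satisfying $\eta c(\eta)=1$, together with the explicit form of $z$. First the archimedean and prime-to-$p$ data. Since $E$ is $CM$, complex conjugation on $\C$ restricts to $c$ under every embedding $E\hookrightarrow\C$, so $\eta c(\eta)=1$ gives $|\iota(\eta)|=1$ for all $\iota\colon E\hookrightarrow\C$; hence every conjugate of $\eta$ over $\Q$ has archimedean absolute value $1$. For $\ell\neq p$ and any place $\lambda\mid\ell$ of $E$ we have $z_{\lambda}=1$, so $\sigma$ acts on $T(X_{\C})\otimes_{\Z}\Q_{\ell}$ by $\eta$, and the conjugates of $\eta$ are the geometric Frobenius eigenvalues on the transcendental part of $H^{2}_{\mathrm{\acute{e}t}}(X_{\overline{K_v}},\Q_{\ell}(1))$, which are $\ell$-adic units (the eigenvalues on $H^{2}$ are algebraic integers and, by Poincar\'e duality, $\ell$-adic units). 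Thus $v_{\lambda}(\eta)=0$ for every finite $\lambda$ not above $p$, i.e.\ $\eta\in\mathscr{O}_E[1/p]^{\times}$.

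Next I isolate the places above $p$. The representation $H^{2}_{\mathrm{\acute{e}t}}(X_{\overline{K_v}},\Q_p)$ is crystalline (good reduction), and its Hodge--Tate weights are concentrated so that only the lines $H^{2,0}$ and $H^{0,2}$ carry nonzero weights. Under the $E$-action these two lines lie in the components $T_{\p}$ and $T_{c(\p)}$ of the decomposition $T(X_{\C})\otimes_{\Z}\Q_p=\bigoplus_{\lambda\mid p}T_{\lambda}$, because the embedding $E\hookrightarrow\C$ cutting out $H^{2,0}$ (namely $\epsilon_{X_{\C}}$) induces the place $\p$ under the fixed $\overline{\Q}\hookrightarrow\overline{\Q}_p$. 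Consequently, for $\lambda\mid p$ with $\lambda\neq\p,c(\p)$ the component $T_{\lambda}$ has all Hodge--Tate weights $0$, hence is unramified at $v$; since $z_{\lambda}=1$ this yields $v_{\lambda}(\eta)=0$. Combining with $v_{\lambda}(\eta)=-v_{c(\lambda)}(\eta)$, which follows from $\eta c(\eta)=1$, the only possibly nonzero valuations of $\eta$ are at $\p$ and $c(\p)$, with $v_{\p}(\eta)=-v_{c(\p)}(\eta)$.

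Now the two cases. If $\q$ does not split, then $c(\p)=\p$, so $v_{\p}(\eta)=-v_{\p}(\eta)=0$ and every valuation of $\eta$ vanishes; being a unit of $\mathscr{O}_E$ all of whose conjugates have absolute value $1$, $\eta$ is a root of unity by Kronecker's theorem, proving (2). If $\q$ splits, then $c(\p)=\p'\neq\p$ and it remains only to fix the sign of $v_{\p}(\eta)$. Here $z_{\p}=\mathrm{Nm}_{K_v/E_{\p}}(\pi_v)^{-1}$, so $v_{\p}(z_{\p})=-f$ with $f:=f(K_v/E_{\p})>0$, whence $v_{\p}(\eta)=v_{\p}(\rho(\sigma)_{\p})+f$. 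The sign is governed by the nonzero Hodge--Tate weight of the distinguished component $T_{\p}$ (coming from $H^{2,0}$): this identifies $\p$ with the positive-slope direction of geometric Frobenius and forces $v_{\p}(\eta)>0$, hence $v_{c(\p)}(\eta)<0$. Thus $\eta$ is not a root of unity, and the unique place of $E$ above $p$ at which $\eta$ has positive valuation is $\p$. Translating to $\Q(\eta)$, a conjugate $\xi=\tau(\eta)$ with $\tau\colon\Q(\eta)\hookrightarrow\overline{\Q}_p$ satisfies $\nu(\xi)>0$ exactly when $\tau$ induces the place $\p_0$ below $\p$, and the number of such $\tau$ is $[\Q(\eta)_{\p_0}:\Q_p]$, proving (1).

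The main obstacle is the determination in the last paragraph: controlling the Hodge--Tate weights component-by-component under the $E$-action, and above all pinning the sign of $v_{\p}(\eta)$ in the split case. The vanishing of $v_{\lambda}(\eta)$ off $\{\p,c(\p)\}$ is clean once one knows the representation is crystalline with the standard $K3$ Hodge--Tate weights, but the sign requires genuinely connecting the distinguished place $\p$ to the positive-slope direction of geometric Frobenius. I would expect to establish this either through a Shimura--Taniyama-type reciprocity for the associated algebraic Hecke character, or through the explicit Lubin--Tate description of the relevant crystalline character developed later in the paper; the remaining steps are formal consequences of $\eta c(\eta)=1$ and the explicit shape of $z$.
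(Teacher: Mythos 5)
Your proposal has a genuine gap at the decisive point. The heart of part (1) is the sign $v_{\p}(\eta)>0$, and you do not prove it: you write that the sign ``is governed by the nonzero Hodge--Tate weight of the distinguished component $T_{\p}$'' and that you ``would expect to establish this either through a Shimura--Taniyama-type reciprocity \ldots or through the explicit Lubin--Tate description''; neither argument is carried out. Moreover, the preliminary claim this rests on --- that the lines $H^{2,0}$ and $H^{0,2}$ sit in the components $T_{\p}$ and $T_{c(\p)}$ of $T(X_{\C})\otimes_{\Z}\Q_p$ --- is itself an unverified compatibility between the complex embedding $\epsilon_{X_{\C}}$ and the place $\p$, which is defined via $E\subset K$ and $v$ rather than via the fixed embedding $\overline{\Q}\hookrightarrow\overline{\Q}_p$; deciding whether it is $\p$ or $c(\p)$ that carries $H^{2,0}$ is exactly the sign ambiguity you need to resolve. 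The same unproved localization of Hodge--Tate weights is what you use to kill $v_{\lambda}(\eta)$ at the places $\lambda\mid p$ other than $\p$ and $c(\p)$, so part (2) is also affected whenever $F$ has more than one place above $p$.

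The paper needs none of this machinery. Since $\rho$ is continuous and $\mathrm{Gal}(K^{\mathrm{ab}}/K)$ is compact, the element $\rho(\sigma|_{K^{\mathrm{ab}}})=z\eta$ lies in the maximal compact subgroup $\{\, x \in \widehat{\Z}\otimes_{\Z}{\mathscr{O}_E} \mid xc(x)=1 \, \}$ of $G(\Af)$, i.e.\ it is a unit at every finite place; hence $v_{\lambda}(\eta)=-v_{\lambda}(z)$ for every finite place $\lambda$ of $E$, and everything is read off from the explicit form of $z$. In the split case $v_{\p}(z)=v_{\p}(\mathrm{Nm}_{K_v/E_{\p}}(\pi_v)^{-1})<0$ forces $v_{\p}(\eta)>0$, while $v_{\lambda}(\eta)\leq 0$ at every other finite place, so $\p$ is the unique place of $E$ above $p$ where $\eta$ has positive valuation, which gives (1). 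In the non-split case $c(\mathrm{Nm}_{K_v/E_{\p}}(\pi_v))\mathrm{Nm}_{K_v/E_{\p}}(\pi_v)^{-1}$ is a unit, so $\eta$ is a unit at every finite place and Kronecker's theorem gives (2). Your archimedean and $\ell\neq p$ steps are correct (though the detour through Weil numbers and Poincar\'e duality is unnecessary); once you insert the compactness observation, the rest of your argument collapses to the paper's.
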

\begin{proof}
At first, since $\mathrm{Gal}(K^{\mathrm{ab}}/K)$ is compact, we observe that $\rho(\sigma|_{K^{\mathrm{ab}}})=z\eta$ lies in the maximal compact subgroup 
$$\{\, x \in \widehat{\Z}\otimes_{\Z}{\mathscr{O}_E} \mid xc(x)=1 \, \}\subset G(\Af).$$

If $\q$ splits in $E$, we see that $\p$ is a unique finite place of $E$ above $p$ such that the valuation of $\eta \in E_{\p}$ is positive. Hence $\p_0$ is a unique finite place of $\Q(\eta)$ such that the valuation of $\eta \in \Q(\eta)_{\p_0}$ is positive. It follows that $\eta$
is not a root of unity and the number of conjugations $\xi \in \overline{\Q}$ of $\eta$ over $\Q$ with $\nu(\xi)>0$ is equal to $[\Q(\eta)_{\p_0}:\Q_p]$.

If $\q$ does not split in $E$, we see that $\eta$
is an algebraic integer. 
Moreover, since we have $\eta c(\eta)=1$ and $E$ is a $CM$ field, 
every conjugation $\xi \in \overline{\Q}\subset \C$ of $\eta$ over $\Q$ has complex absolute value $1$. By Kronecker's theorem, the algebraic integer $\eta$ is a root of unity. \end{proof}

\begin{proof}[\textbf{Proof of Theorem \ref{supersingular reduction}}] 
Since $T({X_{\C}})\otimes_{\Z}{\Q}$ is a one-dimensional $E$-vector space, 
by Theorem \ref{CM main theorem}, 
the characteristic polynomial of $\sigma$ acting on $T(X_{\C})\otimes_{\Z}{\Q_{\ell}}$ can be identified with the characteristic polynomial of
$
E \to E
$,
$x \mapsto x\eta
$
as a $\Q$-linear map. Hence it is equal to
$$f(t)^{[E:\Q(\eta)]},$$ 
where $f(t)$ is the monic minimal polynomial of $\eta \in E$ over $\Q$. 

If $\q$ splits in $E$, no root of $f(t)$ is a root of unity by Lemma \ref{lemma1} (1).
Since $\p$ is a unique finite place of $E$ above $\p_0$, we have $[E:\Q(\eta)]=[E_{\p}:\Q(\eta)_{\p_0}]$ by \cite[Chapter 2, Theorem 1(iii)]{Serre2}. Hence the number of roots $\xi\in \overline{\Q}$ of $f(t)^{[E:\Q(\eta)]}$ with $\nu(\xi)>0$ counted with their multiplicities is equal to $[E_{\p}:\Q_p]$ by Lemma \ref{lemma1} $(1)$. The eigenvalues of $\sigma$ acting on 
\[
\Pic(X_{\C})\otimes_{\Z}{\Q_{\ell}}\simeq\Pic(X_{\overline{K_{v}}})\otimes_{\Z}{\Q_{\ell}}
\] 
are roots of unity. Therefore, the number of eigenvalues $\xi \in \overline{\Q}$ of $\sigma$ acting on $H^2_{\rm{\acute{e}t}}(X_{\overline{K_v}}, \Q_{\ell}{(1)})$ which are roots of unity is equal to the rank of $\Pic(X_{\C})$, counted with their multiplicities. Moreover, the number of eigenvalues $\xi\in \overline{\Q}$ of $\sigma$ acting on $H^2_{\rm{\acute{e}t}}(X_{\overline{K_v}}, \Q_{\ell}{(1)})$ with $\nu(\xi)>0$ is equal to $[E_{\p}:\Q_p]$, counted with their multiplicities. It follows that the Picard number of $\mathscr{X}_{\overline{v}}$ is equal to the Picard number of $X_{\C}$, and the height $h(\mathscr{X}_{\overline{v}})$ of the formal Brauer group of $\mathscr{X}_{\overline{v}}$ is equal to $[E_{\p}:\Q_p]$ by Proposition \ref{height-Frobenius}. 

If $\q$ does not split in $E$, every root of $f(t)$ is a root of unity by Lemma \ref{lemma1} (2). Therefore, every eigenvalue of $\sigma$ acting on $H^2_{\rm{\acute{e}t}}(X_{\overline{K_v}}, \Q_{\ell}{(1)})$ is a root of unity. It follows that 
the Picard number of $\mathscr{X}_{\overline{v}}$ is equal to $22$, 
and the height $h(\mathscr{X}_{\overline{v}})$ of the formal Brauer group of $\mathscr{X}_{\overline{v}}$ is $\infty$ by Proposition \ref{height-Frobenius}. 

The proof of Theorem \ref{supersingular reduction} is complete.
\end{proof}
\section{Integral $p$-adic $\mathrm{\acute{E}}$tale cohomology of $K3$ surfaces with $CM$}
\label{The Galois module}
In this section and the next section, we make some preparations to prove Theorem \ref{Artin invariant}. In this section, we shall describe the Galois module $H^2_{\rm{\acute{e}t}}(X_{\overline{K_v}}, \Z_{p})$. We keep the notation in previous sections. We assume that $X_{\C}$ is a projective $K3$ surface over $\C$ with $CM$ by $E$ satisfying the assumptions of Theorem \ref{Artin invariant}. In particular, we assume that 
$X_{\C}$ has a model $X_K$ over a number field $K \subset \C$ 
containing $E$, $X_K$ has good reduction at $v$, and $\q$ does not split in $E$. By Theorem \ref{supersingular reduction}, the reduction $\mathscr{X}_{\overline{v}}$ is a supersingular $K3$ surface over $\overline{k(v)}$.

We fix a uniformizer $\pi_{\p}$ of $E_\p$. We recall the Lubin-Tate character attached to an embedding 
$f\colon E_\p \hookrightarrow K_v.$ We have an isomorphism 
$$\mathrm{Gal}(E^{\mathrm{ab}}_{\p}/E_{\p})\simeq \mathscr{O}^{\times}_{E_{\p}}\times\widehat{\Z}$$
induced by the (fixed) uniformizer $\pi_{\p}$ and the Artin reciprocity map provided by local class field theory with Deligne's normalization. We fix an isomorphism $\overline{E_{\p}}\simeq\overline{K_v}$ which extends $f$. This identification induces a homomorphism
\[
\mathrm{Gal}(\overline{K_v}/K_v)\rightarrow \mathrm{Gal}(E^{\mathrm{ab}}_{\p}/E_{\p}).
\]
We denote this homomorphism by the same symbol $f$.
We define a continuous homomorphism  $\chi_{\pi_{\p}, f}\colon \mathrm{Gal}(\overline{K_v}/K_v) \rightarrow \mathscr{O}^{\times}_{E_{\p}}$ attached to $\pi_{\p}$ and the embedding $ f\colon E_\p \hookrightarrow K_v$ by the following composite:
\[
\xymatrix{
\chi_{\pi_{\p}, f}\colon \mathrm{Gal}(\overline{K_v}/K_v)\ar[r]_-{f}&\mathrm{Gal}(E^{\mathrm{ab}}_{\p}/E_{\p})\simeq \mathscr{O}^{\times}_{E_{\p}}\times\widehat{\Z} \ar[r]_-{\mathrm{pr}}&\mathscr{O}^{\times}_{E_{\p}}.
}
\]
It does not depend on the choice of an isomorphism $\overline{E_{\p}}\simeq\overline{K_v}$ extending $f$.
Since we will not change the uniformizer $\pi_{\p}$ in the sequel, we will abbreviate
$\chi_{f}:=\chi_{\pi_{\p}, f}$.

We denote the $\mathrm{Gal}(\overline{K_v}/K_v)$-module ${\O_{E_{\p}}}$ arising from a character $\chi\colon \mathrm{Gal}(\overline{K_v}/K_v) \rightarrow \mathscr{O}^{\times}_{E_{\p}}$ by
\[
(\O_{E_{\p}}, \chi).
\]

\begin{rem}\label{Lubin-Tate module}
\rm{Let $\mathcal{F}_{\pi_{\p}}$ be the \textit{Lubin-Tate formal group} over $\O_{E_{\p}}$ associated with $\pi_{\p}$.
(This is denoted by $\mathcal{G}$ in \cite[Section 2.2]{AGHM}.)
For an embedding $f\colon E_\p \hookrightarrow K_v$, the $p$-adic Tate module of the base change $\mathcal{F}_{\pi_{\p}, \O_{K_v}}$ of $\mathcal{F}_{\pi_{\p}}$ along the induced homomorphism $\O_{E_\p} \hookrightarrow \O_{K_v}$ is isomorphic to $(\O_{E_{\p}}, \chi_f)$ as a $\mathrm{Gal}(\overline{K_v}/K_v)$-module.
See \cite{Lubin-Tate} for details. (See also \cite[Chapter 3, Appendix, Proposition 4]{Serre3}.)
For this reason, we will often call $\chi_f$ the \textit{Lubin-Tate character} attached to $\pi_{\p}$ and $f$ (or simply the Lubin-Tate character attached to $f$).}
\end{rem}

Let 
\[
\chi_{\mathrm{cyc}}\colon \mathrm{Gal}(\overline{K_v}/K_v) \rightarrow \Z^{\times}_p
\]
be the $p$-adic cyclotomic character. For an integer $n \in \Z$, a free $\Z_p$-module of rank $1$ on which $\mathrm{Gal}(\overline{K_v}/K_v)$ acts via $\chi^{n}_{\mathrm{cyc}}$ is denoted by $\Z_p(n)$. We put $\Z_p:=\Z_p(0)$. Recall that we have an embedding $E \subset K$ as subfields of $\C$. Let 
\[
\iota \colon E_{\p} \hookrightarrow K_{v} 
\]
be the embedding induced by $E \subset K$. 
Let $\chi_{\iota}$ be the Lubin-Tate character 
attached to the embedding $\iota \colon E_{\p} \hookrightarrow K_v$. 
Similarly, let $\chi_{\iota\circ c}$ be the Lubin-Tate character 
attached to the composite of the involution $c \colon E_{\p}\simeq E_{\p}$ 
(it is induced by the complex conjugation $c$ on $E$) 
with $\iota \colon E_{\p} \hookrightarrow K_v$.
We define the character 
\[
\widetilde{\chi}\colon \mathrm{Gal}(\overline{K_v}/K_v) \rightarrow \mathscr{O}^{\times}_{E_{\p}}
\] 
by 
\[
\widetilde{\chi}:=\chi^{-1}_{\iota}\chi_{\iota\circ c}\chi^{-1}_{\mathrm{cyc}}.
\]
Here the composite of $\chi_{\mathrm{cyc}}$ with the canonical embedding $\Z^{\times}_p\hookrightarrow\mathscr{O}^{\times}_{E_{\p}}$, $x \mapsto x$ is denoted by the same symbol $\chi_{\mathrm{cyc}}$.
The character $\widetilde{\chi}$ is independent of the choice of $\pi_\p$; see the proof of Proposition \ref{decomposition} below.

Recall that $X_{K_v}$ has good reduction and the reduction $\mathscr{X}_{\overline{v}}$ is a supersingular $K3$ surface over $\overline{k(v)}$.
To prove Theorem \ref{Artin invariant}, we may replace $K$ by a finite extension of it.
Hence we may assume 
$$\Pic(X_{\C})\simeq\Pic(X_K)\simeq\Pic(X_{K_v})$$
and 
$$\Pic(\mathscr{X}_{\overline{v}})\simeq\Pic(\mathscr{X}\otimes_{\mathscr{O}_{K_v}}k(v)).$$
Under these assumptions, the action of $\mathrm{Gal}(\overline{K_v}/K_v)$ on $H^2_{\rm{\acute{e}t}}(X_{\overline{K_v}}, \Z_{p})$ is described as follows.

\begin{prop}\label{decomposition}
Under the above assumptions, there is an isomorphism of $\mathrm{Gal}(\overline{K_v}/K_v)$-modules 
\[
H^2_{\rm{\acute{e}t}}(X_{\overline{K_v}}, \Z_{p})\simeq \Z_p(-1)^{\oplus22-[E_{\p}:\Q_p]} \oplus ({\O_{E_{\p}}}, \widetilde{\chi}).
\]
\end{prop}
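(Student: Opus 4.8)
The plan is to split the $p$-adic realization into its algebraic and transcendental parts and to identify the local Galois characters on the latter by restricting Theorem \ref{CM main theorem} to the decomposition group at $v$.

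First I would work with the Tate-twisted cohomology $H^2_{\rm{\acute{e}t}}(X_{\overline{K_v}}, \Z_p(1))$, which after the chosen finite extension of $K$ carries the orthogonal decomposition
\[
H^2_{\rm{\acute{e}t}}(X_{\overline{K_v}}, \Z_p(1)) \simeq (\Pic(X_{K_v})\otimes_{\Z}{\Z_p}) \oplus (T(X_{\C})\otimes_{\Z}{\Z_p}),
\]
on whose first summand $\mathrm{Gal}(\overline{K_v}/K_v)$ acts trivially, all line bundles being defined over $K_v$. That this decomposition is integral and not merely rational is where the first hypothesis of Theorem \ref{Artin invariant} enters: since $p\nmid\disc\Pic(X_{\C})$ the intersection pairing stays $p$-adically perfect on each summand, so $\Pic(X_{K_v})\otimes_{\Z}{\Z_p}$ is a Galois-stable direct summand. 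By Lemma \ref{quadratic space} the second hypothesis makes $T(X_{\C})\otimes_{\Z}{\Z_p}$ free of rank one over $\O_E\otimes_{\Z}{\Z_p} \simeq \prod_{\p'\mid p}\O_{E_{\p'}}$, and since the Galois action is $\O_E\otimes_{\Z}{\Z_p}$-linear it respects this product, so each factor is a free rank-one $\O_{E_{\p'}}$-module on which $\mathrm{Gal}(\overline{K_v}/K_v)$ acts through a character $\psi_{\p'}\colon \mathrm{Gal}(\overline{K_v}/K_v)\to\O_{E_{\p'}}^{\times}$.

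Next I would compute each $\psi_{\p'}$ from Theorem \ref{CM main theorem}. Restricting the commutative diagram to the decomposition group at $v$ and using local-global compatibility of the Artin maps, the character $\psi_{\p'}\circ\mathrm{Art}_{K_v}$ is read off from $a_v \mapsto c(\mathrm{Nm}_{\A_K/\A_E}(a_v))\,\mathrm{Nm}_{\A_K/\A_E}(a_v)^{-1}$, together with the $G(\Q)$-correction $\eta$ produced in the proof of Theorem \ref{supersingular reduction}. The crucial point, exactly as in the non-split computation there, is that $\mathrm{Nm}_{K_v/E_{\p}}(a_v)$ is supported at $\p$ and that, because $\q$ does not split, $\p$ is the \emph{unique} place of $E$ above $\q$ and is fixed by $c$ (here the non-split hypothesis and Proposition \ref{local_lemma} are used). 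Hence the local norm contributes only to the $\p$-component, giving $\psi_{\p}=\chi_{\iota}^{-1}\chi_{\iota\circ c}$ once the unit part of $\mathrm{Nm}_{K_v/E_{\p}}$ is matched with the Lubin-Tate character $\chi_{\iota}$ through its definition and Remark \ref{Lubin-Tate module}; the correction $c(\mathrm{Nm})$ computed through the twisted embedding $\iota\circ c$ supplies the factor $\chi_{\iota\circ c}$. For $\p'\neq\p$ the local norm gives no contribution, so $\psi_{\p'}$ comes only from $\eta$ and is unramified with Frobenius a root of unity; after enlarging $K$ it may be taken trivial. Finally I untwist: since $H^2_{\rm{\acute{e}t}}(X_{\overline{K_v}}, \Z_p) = H^2_{\rm{\acute{e}t}}(X_{\overline{K_v}}, \Z_p(1))\otimes_{\Z_p}{\Z_p(-1)}$, every character is multiplied by $\chi_{\mathrm{cyc}}^{-1}$, so the algebraic summand and the components $\p'\neq\p$ become copies of $\Z_p(-1)$, for a total rank $22-[E_{\p}:\Q_p]$, while the $\p$-component becomes $(\O_{E_{\p}}, \chi_{\iota}^{-1}\chi_{\iota\circ c}\chi_{\mathrm{cyc}}^{-1})=(\O_{E_{\p}},\widetilde{\chi})$. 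The independence of $\widetilde{\chi}$ from $\pi_{\p}$ follows because the two Lubin-Tate characters are formed with the same uniformizer, so its contribution cancels in $\chi_{\iota}^{-1}\chi_{\iota\circ c}$.

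The main obstacle is the precise determination of the local characters $\psi_{\p'}$, and in particular the claim that the components with $\p'\neq\p$ are purely cyclotomic after untwisting. Theorem \ref{CM main theorem} describes $\rho$ only modulo $G(\Q)$ and through the adelic reciprocity map, so passing from this to the genuine characters on each $\p'$-factor requires care with Deligne's normalization (geometric versus arithmetic Frobenius and the resulting signs), with local-global compatibility, and with the algebraicity of the underlying Hecke character in order to exclude a residual finite-order twist after enlarging $K$. Verifying the integral splitting and checking the Hodge-Tate normalization so that the $(2,0)$ and $(0,2)$ parts are concentrated in the $\p$-component is the other delicate point; granted these, the remaining identifications are formal consequences of Remark \ref{Lubin-Tate module} and the non-split hypothesis.
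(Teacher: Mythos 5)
Your proposal is correct and follows essentially the same route as the paper: the same integral splitting off of $\Pic(X_{K_v})\otimes_{\Z}\Z_p$ via $p\nmid\disc\Pic(X_{\C})$, the same reduction to computing the characters $\mathrm{pr}_{\p'}\circ\rho_p$ on the factors of $\O_E\otimes_{\Z}\Z_p$ by restricting Theorem \ref{CM main theorem} along $\mathrm{Art}_{K_v}$, and the same matching of $c(\mathrm{Nm}_{K_v/E_{\p}}(x))\mathrm{Nm}_{K_v/E_{\p}}(x)^{-1}$ with $\chi_{\iota}^{-1}\chi_{\iota\circ c}$ via the unit-part projection attached to $\pi_{\p}$. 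The only (immaterial) divergence is in disposing of the $G(\Q)$-ambiguity: the paper shows $\eta=1$ outright, because supersingularity together with $\Pic(\mathscr{X}_{\overline{v}})\simeq\Pic(\mathscr{X}\otimes_{\mathscr{O}_{K_v}}k(v))$ forces the Galois action on $T(X_{\C})\otimes_{\Z}\Q_{\ell}$ to be trivial for $\ell\neq p$, whereas you trivialize the finite-order character $x\mapsto\eta(x)$ by a further finite extension of $K$ (note it need not be unramified a priori, but that does not affect your argument).
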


\begin{proof}
We shall show there is an isomorphism of $\mathrm{Gal}(\overline{K_v}/K_v)$-modules 
$$H^2_{\rm{\acute{e}t}}(X_{\overline{K_v}}, \Z_{p}(1))\simeq \Z_p^{\oplus22-[E_{\p}:\Q_p]} \oplus ({\O_{E_{\p}}}, \chi^{-1}_{\iota}\chi_{\iota\circ c}). $$
By the first assumption of Theorem \ref{Artin invariant}, the discriminant of $\Pic(X_{\C})$ is not divisible by $p$. Hence we have 
$$H^2_{\rm{\acute{e}t}}(X_{\overline{K_v}}, \Z_{p}(1))\simeq (\Pic(X_{\C})\otimes_{\Z}{\Z_{p}})\oplus (T(X_{\C})\otimes_{\Z}{\Z_{p}}).$$
Since we have assumed $\Pic(X_{\C})\simeq\Pic(X_K)\simeq\Pic(X_{K_v})$, the action of $\mathrm{Gal}(\overline{K_v}/K_v)$ on $\Pic(X_{\C})\otimes_{\Z}{\Z_{p}}$ is trivial. 

Hence it is enough to show there is an isomorphism of $\mathrm{Gal}(\overline{K_v}/K_v)$-modules 
$$T(X_{\C})\otimes_{\Z}{\Z_{p}} \simeq \Z^{\oplus22-m}_p\oplus (\O_{E_{\p}}, \chi^{-1}_{\iota}\chi_{\iota\circ c}),$$  
where we put $m:=\rank_{\Z}{\Pic(X_{\C})}+[E_{\p}:\Q_p]$. Let
\[
\rho\colon \mathrm{Gal}(K^{\mathrm{ab}}/K) \rightarrow G(\Af)
\]
be the continuous homomorphism provided by the main theorem of complex multiplication for $K3$ surfaces; see Theorem \ref{CM main theorem}.
Since $\mathrm{Gal}(K^{\mathrm{ab}}_v/K_v)$ is compact, the composite 
\[
\xymatrix{
\mathrm{Gal}({K^{\mathrm{ab}}_v}/K_v)\ar[r]& \mathrm{Gal}(K^{\mathrm{ab}}/K) \ar[r]_-{\rho}& G(\Af)\ar[r]_-{\mathrm{pr}}&(E\otimes_{\Q}{\Q_p})^{\times}
}
\]
factors through a continuous homomorphism 
\[
\rho_p \colon \mathrm{Gal}(K_v^{\mathrm{ab}}/K_v) \rightarrow (\mathscr{O}_{E}\otimes_{\Z}{\Z_p})^{\times}.
\]

By the second assumption of Theorem \ref{Artin invariant}, we have
\[
\mathrm{End}_{\mathrm{Hdg}}(T(X_{\C}))\otimes_{\Z}{\Z_{(p)}}\simeq \mathscr{O}_{E}\otimes_{\Z}{\Z_{(p)}}.
\]
By Lemma \ref{quadratic space}, we see that $T(X_{\C})\otimes_{\Z}{\Z_{p}}$ is a free $\mathscr{O}_{E}\otimes_{\Z}{\Z_{p}}$-module of rank $1$. There is a decomposition \[
\mathscr{O}_{E}\otimes_{\Z}{\Z_p} \simeq \prod_{\p'|p}\mathscr{O}_{E_{\p'}},
\]
where $\p'$ is a finite place of $E$ above $p$. Hence we have an isomorphism of $\mathrm{Gal}(\overline{K_v}/K_v)$-modules 
\[
T(X_{\C})\otimes_{\Z}{\Z_{p}} \simeq  \prod_{\p'|p}\mathscr{O}_{E_{\p'}},
\]
where the action of $\mathrm{Gal}(\overline{K_v}/K_v)$ on $\mathscr{O}_{E_{\p'}}$ is given by $\mathrm{pr}_{\p'}\circ \rho_p$. Here 
\[
\mathrm{pr}_{\p'}\colon (\mathscr{O}_{E}\otimes_{\Z}{\Z_p})^{\times} \simeq \prod_{\p'|p}\mathscr{O}^{\times}_{E_{\p'}} \rightarrow \mathscr{O}^{\times}_{E_{\p'}}
\] is the projection. Therefore, it suffices to prove that
$$(\mathrm{pr}_{\p'}\circ \rho_p)(g)=
\begin{cases}
  1 & (\p' \neq \p)\\
  \chi^{-1}_{\iota}(g)\chi_{\iota\circ c}(g)  & (\p' = \p) 
\end{cases}
$$
for every $g \in \mathrm{Gal}(K_v^{\mathrm{ab}}/K_v)$, where $\chi_{\iota}$ and $\chi_{\iota\circ c}$ are considered as continuous homomorphisms from $\mathrm{Gal}(K_v^{\mathrm{ab}}/K_v)$ to ${\O}^{\times}_{E_{\p}}$. 

It suffices to prove these equalities when we restrict them to $K_v^{\times}$ along the local Artin reciprocity map 
\[
\mathrm{Art}_{K_v} \colon K_v^{\times} \hookrightarrow \mathrm{Gal}(K_v^{\mathrm{ab}}/K_v)
\]
since $\mathrm{Art}_{K_v}(K_v^{\times}) \subset \mathrm{Gal}(K_v^{\mathrm{ab}}/K_v)$ is dense. We take an element $x \in K_v^{\times}$. By the local-global compatibility of class field theory and the commutative diagram of Theorem \ref{CM main theorem}, there exists an element $\eta \in G(\Q)\subset E^{\times}$ such that 
$$\rho(\mathrm{Art}_{K_v}(x))=(1, 1, \dotsc, 1, c(\mathrm{Nm}_{K_v/E_{\p}}(x))\mathrm{Nm}_{K_v/E_{\p}}(x)^{-1}, 1, \dotsc, 1, 1)\eta.$$
Since $\Pic(\mathscr{X}_{\overline{v}})\simeq\Pic(\mathscr{X}\otimes_{\mathscr{O}_{K_v}}k(v))$ and $\mathscr{X}_{\overline{v}}$ is supersingular by Theorem \ref{supersingular reduction}, the action of $\mathrm{Gal}(\overline{K_v}/K_v)$ on $T(X_{\C})\otimes_{\Z}{\Q_{\ell}}$ is trivial for all $\ell \neq p$. It follows that $\eta=1$. Hence we have 
$$(\mathrm{pr}_{\p'}\circ \rho_p)(\mathrm{Art}_{K_v}(x))=
\begin{cases}1 & (\p' \neq \p)\\  
c(\mathrm{Nm}_{K_v/E_{\p}}(x))\mathrm{Nm}_{K_v/E_{\p}}(x)^{-1} & (\p'=\p).
\end{cases}
$$

Recall that we have fixed a uniformizer $\pi_{\p}$ of $E_{\p}$. It gives an isomorphism $\mathscr{O}^{\times}_{E_{\p}}\times{\Z} \simeq E^{\times}_{\p}, (x, n) \mapsto x\pi^{n}_{\p}$. The composite of the inverse map of this isomorphism with the projection is denoted by $\chi' \colon E^{\times}_{\p} \rightarrow \O^{\times}_{E_{\p}}$. Then, we have 
\[
\begin{cases}
\chi_{\iota}(\mathrm{Art}_{K_v}(x))=\chi'(\mathrm{Nm}_{K_v/E_{\p}}(x))\\
\chi_{\iota\circ c}(\mathrm{Art}_{K_v}(x))=\chi'(c(\mathrm{Nm}_{K_v/E_{\p}}(x))).
\end{cases}
\]
We note that, for every $z_1, z_2 \in E^{\times}_{\p}$ which have the same valuation, we have
\[
\chi'(z_1)\chi'(z_2)^{-1}=\chi'(z_1z^{-1}_2)=z_1z^{-1}_2.
\]
Hence we have
\begin{align*}
\chi^{-1}_{\iota}(\mathrm{Art}_{K_v}(x))\chi_{\iota\circ c}(\mathrm{Art}_{K_v}(x))
&=\chi'(c(\mathrm{Nm}_{K_v/E_{\p}}(x)))\chi'(\mathrm{Nm}_{K_v/E_{\p}}(x))^{-1}\\
&=c(\mathrm{Nm}_{K_v/E_{\p}}(x))\mathrm{Nm}_{K_v/E_{\p}}(x)^{-1}.
\end{align*}

This completes the proof of Proposition \ref{decomposition}.
\end{proof}

\section{The Breuil-Kisin modules associated with $K3$ surfaces with $CM$}\label{F-crystal associated with Lubin-Tate characters} 

In this section, we give an explicit description of the $F$-crystal $H^2_{\mathrm{cris}}(\mathscr{X}_{\overline{v}}/W)$ of the supersingular $K3$ surface $\mathscr{X}_{\overline{v}}$ over $\overline{k(v)}$, where $W:=W(\overline{\F}_{p})$ is the ring of Witt vectors of $\overline{k(v)}=\overline{\F}_{p}$. We keep the notation in Section \ref{The Galois module}.
In particular, we have fixed a uniformizer $\pi_{\p}$ of $E_{\p}$.

Let $K_v^{\mathrm{ur}}$ be the maximal unramified extension of $K_v$ in $\overline{K_v}$, and 
\[
L:=\widehat{K_v^{\mathrm{ur}}}
\] the completion of $K_v^{\mathrm{ur}}$. We identify $W$ with the $p$-adic completion of the maximal unramified extension of $\Z_p$ in $\overline{K_v}$. Hence we have an embedding $W\hookrightarrow L$. Then $L$ is a finite totally ramified extension of $\mathrm{Frac}(W)$.

Let $E_{\p, 0} \subset E_{\p}$ be the maximal unramified extension of $\Q_p$ in $E_{\p}$.
We put $d:=[E_{\p, 0}:\Q_p]$.
By Proposition \ref{local_lemma}, the quadratic extension $E_{\p}/F_{\q}$ is unramified.
It follows that $d$ is an even integer.

We denote the set of embeddings from $E_{\p, 0}$ to $\mathrm{Frac}(W)\subset L$ by $\mathrm{Emb}(E_{\p, 0})$. Let $\iota_0 \in \mathrm{Emb}(E_{\p, 0})$ be an embedding induced by 
\[
\xymatrix{E_{\p, 0} \ar@{^{(}->}[r]& E_{\p} \ar@{^{(}->}[r]_-{\iota}& K_{v} \ar@{^{(}->}[r]& L.}
\]
We denote the Frobenius on $\mathrm{Frac}(W)$ by $\varphi$. Then we have 
$$\mathrm{Emb}(E_{\p, 0})=\{\, \varphi^{i}\circ\iota_0 \mid 0 \leq i \leq d-1 \}.$$ 
When we consider $\O_{E_{\p, 0}}$ as a subring of $\O_{E_{\p}}$ by the inclusion $\O_{E_{\p, 0}} \subset \O_{E_{\p}}$ and a subring of $W$ by the embedding $\varphi^{i}\circ\iota_0$, we denote it by $R_i$. 
We put 
\[
W_i:=\O_{E_{\p}}\otimes_{R_i}{W}.
\]
Then we have a decomposition 
\[
\O_{E_{\p}}\otimes_{\Z_p}W \simeq\underset{0\leq i \leq d-1}{\prod}W_i.
\]

The main result of this section is as follows:
\begin{thm}\label{crystalline cohomology1}
The $F$-crystal $H^2_{\mathrm{cris}}(\mathscr{X}_{\overline{v}}/W)$ is isomorphic to the $F$-crystal 
$$W(-1)^{\oplus22-[E_{\p}:\Q_p]}\oplus(\O_{E_{\p}}\otimes_{\Z_p}W, \beta).$$
Here, the $F$-crystal $(\O_{E_{\p}}\otimes_{\Z_p}W, \beta)$ is a $W$-module $\O_{E_{\p}}\otimes_{\Z_p}W$ equipped with an $\O_{E_{\p}}$-equivariant Frobenius $\varphi$ given by $\varphi(1)=\beta$, and the element
\[
\beta\in \O_{E_{\p}}\otimes_{\Z_p}W\simeq \underset{0\leq i \leq d-1}{\prod}W_i
\] is specified as follows; the $i$-th component $\beta_{i}$ of $\beta$ is  
\[
\beta_{i}=
\begin{cases}
p\pi_{\p} &(i=1)\\
p\pi^{-1}_{{\p}} &(i=d')\\
p &(i \neq 1, d'),
\end{cases}
\]
where we put $d' := d/2+1$ if $d \neq 2$, 
and $d' := 0$ if $d=2$.
\end{thm}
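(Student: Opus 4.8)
The plan is to transport the computation to the crystalline side via the integral comparison theorem of Bhatt, Morrow, and Scholze \cite{BMS0}, \cite{BMS}, feeding in the explicit Breuil-Kisin modules of Lubin-Tate characters from \cite{AGHM}. First I would base change to $L = \widehat{K_v^{\mathrm{ur}}}$, whose residue field is $\overline{\F}_p = \overline{k(v)}$, and fix the Breuil-Kisin ring $\mathfrak{S} = W[[u]]$ with its Frobenius and the Eisenstein polynomial $E(u)$ of a uniformizer of $L$. Since $\mathscr{X}$ has good reduction, the integral comparison theorem identifies the $F$-crystal $H^2_{\mathrm{cris}}(\mathscr{X}_{\overline{v}}/W)$ with the reduction modulo $u$, together with its induced Frobenius, of the Breuil-Kisin module $\mathfrak{M} := \mathfrak{M}(H^2_{\mathrm{\acute{e}t}}(X_{\overline{K_v}}, \Z_p))$; concretely, $\mathfrak{M}/u\mathfrak{M} = \mathfrak{M}\otimes_{\mathfrak{S}}W$ via $u\mapsto 0$, with the Frobenius on $\mathfrak{S}$ reducing to $\varphi$ on $W$. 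It therefore suffices to compute $\mathfrak{M}$ and the Frobenius on $\mathfrak{M}/u\mathfrak{M}$.

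Because the formation of Breuil-Kisin modules is functorial and compatible with direct sums, tensor products, and duals, I would apply it termwise to the decomposition of Proposition \ref{decomposition}, namely $H^2_{\mathrm{\acute{e}t}}(X_{\overline{K_v}}, \Z_p) \simeq \Z_p(-1)^{\oplus 22 - [E_{\p}:\Q_p]} \oplus (\O_{E_{\p}}, \widetilde{\chi})$ with $\widetilde{\chi} = \chi_{\iota}^{-1}\chi_{\iota\circ c}\chi_{\mathrm{cyc}}^{-1}$. The Breuil-Kisin module of $\Z_p(-1)$ is standard: its Frobenius is a unit multiple of $E(u)$, which reduces modulo $u$ to a unit multiple of $p$, so this summand contributes the $F$-crystal $W(-1)^{\oplus 22 - [E_{\p}:\Q_p]}$. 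The heart of the argument is the rank-one summand $(\O_{E_{\p}}, \widetilde{\chi})$. Here I would invoke \cite[Proposition 2.2.1]{AGHM}, which describes, for a Lubin-Tate character, a Breuil-Kisin module free of rank one over $\O_{E_{\p}}\otimes_{\Z_p}\mathfrak{S}$ whose Frobenius, read off in the decomposition according to the embeddings of $E_{\p, 0}$, carries a distinguished factor involving $E(u)$ and the uniformizer $\pi_{\p}$ only in the component attached to the given embedding, and units elsewhere. Combining the descriptions for $\chi_{\iota}$, $\chi_{\iota\circ c}$, and $\chi_{\mathrm{cyc}}$ through the tensor and dual compatibilities, and then reducing modulo $u$ (so that each $E(u)$ becomes a unit multiple of $p$ while the uniformizer factors $\pi_{\p}^{\pm1}$ survive), I would recover the Frobenius $\varphi(1)=\beta$ on $\O_{E_{\p}}\otimes_{\Z_p}W \simeq \prod_{0\le i\le d-1}W_i$.

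The placement of the indices is dictated by Proposition \ref{local_lemma}. Since $E_{\p}/F_{\q}$ is unramified, $d = [E_{\p, 0}:\Q_p]$ is even and the complex conjugation $c$ restricts on $E_{\p, 0}$ to the unique element of order two of $\mathrm{Gal}(E_{\p, 0}/\Q_p)\simeq \Z/d\Z$, i.e.\ to the shift by $d/2$. Hence the embeddings $\iota$ and $\iota\circ c$ occupy positions differing by $d/2$ in $\mathrm{Emb}(E_{\p, 0})$; combined with the shift by $1$ coming from the semilinear Frobenius, which carries the factor attached to $W_i$ into $W_{i+1}$, this forces the corrections $\pi_{\p}^{\pm1}$ to sit exactly at $i=1$ and $i=d'=d/2+1$ (with $d'\equiv 0$ when $d=2$), whereas the uniform factor $p$ produced by $\chi_{\mathrm{cyc}}^{-1}$ occupies every component. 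This yields the asserted values of the $\beta_i$, which are integral because $\nu(p\pi_{\p}^{-1})\ge 0$.

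I expect the principal difficulty to be the bookkeeping rather than any single conceptual step: one must reconcile the geometric-Frobenius normalization of the Lubin-Tate characters fixed in Section \ref{The Galois module} with the Frobenius convention on $\mathfrak{S}$ used in \cite{AGHM}, keep track of how the inverse in $\chi_{\iota}^{-1}$ and the cyclotomic twist distribute across the factors $W_i$, and confirm that the semilinear Frobenius carries the $E(u)$- and $\pi_{\p}$-factors to the claimed components after reduction modulo $u$, with the correct signs of the exponents. As a consistency check, the Newton slopes sum to $\sum_{i}\nu(\mathrm{Nm}_{E_{\p}/E_{\p, 0}}(\beta_i)) = [E_{\p}:\Q_p]$, equal to the rank, so the isocrystal is isoclinic of slope $1$; this matches the supersingularity of $\mathscr{X}_{\overline{v}}$ established in Theorem \ref{supersingular reduction}, since $\widetilde{\chi}$ is the product of the slope-zero character $\chi_{\iota}^{-1}\chi_{\iota\circ c}$ with $\chi_{\mathrm{cyc}}^{-1}$.
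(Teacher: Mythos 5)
Your proposal is correct and follows essentially the same route as the paper: it combines Proposition \ref{decomposition} with the AGHM description of Breuil--Kisin modules of Lubin--Tate characters (via the tensor/dual compatibilities of the Kisin functor, as in Proposition \ref{BK-module chi(-1)}) and then applies the Bhatt--Morrow--Scholze comparison, correctly accounting for the index shift by $1$ coming from the Frobenius twist $\varphi^{*}$ and for the shift by $d/2$ coming from $c$ acting on $E_{\p,0}$ (Proposition \ref{local_lemma}).
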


To prove Theorem \ref{crystalline cohomology1}, we use integral comparison theorem recently established by Bhatt, Morrow, and Scholze, which says that it is enough to calculate the Breuil-Kisin module
 $$
 \mathfrak{M}(H^2_{\rm{\acute{e}t}}(X_{\overline{K_v}}, \Z_{p}))
 $$
  associated with the Galois module $H^2_{\rm{\acute{e}t}}(X_{\overline{K_v}}, \Z_{p})$ to calculate the $F$-crystal $H^2_{\mathrm{cris}}(\mathscr{X}_{\overline{v}}/W)$; see Theorem \ref{crystalline cohomology BMS} for details.
 We compute this Breuil-Kisin module $\mathfrak{M}(H^2_{\rm{\acute{e}t}}(X_{\overline{K_v}}, \Z_{p}))$ using an explicit description of the Breuil-Kisin modules associated with Lubin-Tate characters proved by Andreatta, Goren, Howard, and Madapusi Pera; see Proposition \ref{BK-modules ass with Lubin-Tate} for details.

First, we recall the definition of Breuil-Kisin modules from \cite[Section 1.2]{Kisin}.
We fix a uniformizer $\pi_L \in L$, and a system $\{ \pi^{{1/p}^n}_L \}_{n\geq 0} \subset \overline{L}$ $(i \geq0)$ of $p^n$-th roots of $\pi_L$ such that $(\pi^{{1/p}^{n+1}}_L)^p=\pi^{{1/p}^n}_L$ for $n \geq0$.
Let $E(u) \in W[u]$ be the monic minimal polynomial of $\pi_L$ over $\mathrm{Frac}(W)$, 
i.e.\ it is the (monic) Eisenstein polynomial for $\pi_L$. We define a Frobenius $\varphi$ on 
$$\mathfrak{S}:=W[[u]]$$
 which acts on $W$ as the canonical Frobenius and sends $u$ to $u^p$.
 A \textit{Breuil-Kisin module} (over $\mathscr{O}_L$ with respect to $\{ \pi^{{1/p}^n}_L \}_{n \geq 0}$) is a free $\mathfrak{S}$-module $\mathfrak{M}$ of finite rank equipped with an isomorphism of $\mathfrak{S}$-modules
$$\varphi \colon (\varphi^{*}\mathfrak{M})[1/E(u)]\simeq\mathfrak{M}[1/E(u)].$$ 
Let $\mathscr{O}_{\mathcal{E}}$ be the $p$-adic completion of $\mathfrak{S}[1/u]$. This is a Cohen ring whose residue field is $\overline{\F}_p((u))$. We fix a separable closure $\overline{\F}_p((u))^{\mathrm{sep}}$ of $\overline{\F}_p((u))$. Let $\mathscr{O}_{\mathcal{E}_{\mathrm{ur}}}$ be the strict henselization of $\mathscr{O}_{\mathcal{E}}$ with respect to $\overline{\F}_p((u))^{\mathrm{sep}}$, and $\mathscr{O}_{\widehat{\mathcal{E}_{\mathrm{ur}}}}$ the $p$-adic completion of $\mathscr{O}_{\mathcal{E}_{\mathrm{ur}}}$. 
The Frobenius $\varphi$ on $\mathfrak{S}$ extends naturally to $\mathscr{O}_{\widehat{\mathcal{E}_{\mathrm{ur}}}}$.

We put
\[
L_{\infty}:=\bigcup_{n\geq0}L(\pi^{{1/p}^n}_L).
\]
By \cite[Corollaire 3.2.3]{Wintenberger}, there is an isomorphism 
\[
\mathrm{Gal}(\overline{L}/L_{\infty})\simeq\mathrm{Gal}(\overline{\F}_p((u))^{\mathrm{sep}}/\overline{\F}_p((u))).
\]
We have an action of $\mathrm{Gal}(\overline{L}/L_{\infty})$ on $\mathscr{O}_{\widehat{\mathcal{E}_{\mathrm{ur}}}}$ via this isomorphism.
\begin{thm}[Kisin {\cite{Kisin}}]\label{Kisin functor}
There exists a covariant fully faithful tensor functor 
\[
\Lambda \mapsto \mathfrak{M}(\Lambda)
\]
from the category of $\Z_p$-lattices in crystalline $\mathrm{Gal}(\overline{L}/L)$-representations to the category of Breuil-Kisin modules. The Breuil-Kisin module $\mathfrak{M}(\Lambda)$ is characterized by the existence of an isomorphism of $\mathscr{O}_{\widehat{\mathcal{E}_{\mathrm{ur}}}}$-modules
$$\Lambda\otimes_{\Z_p}{\mathscr{O}_{\widehat{\mathcal{E}_{\mathrm{ur}}}}} \simeq \mathfrak{M}(\Lambda)\otimes_{\mathfrak{S}}{\mathscr{O}_{\widehat{\mathcal{E}_{\mathrm{ur}}}}}$$
compatible with Frobenius morphisms and the action of $\mathrm{Gal}(\overline{L}/L_{\infty})$, where
\begin{itemize}
\item the Frobenius morphism on the left-hand side (resp.\ right-hand side) is given by $1\otimes\varphi$ (resp.\ $\varphi\otimes\varphi$), and
\item the action of an element $g \in \mathrm{Gal}(\overline{L}/L_{\infty})$ on the left-hand side (resp.\ right-hand side) is given by $g\otimes g$ (resp.\ $1\otimes g$).
\end{itemize}
\end{thm}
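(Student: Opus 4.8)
The plan is to deduce Theorem \ref{Kisin functor} from the classification of lattices in crystalline representations via $\varphi$-modules over $\mathfrak{S}$ established in \cite{Kisin}, whose proof I would organize as follows. The starting point is Fontaine's theory of \'etale $\varphi$-modules together with the field-of-norms isomorphism $\mathrm{Gal}(\overline{L}/L_{\infty})\simeq\mathrm{Gal}(\overline{\F}_p((u))^{\mathrm{sep}}/\overline{\F}_p((u)))$ of \cite{Wintenberger} recalled above. Restricting a crystalline $\mathrm{Gal}(\overline{L}/L)$-representation $\Lambda$ to $\mathrm{Gal}(\overline{L}/L_{\infty})$ and applying Fontaine's functor
\[
\Lambda \mapsto M(\Lambda):=(\Lambda\otimes_{\Z_p}\mathscr{O}_{\widehat{\mathcal{E}_{\mathrm{ur}}}})^{\mathrm{Gal}(\overline{L}/L_{\infty})}
\]
produces an \'etale $\varphi$-module over $\mathscr{O}_{\mathcal{E}}$, and this assignment is an equivalence onto the category of \'etale $\varphi$-modules. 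The characterizing isomorphism in the statement says precisely that $\mathfrak{M}(\Lambda)$ must be a $\varphi$-stable $\mathfrak{S}$-lattice inside $M(\Lambda)$ on which $\varphi$ becomes an isomorphism after inverting $E(u)$. Thus the whole theorem reduces to producing, functorially in the crystalline $\Lambda$, such a distinguished Breuil--Kisin lattice in $M(\Lambda)$, and to checking its freeness, full faithfulness, and tensor compatibility.

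To construct the lattice I would first work rationally. Set $V:=\Lambda[1/p]$ and attach to it its filtered $\varphi$-module $D:=D_{\mathrm{cris}}(V)$ over $\mathrm{Frac}(W)$. Following \cite{Kisin}, one builds from $D$ a $\varphi$-module $\mathcal{M}(D)$ of finite $E$-height over the ring $\mathcal{O}:=\mathfrak{S}[1/p]$, essentially by descending the filtration on $D$ along the specialization $u\mapsto \pi_L$ and solving a $\varphi$-fixed-point problem; the functor $D\mapsto\mathcal{M}(D)$ is then shown to be fully faithful, the analytic heart being the study of $\varphi$-modules over $\mathcal{O}$ and over $\mathcal{O}[1/\lambda]$, where $\lambda:=\prod_{n\geq0}\varphi^{n}(E(u)/E(0))$. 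This is exactly the step in which the weak admissibility of $D$ is converted into a genuine $\varphi$-module structure, so that the full force of $p$-adic Hodge theory (the coincidence of weak admissibility and admissibility) enters.

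Having $\mathcal{M}(D)$ rationally, I would obtain the integral Breuil--Kisin module by intersecting it with the \'etale lattice: inside the common $\mathcal{E}$-vector space $M(\Lambda)[1/p]\simeq\mathcal{M}(D)\otimes_{\mathcal{O}}\mathscr{O}_{\mathcal{E}}[1/p]$ one sets
\[
\mathfrak{M}(\Lambda):=\mathcal{M}(D)\cap M(\Lambda),
\]
and then proves that $\mathfrak{M}(\Lambda)$ is a free $\mathfrak{S}$-module of rank $\rank_{\Z_p}\Lambda$ on which $\varphi$ induces an isomorphism after inverting $E(u)$, and that $\mathfrak{M}(\Lambda)\otimes_{\mathfrak{S}}\mathscr{O}_{\mathcal{E}}\simeq M(\Lambda)$. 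Tensor compatibility follows from the compatibility of the three functors $D_{\mathrm{cris}}$, $\mathcal{M}$, and Fontaine's $M$ with tensor products, while full faithfulness of $\Lambda\mapsto\mathfrak{M}(\Lambda)$ follows by combining full faithfulness of $D\mapsto\mathcal{M}(D)$ with the fact that $M(\Lambda)$ recovers $\Lambda|_{\mathrm{Gal}(\overline{L}/L_{\infty})}$ through Fontaine's equivalence, together with the crystalline descent datum that distinguishes $\mathfrak{M}(\Lambda)$.

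The hard part will be the rational construction and full faithfulness of $D\mapsto\mathcal{M}(D)$, and the freeness of $\mathfrak{M}(\Lambda)$ over $\mathfrak{S}=W[[u]]$: since $\mathfrak{S}$ is a two-dimensional regular local ring rather than a discrete valuation ring, one cannot exhibit a basis directly, and one must control the $\varphi$-module structure over $\mathcal{O}[1/\lambda]$ together with the behaviour of the lattice at the closed point $u=0$. These are precisely the analytic and $p$-adic Hodge-theoretic inputs supplied by \cite{Kisin}; for the purposes of this paper I would simply invoke that reference for the existence, full faithfulness, and tensor property of the functor $\Lambda\mapsto\mathfrak{M}(\Lambda)$, since what is actually needed in the sequel is only the explicit computation of $\mathfrak{M}(H^2_{\rm{\acute{e}t}}(X_{\overline{K_v}}, \Z_{p}))$ via the Lubin--Tate description of Proposition \ref{decomposition}.
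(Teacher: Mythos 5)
You end by doing exactly what the paper does: the paper's entire proof is the citation of \cite[Theorem 1.2.1]{Kisin} together with the remark that the characterizing isomorphism follows from the construction of the functor, and your proposal likewise invokes that reference for existence, full faithfulness, and the tensor property, after correctly identifying the characterization as saying that $\mathfrak{M}(\Lambda)$ is the distinguished finite-$E(u)$-height $\mathfrak{S}$-lattice inside Fontaine's $\'e$tale $\varphi$-module $M(\Lambda)$. Your supplementary sketch of Kisin's argument is accurate in outline and is not needed beyond the citation (the one slip is that Kisin's ring $\mathcal{O}$ is the ring of rigid-analytic functions on the open unit disk, which properly contains $\mathfrak{S}[1/p]$ rather than equalling it).
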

\begin{proof}
See \cite[Theorem 1.2.1]{Kisin}. The characterization follows from the construction of the functor. See also \cite[Theorem 4.4]{BMS}.  
\end{proof}

We shall give an explicit description of the Breuil-Kisin module $\mathfrak{M}(H^2_{\rm{\acute{e}t}}(X_{\overline{K_v}}, \Z_{p}))$. By Proposition \ref{decomposition}, we have an isomorphism of $\mathrm{Gal}(\overline{L}/L)$-modules $$H^2_{\rm{\acute{e}t}}(X_{\overline{K_v}}, \Z_{p})\simeq \Z_p(-1)^{\oplus22-[E_{\p}:\Q_p]} \oplus ({\O_{E_{\p}}}, \widetilde{\chi}).$$ 
Hence we have an isomorphism of Breuil-Kisin modules
$$\mathfrak{M}(H^2_{\rm{\acute{e}t}}(X_{\overline{K_v}}, \Z_{p}))\simeq \mathfrak{M}(\Z_p(-1))^{\oplus22-[E_{\p}:\Q_p]} \oplus \mathfrak{M}(({\O_{E_{\p}}}, \widetilde{\chi})).$$ 
 
The Breuil-Kisin module $\mathfrak{M}(\Z_p(-1))$ is described as follows. It is identified with $\mathfrak{S}$ as an $\mathfrak{S}$-module, and the Frobenius $\varphi$ is defined by $\varphi(1)=pE(u)/E(0)$. 
Thus, to describe $\mathfrak{M}(H^2_{\rm{\acute{e}t}}(X_{\overline{K_v}}, \Z_{p}))$, it is enough to compute $\mathfrak{M}(({\O_{E_{\p}}}, \widetilde{\chi}))$.

We have a decomposition 
\[
\O_{E_{\p}}\otimes_{\Z_p}\mathfrak{S}\simeq\underset{0\leq i \leq d-1}{\prod}\mathfrak{S}_i,
\]
where we put 
\[
\mathfrak{S}_i:=W_i[[u]]
\]
for $0\leq i \leq d-1$. Let 
\[
E_0(u) \in W_0[u]
\] be the monic minimal polynomial of $\pi_L$ over $\mathrm{Frac}(W_0)$, where we embed $W_0$ into $L$ by the embedding
\[
\xymatrix{
W_0=\O_{E_{\p}}\otimes_{R_0}{W}\ar[r]&L, &{x\otimes y \mapsto \iota(x)y}. 
}
\]

By Proposition \ref{local_lemma}, the involution $c$ is non-trivial on $E_{\p, 0}$. Hence the embedding $\iota\circ{c} \colon E_{\p}\hookrightarrow L$
induces the embedding
$
\varphi^{d/2}\circ\iota_0 \in \mathrm{Emb}(E_{\p, 0}).
$
Let 
\[
E_{d/2}(u) \in W_{d/2}[u]
\] be the monic minimal polynomial of $\pi_L$ over $\mathrm{Frac}(W_{d/2})$, where we embed $W_{d/2}$ into $L$ by the embedding

\[
\xymatrix{
W_{d/2}=\O_{E_{\p}}\otimes_{R_{d/2}}{W}\ar[r]&L, &{x\otimes y \mapsto (\iota\circ{c})(x)y}.
}
\]

For a character $\chi\colon \mathrm{Gal}(\overline{K_v}/K_v) \rightarrow \mathscr{O}^{\times}_{E_{\p}}$, let $({\O_{E_{\p}}}, \chi)^{\vee}$ be a $\mathrm{Gal}(\overline{K_v}/K_v)$-module
\[ 
\O^{\vee}_{E_{\p}}:=\mathrm{Hom}_{\Z_p}(\O_{E_{\p}}, \Z_p)
\]
induced by $({\O_{E_{\p}}}, \chi)$.

We shall recall an explicit description of the Breuil-Kisin module $\mathfrak{M}(({\O_{E_{\p}}}, \chi_{\iota})^{\vee})$
(resp.\ $\mathfrak{M}(({\O_{E_{\p}}}, \chi_{\iota\circ c})^{\vee})$)
associated with $({\O_{E_{\p}}}, \chi_{\iota})^{\vee}$ (resp.\ (${\O_{E_{\p}}}, \chi_{\iota\circ c})^{\vee}$).
We remark that the Breuil-Kisin module $\mathfrak{M}(({\O_{E_{\p}}}, \chi_{\iota})^{\vee})$
(resp.\ $\mathfrak{M}(({\O_{E_{\p}}}, \chi_{\iota\circ c})^{\vee})$) admits an $\O_{E_{\p}}$-action
such that the Frobenius commutes with the $\O_{E_{\p}}$-action since $\mathfrak{M}(-)$ is a functor.

For an element
$
\gamma \in \O_{E_{\p}}\otimes_{\Z_p}\mathfrak{S},
$ we define a Frobenius semi-linear map $\varphi_\gamma$ on $\O_{E_{\p}}\otimes_{\Z_p}\mathfrak{S}$ which commutes with the $\mathscr{O}_{E_{\p}}$-action by $\varphi_\gamma(1)=\gamma$.  We denote the $\mathfrak{S}$-module $\O_{E_{\p}}\otimes_{\Z_p}\mathfrak{S}$ equipped with the Frobenius semi-linear map $\varphi_\gamma$ by
\[
(\O_{E_{\p}}\otimes_{\Z_p}\mathfrak{S}, \gamma).
\]

\begin{prop}[Andreatta, Goren, Howard, Madapusi Pera {\cite[Proposition 2.2.1]{AGHM}}]\label{BK-modules ass with Lubin-Tate}
\quad
\begin{enumerate}
\item[(1)] The Breuil-Kisin module 
\[
\mathfrak{M}(({\O_{E_{\p}}}, \chi_{\iota})^{\vee})
\]
 is $\O_{E_{\p}}$-equivariantly isomorphic to the Breuil-Kisin module $(\O_{E_{\p}}\otimes_{\Z_p}\mathfrak{S}, \beta_{\iota})$, where the element 
 \[
 \beta_{\iota} \in \O_{E_{\p}}\otimes_{\Z_p}\mathfrak{S}\simeq\underset{0\leq i \leq d-1}{\prod}\mathfrak{S}_i
 \] is specified as follows; the $i$-th component $\beta_{\iota, i}$ of $\beta_{\iota}$ is 
$$
\beta_{\iota, i}=
\begin{cases}
\pi_{{\p}}E_0(u)/E_0(0) &(i=0)\\
1 &(i \neq 0).
\end{cases}
$$

\item[(2)] The Breuil-Kisin module 
\[
\mathfrak{M}(({\O_{E_{\p}}}, \chi_{\iota\circ c})^{\vee})
\]
 is $\O_{E_{\p}}$-equivariantly isomorphic to the Breuil-Kisin module $(\O_{E_{\p}}\otimes_{\Z_p}\mathfrak{S}, \beta_{\iota\circ c})$, where the element  
\[
\beta_{\iota\circ c}\in \O_{E_{\p}}\otimes_{\Z_p}\mathfrak{S}\simeq\underset{0\leq i \leq d-1}{\prod}\mathfrak{S}_i
\] is specified as follows; the $i$-th component $\beta_{\iota\circ c, i}$ of $\beta_{\iota\circ c}$ is 
\[
\beta_{\iota\circ c, i}=
\begin{cases}
\pi_{{\p}}E_{d/2}(u)/E_{d/2}(0) &(i=d/2)\\
1 &(i \neq d/2).
\end{cases}
\]
\end{enumerate}
\end{prop}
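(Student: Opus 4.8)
The statement is precisely \cite[Proposition 2.2.1]{AGHM}, so the plan is to reduce our assertion to theirs and then to check that the normalizations (the choice of $\pi_{\p}$, the $\Z_p$-dual, and Deligne's normalization of class field theory) match. First I would dispose of part (2) by deducing it from part (1). The character $\chi_{\iota\circ c}$ is the Lubin-Tate character attached to $\iota\circ c$, and by Proposition \ref{local_lemma} the involution $c$ is non-trivial on $E_{\p, 0}$, so $\iota\circ c$ induces $\varphi^{d/2}\circ\iota_0\in\mathrm{Emb}(E_{\p, 0})$ rather than $\iota_0$. Running the computation of (1) with the distinguished embedding shifted by $d/2$ moves the non-trivial component from $i=0$ to $i=d/2$ and replaces $E_0(u)$ by $E_{d/2}(u)$, which is exactly the content of (2). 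Hence it suffices to treat (1).

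For (1), the starting point is Remark \ref{Lubin-Tate module}: the Galois module $(\O_{E_{\p}},\chi_{\iota})$ is the $p$-adic Tate module of $\mathcal{F}_{\pi_{\p},\O_{K_v}}$, hence of the associated $p$-divisible group over $\O_L$. Its $\Z_p$-dual $(\O_{E_{\p}},\chi_{\iota})^{\vee}$ — which is the module that actually enters the cohomological description in the next section — is therefore a $\Z_p$-lattice in a crystalline representation, so Kisin's functor of Theorem \ref{Kisin functor} is defined on it. The plan is to compute $\mathfrak{M}((\O_{E_{\p}},\chi_{\iota})^{\vee})$ directly on the $p$-divisible group side, using that the Breuil-Kisin module attached to the Tate module of a $p$-divisible group is its Dieudonn\'e-type module in Kisin's classification.

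The computation is governed by the $\O_{E_{\p}}$-action. Since $\mathfrak{M}(-)$ is a tensor functor, $\mathfrak{M}((\O_{E_{\p}},\chi_{\iota})^{\vee})$ is a module over $\O_{E_{\p}}\otimes_{\Z_p}\mathfrak{S}\simeq\prod_{0\le i\le d-1}\mathfrak{S}_i$, free of rank one over each factor $\mathfrak{S}_i$, and its Frobenius is $\O_{E_{\p}}$-linear and $\varphi$-semilinear, hence shifts the factors cyclically $i\mapsto i+1$ and is determined by the single element $\beta_{\iota}=\varphi(1)$. Because the Lubin-Tate group is one-dimensional and its cotangent line is the one on which $\O_{E_{\p}}$ acts through the structural embedding $\iota$, the Hodge filtration is concentrated in the factor indexed by $\iota_0$, i.e.\ $i=0$; away from this factor the module is of \'etale type and $\beta_{\iota, i}$ is a unit, normalized to $1$. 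At $i=0$ the non-trivial filtration forces the Eisenstein factor $E_0(u)$, and $\O_{E_{\p}}$-linearity together with the fact that the Frobenius of the Lubin-Tate group is the lift $[\pi_{\p}]$ of the $\pi_{\p}$-power Frobenius pins down the remaining unit, yielding $\beta_{\iota, 0}=\pi_{\p}E_0(u)/E_0(0)$; the constant $E_0(0)$ is the normalization forced by comparison with $\mathfrak{M}(\Z_p(-1))$, whose Frobenius is $pE(u)/E(0)$.

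The step I expect to be the main obstacle — and the genuine content of \cite[Proposition 2.2.1]{AGHM} — is the verification that the characterizing isomorphism of Theorem \ref{Kisin functor},
\[
(\O_{E_{\p}},\chi_{\iota})^{\vee}\otimes_{\Z_p}\mathscr{O}_{\widehat{\mathcal{E}_{\mathrm{ur}}}}\simeq \mathfrak{M}((\O_{E_{\p}},\chi_{\iota})^{\vee})\otimes_{\mathfrak{S}}\mathscr{O}_{\widehat{\mathcal{E}_{\mathrm{ur}}}},
\]
can be chosen compatibly with both the Frobenius and the action of $\mathrm{Gal}(\overline{L}/L_{\infty})$. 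Concretely, the hard part will be to compute the restriction of $\chi_{\iota}$ to $\mathrm{Gal}(\overline{L}/L_{\infty})$ through Wintenberger's field-of-norms isomorphism $\mathrm{Gal}(\overline{L}/L_{\infty})\simeq\mathrm{Gal}(\overline{\F}_p((u))^{\mathrm{sep}}/\overline{\F}_p((u)))$ and to match it, factor by factor, with the $u$-adic structure of $\prod_i\mathfrak{S}_i$; the explicit coordinates on the Lubin-Tate tower are what make this matching possible. Once this equivariant comparison is in place, Theorem \ref{Kisin functor} identifies the candidate $(\O_{E_{\p}}\otimes_{\Z_p}\mathfrak{S},\beta_{\iota})$ with $\mathfrak{M}((\O_{E_{\p}},\chi_{\iota})^{\vee})$, and part (2) follows by the shift described above.
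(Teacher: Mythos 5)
Your proposal is correct and takes essentially the same route as the paper: the paper's proof is simply a citation of \cite[Proposition 2.2.1]{AGHM} together with the observation that their Eisenstein polynomial $\mathcal{E}_{\iota_0}(u)$ (normalized to have constant term $\iota_0(\pi_{\p})$) translates to $\pi_{\p}E_0(u)/E_0(0)$ in (1) and $\pi_{\p}E_{d/2}(u)/E_{d/2}(0)$ in (2), which is exactly the normalization-matching you identify as the main point. Your additional sketch of the internal mechanics of the AGHM argument (Lubin-Tate Tate module, concentration of the Hodge filtration at the factor $i=0$, the field-of-norms comparison) is consistent but not needed, since the statement is attributed to AGHM.
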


\begin{proof}
See \cite[Proposition 2.2.1]{AGHM}. We note that the polynomial $\mathcal{E}_{\iota_0}(u) \in W_{\iota_0}[u]$ in the notation of \cite[Section 2]{AGHM} is a minimal polynomial of $\varpi$ whose constant term is equal to a fixed uniformizer $\iota_0(\pi_{E})$.
In our notation, the polynomial $\mathcal{E}_{\iota_0}(u)$ is translated to $\pi_{{\p}}E_0(u)/E_0(0)$ in (1) and $\pi_{{\p}}E_{d/2}(u)/E_{d/2}(0)$ in (2).
\end{proof}

\begin{prop}\label{BK-module chi(-1)}
The Breuil-Kisin module $\mathfrak{M}(({\O_{E_{\p}}}, \widetilde{\chi}))$ is $\O_{E_{\p}}$-equivariantly isomorphic to the Breuil-Kisin module $(\O_{E_{\p}}\otimes_{\Z_p}\mathfrak{S}, \widetilde{\beta})$, where the element
\[
\widetilde{\beta}\in \O_{E_{\p}}\otimes_{\Z_p}\mathfrak{S}\simeq\underset{0\leq i \leq d-1}{\prod}\mathfrak{S}_i\]
 is specified as follows; the $i$-th component $\widetilde{\beta}_{i}$ of $\widetilde{\beta}$ is 
\[
\widetilde{\beta}_{i}=
\begin{cases}
p(E(u)/E(0))(\pi_{\p}E_0(u)/E_0(0)) &(i=0)\\
p(E(u)/E(0))(\pi_{{\p}}E_{d/2}(u)/E_{d/2}(0))^{-1} &(i=d/2)\\
pE(u)/E(0) &(i \neq 0, d/2).
\end{cases}
\]

\end{prop}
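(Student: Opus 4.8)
The plan is to recognize $(\O_{E_{\p}}, \widetilde{\chi})$ as a tensor product of representations whose Breuil-Kisin modules are already known, and to transport the identity $\widetilde{\chi}=\chi^{-1}_{\iota}\chi_{\iota\circ c}\chi^{-1}_{\mathrm{cyc}}$ through the tensor functor $\mathfrak{M}(-)$ of Theorem \ref{Kisin functor}. Since $(\O_{E_{\p}}, \chi)^{\vee}$ is $\O_{E_{\p}}$-equivariantly isomorphic to $(\O_{E_{\p}}, \chi^{-1})$ (the trace pairing identifies $\mathrm{Hom}_{\Z_p}(\O_{E_{\p}}, \Z_p)$ with a free rank-one $\O_{E_{\p}}$-module on which $\mathrm{Gal}(\overline{K_v}/K_v)$ acts by $\chi^{-1}$, the identification being Galois-equivariant because the pairing commutes with the $\O_{E_{\p}}$-action), the defining formula for $\widetilde{\chi}$ yields an $\O_{E_{\p}}$-equivariant isomorphism of $\mathrm{Gal}(\overline{K_v}/K_v)$-modules
\[
(\O_{E_{\p}}, \widetilde{\chi})\simeq (\O_{E_{\p}}, \chi_{\iota})^{\vee}\otimes_{\O_{E_{\p}}}(\O_{E_{\p}}, \chi_{\iota\circ c})\otimes_{\Z_p}\Z_p(-1).
\]
Here the first factor carries $\chi^{-1}_{\iota}$, the second carries $\chi_{\iota\circ c}$, and tensoring with $\Z_p(-1)$ over $\Z_p$ supplies the factor $\chi^{-1}_{\mathrm{cyc}}$. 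All three factors are crystalline (the Lubin-Tate characters arise from $p$-divisible groups with good reduction by Remark \ref{Lubin-Tate module}, and $\Z_p(-1)$ is the dual of $\Z_p(1)$), so after restricting to $\mathrm{Gal}(\overline{L}/L)$ we may apply $\mathfrak{M}(-)$.

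Since $\mathfrak{M}(-)$ is a tensor functor, it carries the displayed decomposition to an $\O_{E_{\p}}$-equivariant isomorphism
\[
\mathfrak{M}((\O_{E_{\p}}, \widetilde{\chi}))\simeq \mathfrak{M}((\O_{E_{\p}}, \chi_{\iota})^{\vee})\otimes\mathfrak{M}((\O_{E_{\p}}, \chi_{\iota\circ c}))\otimes\mathfrak{M}(\Z_p(-1))
\]
(with tensor products formed over the base rings $\O_{E_{\p}}\otimes_{\Z_p}\mathfrak{S}$ and $\mathfrak{S}$ respectively), and I now evaluate the three factors. The first is given directly by Proposition \ref{BK-modules ass with Lubin-Tate} (1): it is $(\O_{E_{\p}}\otimes_{\Z_p}\mathfrak{S}, \beta_{\iota})$. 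For the second, I use that a tensor functor preserves duals, so $\mathfrak{M}((\O_{E_{\p}}, \chi_{\iota\circ c}))$ is the Breuil-Kisin dual of $\mathfrak{M}((\O_{E_{\p}}, \chi_{\iota\circ c})^{\vee})=(\O_{E_{\p}}\otimes_{\Z_p}\mathfrak{S}, \beta_{\iota\circ c})$ from Proposition \ref{BK-modules ass with Lubin-Tate} (2). Because this module is free of rank one over $\O_{E_{\p}}\otimes_{\Z_p}\mathfrak{S}$ with Frobenius $\varphi(1)=\beta_{\iota\circ c}$, its dual is $(\O_{E_{\p}}\otimes_{\Z_p}\mathfrak{S}, \beta^{-1}_{\iota\circ c})$; equivalently, applying $\mathfrak{M}(-)$ to $(\O_{E_{\p}}, \chi_{\iota\circ c})\otimes_{\O_{E_{\p}}}(\O_{E_{\p}}, \chi_{\iota\circ c})^{\vee}\simeq \O_{E_{\p}}$ and using that the Breuil-Kisin module of the trivial representation is $(\O_{E_{\p}}\otimes_{\Z_p}\mathfrak{S}, 1)$ shows that the two Frobenii multiply to $1$, forcing $\varphi(1)=\beta^{-1}_{\iota\circ c}$. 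Finally, $\mathfrak{M}(\Z_p(-1))=(\mathfrak{S}, pE(u)/E(0))$ as recalled above.

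It remains to assemble the Frobenius. The tensor product of these rank-one modules over $\O_{E_{\p}}\otimes_{\Z_p}\mathfrak{S}$ (after base-changing the factor $\mathfrak{M}(\Z_p(-1))$ along $\mathfrak{S}\to\O_{E_{\p}}\otimes_{\Z_p}\mathfrak{S}$) is again free of rank one, with Frobenius element the product of the three Frobenii; since $pE(u)/E(0)$ lies in $\mathfrak{S}$ it contributes this factor to every component. Writing everything in the decomposition $\O_{E_{\p}}\otimes_{\Z_p}\mathfrak{S}\simeq\prod_{0\le i\le d-1}\mathfrak{S}_i$, I obtain $\widetilde{\beta}=(pE(u)/E(0))\,\beta_{\iota}\,\beta^{-1}_{\iota\circ c}$ component-wise; inserting the explicit $\beta_{\iota}$ (trivial away from $i=0$) and $\beta^{-1}_{\iota\circ c}$ (trivial away from $i=d/2$) from Proposition \ref{BK-modules ass with Lubin-Tate} reproduces exactly the three cases of the claimed formula. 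The only real obstacle is the compatibility of the $\O_{E_{\p}}$-linear structure with the tensor functor: one must verify that for these rank-one $\O_{E_{\p}}$-linear crystalline representations $\mathfrak{M}(-)$ lands in modules free of rank one over $\O_{E_{\p}}\otimes_{\Z_p}\mathfrak{S}$ and converts $\otimes_{\O_{E_{\p}}}$ into $\otimes_{\O_{E_{\p}}\otimes_{\Z_p}\mathfrak{S}}$ with multiplicative Frobenius, and that the dual is taken with respect to the correct unit object $(\O_{E_{\p}}\otimes_{\Z_p}\mathfrak{S}, 1)$. Once this is in place, the remaining computation is the elementary component-wise bookkeeping above.
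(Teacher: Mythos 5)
Your proposal is correct and follows essentially the same route as the paper: both express $(\O_{E_{\p}},\widetilde{\chi})$ as a tensor product of Lubin--Tate factors and $\Z_p(-1)$ via the identification $(\O_{E_{\p}},\chi)^{\vee}\simeq(\O_{E_{\p}},\chi^{-1})$ from the trace pairing, pass through the tensor functor $\mathfrak{M}(-)$ using its characterization over $\mathscr{O}_{\widehat{\mathcal{E}_{\mathrm{ur}}}}$, and multiply the Frobenii from Proposition \ref{BK-modules ass with Lubin-Tate} and $\mathfrak{M}(\Z_p(-1))=(\mathfrak{S},pE(u)/E(0))$. The only cosmetic difference is that you split into three factors and invert $\beta_{\iota\circ c}$ by a duality/unit-object argument, whereas the paper groups $\chi_{\iota\circ c}\chi_{\mathrm{cyc}}^{-1}$ into a single factor first; the compatibility of $\otimes_{\O_{E_{\p}}}$ with $\mathfrak{M}(-)$ that you flag is exactly the point the paper settles by checking that the candidate tensor product is a free $\mathfrak{S}$-module and invoking the uniqueness in Theorem \ref{Kisin functor}.
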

\begin{proof}
By an isomorphism of $\mathrm{Gal}(\overline{K_v}/K_v)$-modules
$$(\O_{E_{\p}}, \chi^{-1}_{\iota})[1/p] \simeq ({\O_{E_{\p}}}, \chi_{\iota})^{\vee}[1/p]$$
induced by the trace map $\mathrm{Tr}_{E_{\p}/\Q_p}\colon E_{\p} \rightarrow \Q_p$,
the $\O_{E_{\p}}$-lattice $\O^{\vee}_{E_{\p}}$ on the right-hand side is translated to the $\O_{E_{\p}}$-lattice
$
\pi^{-d}_{\p}\O_{E_{\p}} \subset (\O_{E_{\p}}, \chi^{-1}_{\iota})[1/p]
$
for an integer $d \geq 0$.
It follows that there exists an isomorphism of $\mathrm{Gal}(\overline{K_v}/K_v)$-modules
\[
(\O_{E_{\p}}, \chi^{-1}_{\iota}) \simeq ({\O_{E_{\p}}}, \chi_{\iota})^{\vee}.
\]
Thus, the $\mathrm{Gal}(\overline{L}/L)$-module $(\O_{E_{\p}}, \widetilde{\chi})$ is isomorphic to the $\mathrm{Gal}(\overline{L}/L)$-module 
\[
(\O_{E_{\p}}, \chi_{\iota\circ c}{\chi^{-1}_{\mathrm{cyc}}}) \otimes_{\O_{E_{\p}}}({\O_{E_{\p}}}, \chi_{\iota})^{\vee}.\]

We shall show that the Breuil-Kisin module $\mathfrak{M}((\O_{E_{\p}}, \widetilde{\chi}))$ associated with $(\O_{E_{\p}}, \widetilde{\chi})$ is isomorphic to the tensor product
\[
\mathfrak{M}:=\mathfrak{M}((\O_{E_{\p}}, \chi_{\iota\circ c}{\chi^{-1}_{\mathrm{cyc}}}))\otimes_{\O_{E_{\p}}\otimes_{\Z_p}\mathfrak{S}}\mathfrak{M}(({\O_{E_{\p}}}, \chi_{\iota})^{\vee}).\]
Since $\mathfrak{M}(({\O_{E_{\p}}}, \chi_{\iota})^{\vee})$ is a free $\O_{E_{\p}}\otimes_{\Z_p}\mathfrak{S}$-module of rank $1$ by Proposition \ref{BK-modules ass with Lubin-Tate} (1), the tensor product $\mathfrak{M}$ is a free $\mathfrak{S}$-module of finite rank.
Thus, we see that $\mathfrak{M}$ is a Breuil-Kisin module.
By the characterization of $\mathfrak{M}((\O_{E_{\p}}, \widetilde{\chi}))$ as in Theorem \ref{Kisin functor}, we have $\mathfrak{M}((\O_{E_{\p}}, \widetilde{\chi})) \simeq \mathfrak{M}$.

Using an isomorphism $\O_{E_{\p}}\simeq{\O^{\vee}_{E_{\p}}}$ as above, Proposition \ref{BK-modules ass with Lubin-Tate} (2), and the fact that the functor $\mathfrak{M}(-)$ is a tensor functor, we see that the Breuil-Kisin module $$\mathfrak{M}((\O_{E_{\p}}, \chi_{\iota\circ c}{\chi^{-1}_{\mathrm{cyc}}}))$$ is $\O_{E_{\p}}$-equivariantly isomorphic to the  Breuil-Kisin module $(\O_{E_{\p}}\otimes_{\Z_p}\mathfrak{S}, \beta')$, where the element
\[
\beta'\in\O_{E_{\p}}\otimes_{\Z_p}\mathfrak{S}\simeq\underset{0\leq i \leq d-1}{\prod}\mathfrak{S}_i
\] 
is specified as follows; the $i$-th component $\beta'_{i}$ of $\beta'$ is 
\[
\beta'_{i}=
\begin{cases}
p(E(u)/E(0))(\pi_{{\p}}E_{d/2}(u)/E_{d/2}(0))^{-1} &(i=d/2)\\
pE(u)/E(0) &(i \neq d/2).
\end{cases}
\]
By using this description, the assertion follows from the isomorphism $\mathfrak{M}((\O_{E_{\p}}, \widetilde{\chi})) \simeq \mathfrak{M}$ and Proposition \ref{BK-modules ass with Lubin-Tate} (1).
\end{proof}

Now, Theorem \ref{crystalline cohomology1} follows from Proposition \ref{BK-module chi(-1)} and the following integral comparison theorem established by Bhatt, Morrow, and Scholze.

\begin{thm}[Bhatt, Morrow, Scholze {\cite[Theorem 1.2]{BMS0}, \cite[Theorem 14.6]{BMS}}]\label{crystalline cohomology BMS}
There is an isomorphism of $F$-crystals
$$H^2_{\mathrm{cris}}(\mathscr{X}_{\overline{v}}/W)\simeq \varphi^{*}(\mathfrak{M}(H^2_{\rm{\acute{e}t}}(X_{\overline{K_v}}, \Z_{p}))/u\mathfrak{M}(H^2_{\rm{\acute{e}t}}(X_{\overline{K_v}}, \Z_{p}))).$$
\end{thm}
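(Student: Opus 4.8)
The final statement is the integral comparison theorem of Bhatt, Morrow, and Scholze, which the paper imports as an external input; accordingly my proposal is not to reprove it \emph{ab initio} but to indicate the mechanism that delivers it. The natural route is through $A_{\mathrm{inf}}$-cohomology. After base changing the smooth proper good-reduction model $\mathscr{X}$ to $\O_{\C_p}$, where $\C_p=\widehat{\overline{K_v}}$, and forming the tilt $\O_{\C_p}^{\flat}$, one constructs a perfect complex $R\Gamma_{A_{\mathrm{inf}}}(\mathscr{X}_{\O_{\C_p}})$ over Fontaine's period ring $A_{\mathrm{inf}}=W(\O_{\C_p}^{\flat})$, together with a Frobenius $\varphi$. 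The complex is built by applying the d\'ecalage functor $L\eta_{\mu}$ (with $\mu=[\epsilon]-1$) to the pushforward of the pro-\'etale structure along the projection to the Zariski site; this $L\eta_{\mu}$-twist is precisely the device that forces the several specializations to recover all the standard cohomologies of $\mathscr{X}$ simultaneously and \emph{integrally}, rather than merely after inverting $p$.

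I would then record the structures that pin the object down. Inverting $\mu$ together with a generator $\xi$ of $\ker(\theta\colon A_{\mathrm{inf}}\to\O_{\C_p})$ recovers $p$-adic \'etale cohomology, while reduction modulo $\xi$ recovers de Rham cohomology; these identify the $A_{\mathrm{inf}}$-cohomology with an object manufactured from $H^2_{\mathrm{\acute{e}t}}(X_{\overline{K_v}},\Z_p)$. The essential point for Theorem \ref{crystalline cohomology BMS} is that this $A_{\mathrm{inf}}$-cohomology descends, along the Frobenius-equivariant embedding $\mathfrak{S}=W[[u]]\hookrightarrow A_{\mathrm{inf}}$ sending $u\mapsto[\pi_L^{\flat}]$, to a Breuil--Kisin module over $\mathfrak{S}$. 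By the characterization in Kisin's functor (Theorem \ref{Kisin functor}) this descended module must be $\mathfrak{M}(H^2_{\mathrm{\acute{e}t}}(X_{\overline{K_v}},\Z_p))$, since the two agree after $\otimes_{\mathfrak{S}}\O_{\widehat{\mathcal{E}_{\mathrm{ur}}}}$ and the functor is fully faithful. Finally, the crystalline comparison identifies the mod-$u$ reduction of this Breuil--Kisin module, after the Frobenius twist $\varphi^{*}$, with $H^2_{\mathrm{cris}}(\mathscr{X}_{\overline{v}}/W)$, which is exactly the asserted isomorphism.

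The genuinely hard part --- and the reason the paper cites rather than proves this --- is the construction and the integral finiteness of $A_{\mathrm{inf}}$-cohomology, together with the proof that its crystalline specialization is an isomorphism of $F$-crystals integrally. This rests on the full apparatus of perfectoid geometry, the almost purity theorem, and the fine behaviour of $L\eta_{\mu}$, and on the field-of-norms identification $\mathrm{Gal}(\overline{L}/L_{\infty})\simeq\mathrm{Gal}(\overline{\F}_p((u))^{\mathrm{sep}}/\overline{\F}_p((u)))$ already invoked before Theorem \ref{Kisin functor}. Granting the Bhatt--Morrow--Scholze machinery, the work that remains on the paper's side is purely bookkeeping: feeding the explicit $\mathfrak{M}(H^2_{\mathrm{\acute{e}t}}(X_{\overline{K_v}},\Z_p))$ of Proposition \ref{BK-module chi(-1)} into the mod-$u$ reduction and the Frobenius twist, and checking that the three distinguished components assemble into the claimed element $\beta$ --- which is the computation used to deduce Theorem \ref{crystalline cohomology1}.
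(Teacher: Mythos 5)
Your proposal is correct in substance and takes the same route as the paper: the paper does not reprove the Bhatt--Morrow--Scholze comparison but simply cites \cite[Theorem 14.6]{BMS} (and \cite[Theorem 1.2]{BMS0}), and your sketch of the $A_{\mathrm{inf}}$-cohomology mechanism --- the $L\eta_{\mu}$-construction, the \'etale and de Rham specializations, the descent to a Breuil--Kisin module over $\mathfrak{S}$ identified with $\mathfrak{M}(H^2_{\rm{\acute{e}t}}(X_{\overline{K_v}}, \Z_{p}))$ by full faithfulness, and the crystalline specialization of its mod-$u$, Frobenius-twisted reduction --- is an accurate account of what that citation packages.

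There is, however, one concrete point you omit, and it is the only piece of actual mathematical content in the paper's own proof beyond the citation: the good-reduction model $\mathscr{X}$ is merely a proper smooth \emph{algebraic space} over $\Spec\mathscr{O}_{K_v}$, whereas the BMS theorem is stated for proper smooth (formal) schemes. One cannot apply \cite[Theorem 14.6]{BMS} verbatim; the paper notes that the theorem still applies here because the special fiber of $\mathscr{X}$ is a scheme, and refers to \cite[Theorem 2.4]{CCL} for the extension of the comparison to this setting. Since the entire hypothesis of the theorem in this paper is built around an algebraic-space model (good reduction of $K3$ surfaces is only known to produce algebraic spaces in general), this is not a cosmetic issue: without the reduction to the scheme case your argument, as written, does not literally apply to $\mathscr{X}$. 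Everything else in your proposal matches the paper's intent.
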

\begin{proof}
See \cite[Theorem 14.6]{BMS}. (See also \cite[Theorem 1.2]{BMS0}.) Note that we can apply \cite[Theorem 14.6]{BMS} for the proper smooth algebraic space $\mathscr{X}$ over $\Spec\mathscr{O}_{K_v}$ since the special fiber is a scheme; see \cite[Theorem 2.4]{CCL} for details.
\end{proof}

\section{Proof of Theorem \ref{Artin invariant}}\label{Section_proof_thm2}
In this section, we shall prove Theorem \ref{Artin invariant}. We use the same notation as in Section \ref{The Galois module} and Section \ref{F-crystal associated with Lubin-Tate characters}. 
Recall that $\pi_{\p}$ is a fixed uniformizer of $E_{\p}$, 
and we put $d:=[E_{\p, 0}:\Q_p]$. By Proposition \ref{local_lemma}, the quadratic extension $E_{\p}/F_{\q}$ is unramified. Hence we have 
\[
d=[k(\p):\F_p]=2[k(\q):\F_p].
\]

\begin{proof}[\textbf{Proof of Theorem \ref{Artin invariant}}]
The Chern class map 
\[
{\mathrm{ch_{cris}}}\colon \Pic(\mathscr{X}_{\overline{v}}){\otimes}_{\Z}{W} \rightarrow H^2_{\mathrm{cris}}(\mathscr{X}_{\overline{v}}/W)
\]
is injective and preserves pairings. Since the Artin invariant $a$ satisfies $\disc\Pic(\mathscr{X}_{\overline{v}})=-p^{2a}$, it is equal to the length of the cokernel of $\mathrm{ch_{cris}}$ as a $W$-module. We shall show that it is equal to $[k(\q):\F_p]=[k(\p):\F_{p}]/2=d/2$. 

By Theorem \ref{crystalline cohomology1}, we have an isomorphism of $F$-crystals
$$H^2_{\mathrm{cris}}(\mathscr{X}_{\overline{v}}/W)\simeq W(-1)^{\oplus22-[E_{\p}:\Q_p]}\oplus(\O_{E_{\p}}\otimes_{\Z_p}W, \beta),$$ 
where the element $\beta \in \O_{E_{\p}}\otimes_{\Z_p}W$ is taken as in Theorem \ref{crystalline cohomology1}. By \cite[Corollary 1.6]{Ogus}, we have
 $$\Pic(\mathscr{X}_{\overline{v}})\otimes_{\Z}{\Z_p}\simeq H^2_{\mathrm{cris}}(\mathscr{X}_{\overline{v}}/W)^{\varphi=p}.$$ 
Hence the length of the cokernel of the following injection
\[
H^2_{\mathrm{cris}}(\mathscr{X}_{\overline{v}}/W)^{\varphi=p}\otimes_{\Z_p}{W} \rightarrow H^2_{\mathrm{cris}}(\mathscr{X}_{\overline{v}}/W)
\]
as a $W$-module is equal to the Artin invariant $a$.

Recall that $W(-1)$ is the $F$-crystal of rank $1$ whose underlying $W$-module is $W$ and the Frobenius $\varphi$ is given by $\varphi(1)=p$. Hence $1 \in W$ is a $\Z_p$-basis of $W(-1)^{\varphi=p}$, and the canonical map  
\[
W(-1)^{\varphi=p}\otimes_{\Z_p}{W}\rightarrow W
\]
is an isomorphism. Therefore, the Artin invariant $a$ is equal to the length of the cokernel of the following injection
\[
(\O_{E_{\p}}\otimes_{\Z_p}W, \beta)^{\varphi=p}\otimes_{\Z_p}{W} \rightarrow (\O_{E_{\p}}\otimes_{\Z_p}W, \beta)
\]
as a $W$-module.

We shall compute a $\Z_p$-basis of $(\O_{E_{\p}}\otimes_{\Z_p}W, \beta)^{\varphi=p}$.
An element 
$
x \in\O_{E_{\p}}\otimes_{\Z_p}W\simeq\underset{0\leq i \leq d-1}{\prod}W_i
$
satisfies $\varphi(x)=px$ if and only if 
the $i$-th components $x_i \in W_i$ ($0 \leq i \leq d-1$) of $x$ satisfy 
$$
\begin{cases}
\pi_{{\p}}\varphi(x_0)=x_1\\
\pi^{-1}_{{\p}}\varphi(x_{d/2})=x_{d/2+1}\\
\varphi(x_i)=x_{i+1} &(i \neq 0, d/2),\\
\end{cases}
$$
where we put $x_d:=x_0$. 
From these equations, we have $\varphi^d(x_{d/2+1})=x_{d/2+1}$ and hence 
$
x_{d/2+1} \in \O_{E_\p} \subset W_{d/2+1},
$
where we put $W_d:=W_0$.
Conversely, if $x_{d/2+1}$ is an element of $\O_{E_\p} \subset W_{d/2+1}$, then for the elements $x_i \in W_i$ $(0 \leq i \leq d-1)$ determined by the above equations, the element $x=(x_0, \dotsc, x_{d-1})$ is in $(\O_{E_{\p}}\otimes_{\Z_p}W, \beta)^{\varphi=p}$.
Thus, the following composite
\[
\O_{E_{\p}}\otimes_{\Z_p}W \simeq \underset{0\leq i \leq d-1}{\prod}W_i \to W_{d/2+1},
\]
where the second map is the projection, maps
$(\O_{E_{\p}}\otimes_{\Z_p}W, \beta)^{\varphi=p}$
isomorphically onto $\O_{E_\p}$.
We denote by $f$ the induced isomorphism of $\Z_p$-modules
$(\O_{E_{\p}}\otimes_{\Z_p}W, \beta)^{\varphi=p} \simeq \O_{E_\p}$.
The following composite
\[
\xymatrix{
\O_{E_{\p}}\otimes_{\Z_p}W \ar[r]_-{f^{-1}\otimes \mathrm{id}_W}& (\O_{E_{\p}}\otimes_{\Z_p}W, \beta)^{\varphi=p} \otimes_{\Z_p}W \ar[r]_-{}& \O_{E_{\p}}\otimes_{\Z_p}W
}
\]
is equal to the $W$-linear map
$
g_{\pi}\colon \O_{E_{\p}}\otimes_{\Z_p}W \to \O_{E_{\p}}\otimes_{\Z_p}W
$,
$x \mapsto \pi{x}$.
Here
$
\pi \in \O_{E_{\p}}\otimes_{\Z_p}W\simeq\underset{0\leq i \leq d-1}{\prod}W_i
$
is the element whose $i$-th component $\pi_{i}$ is 
$$\pi_{i}=
\begin{cases}
1 & (i=0)\\
\pi_{{\p}} & (1 \leq i \leq d/2)\\
 1 & (d/2+1 \leq i \leq d-1). 
\end{cases}
$$

Therefore, to show Theorem \ref{Artin invariant}, it suffices to show that the length $N$ of the cokernel of $g_\pi$
as a $W$-module is equal to $d/2.$ The length of the cokernel of the injection 
$
W_i \to W_i
$, 
$x \mapsto \pi_{\p}{x}
$
as a $W$-module is equal to $1$ since the extension $\mathrm{Frac}(W_i)/\mathrm{Frac}(W)$ is totally ramified and $\pi_{\p} \in W_i$ is a uniformizer of $\mathrm{Frac}(W_i)$.
From this, we have 
\[
N=d/2=[k(\q):\F_p].
\]

The proof of Theorem \ref{Artin invariant} is complete.\end{proof}

\section{$K3$ surfaces with Picard number $20$}\label{singular K3}
In this section, we study the relation between our results and the results of Shimada \cite{Shimada}, \cite{Shimada2}. 

Let $X_{\C}$ be a projective $K3$ surface over $\C$ whose Picard number is $20$. Such $K3$ surfaces are also called \textit{singular} $K3$ surfaces. The $K3$ surface $X_{\C}$ has $CM$ by an imaginary quadratic field $E$; see \cite[Chapter 3, Remark 3.10]{Huybrechts}. Hence $X_{\C}$ has a model $X_{K}$ over a number field $K \subset \C$ which contains the image of 
\[
\epsilon_{X_{\C}}\colon E(X_{\C})\rightarrow \mathrm{End}_{\C}(H^{2, 0}(X_{\C}))\simeq \C.
\] 

Shimada showed the following results for $K3$ surfaces with Picard number $20$ and $p \neq 2$. 
\begin{thm}[Shimada]\label{Shimada1}
\begin{enumerate}
\item[(1)] Let $p \neq 2$ be an odd prime number, and $v$ a finite place of $K$ whose residue characteristic is $p$. Assume that $p$ does not divide $\disc\Pic(X_{\C})$, the $K3$ surface $X_K$ has good reduction at $v$, and the geometric special fiber $\mathscr{X}_{\overline{v}}$ is supersingular. Then the Artin invariant of $\mathscr{X}_{\overline{v}}$ is $1$, and $p$ does not split in $E$.
\item[(2)] There exists a finite set $S$ of prime numbers containing all the prime numbers dividing $\disc\Pic(X_{\C})$ which satisfies the following property: for every odd prime number $p \notin \{ 2 \} \cup S$ and every finite place $v$ of $K$ whose residue characteristic is $p$, if the $K3$ surface $X_K$ has good reduction at $v$, then the geometric special fiber $\mathscr{X}_{\overline{v}}$ is supersingular if and only if $p$ does not split in $E$. 
\end{enumerate}
 \end{thm}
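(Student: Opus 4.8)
The plan is to deduce both parts from Theorem \ref{supersingular reduction} and Theorem \ref{Artin invariant} by specializing those results to an imaginary quadratic CM field. Since $X_{\C}$ has Picard number $20$, we have $\dim_{\Q}T(X_{\C})\otimes_{\Z}\Q=2$, so $[E:\Q]=2$ and $E$ is imaginary quadratic. Its maximal totally real subfield is $F=\Q$, hence the place $\q$ of $F$ below $v$ is the rational prime $p$, its residue field is $k(\q)=\F_p$, and ``$\q$ splits in $E$'' means exactly ``$p$ splits in $E$''. With these identifications the two statements become assertions about the rational prime $p$.

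Part (2) is then essentially a restatement of Theorem \ref{supersingular reduction}. First I would take $S$ to be the finite set of primes dividing $\disc\Pic(X_{\C})$ (any larger finite set works as well). For $p\notin\{2\}\cup S$ and a finite place $v$ of residue characteristic $p$ at which $X_K$ has good reduction, Theorem \ref{supersingular reduction} gives: if $\q=p$ splits in $E$ then the formal Brauer group of $\mathscr{X}_{\overline{v}}$ has finite height $[E_{\p}:\Q_p]=1$, so $\mathscr{X}_{\overline{v}}$ is not supersingular; and if $p$ does not split then $\mathscr{X}_{\overline{v}}$ is supersingular. This is precisely the desired equivalence. (In fact Theorem \ref{supersingular reduction} imposes no hypothesis on $p$ beyond good reduction, so the equivalence holds for every such $p$; the exclusion of $\{2\}\cup S$ is only to match Shimada's formulation, and the case $p=2$ is recovered in Theorem \ref{Shimada2}.)

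For part (1) I would proceed as follows. Assume $\mathscr{X}_{\overline{v}}$ is supersingular. If $p$ split in $E$, Theorem \ref{supersingular reduction}(1) would force the height of the formal Brauer group to be $[E_{\p}:\Q_p]=1$, contradicting supersingularity; hence $p$ does not split in $E$. To compute the Artin invariant I would invoke Theorem \ref{Artin invariant}, so its two hypotheses must be verified. The first, $p\nmid\disc\Pic(X_{\C})$, is given. The second, that $\mathrm{End}_{\mathrm{Hdg}}(T(X_{\C}))\otimes_{\Z}\Z_{(p)}\simeq\mathscr{O}_E\otimes_{\Z}\Z_{(p)}$, is the crux (this is the content of Proposition \ref{singular3}), and I would establish it by the following chain. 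Because $H^{2}(X_{\C},\Z(1))$ is unimodular and $T(X_{\C})=\Pic(X_{\C})^{\perp}$ is its orthogonal complement, the two lattices have isomorphic discriminant groups, so $|\disc T(X_{\C})|=|\disc\Pic(X_{\C})|$ and in particular $p\nmid\disc T(X_{\C})$. Writing $\mathscr{O}:=\mathrm{End}_{\mathrm{Hdg}}(T(X_{\C}))$ as an order in $E$ of conductor $\mathfrak{f}$, with $T(X_{\C})$ a proper $\mathscr{O}$-ideal $\mathfrak{a}$ and the intersection form given by $\langle x,y\rangle=\mathrm{Tr}_{E/\Q}(dxc(y))$ for some $d\in\Q^{\times}$ (Lemma \ref{quadratic space}), a Gram determinant computation yields, up to sign, $\disc T(X_{\C})=\pm d^{2}N(\mathfrak{a})^{2}\mathfrak{f}^{2}d_E$, where $d_E$ is the discriminant of $E$. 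Since $p$ is odd and $p\nmid\disc T(X_{\C})$, the square factors force $p\nmid d$ and $p\nmid N(\mathfrak{a})$, and then $p\nmid\mathfrak{f}$ (and $p\nmid d_E$, consistent with Proposition \ref{local_lemma}); in particular $\mathscr{O}\otimes_{\Z}\Z_{(p)}=\mathscr{O}_E\otimes_{\Z}\Z_{(p)}$, which is the second hypothesis. With both hypotheses in hand, Theorem \ref{Artin invariant} gives that the Artin invariant of $\mathscr{X}_{\overline{v}}$ equals $[k(\q):\F_p]=[\F_p:\F_p]=1$.

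The hard part will be the verification of the endomorphism hypothesis, i.e.\ bounding the conductor of $\mathscr{O}$ by the discriminant of the Picard lattice. The delicate points are the exact normalization of the trace form (the scalar $d$ and the ideal norm $N(\mathfrak{a})$, both of which must be shown to be prime to $p$) and the use of the parity of $p$: the implication ``$p\mid\mathfrak{f}\Rightarrow p\mid\disc T(X_{\C})$'' relies on $p$ being odd, which is exactly why $p=2$ is excluded here and must be handled separately.
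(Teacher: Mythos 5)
Your proposal is correct in substance, but it takes a different route from the paper's treatment of this particular statement: the paper proves Theorem \ref{Shimada1} simply by citing Shimada's original lattice-theoretic arguments (\cite[Proposition 5.5]{Shimada}, \cite[Theorem 1]{Shimada2}), and the derivation you give is essentially the paper's proof of the \emph{stronger} Theorem \ref{Shimada2}, which subsumes Theorem \ref{Shimada1} and also covers $p=2$. Within that derivation, the one place where you genuinely diverge is the verification of the endomorphism hypothesis of Theorem \ref{Artin invariant}: you bound the conductor $\mathfrak{f}$ of the order $\mathscr{O}=\mathrm{End}_{\mathrm{Hdg}}(T(X_{\C}))$ via the Gram determinant $\pm d^{2}N(\mathfrak{a})^{2}\mathfrak{f}^{2}d_{E}$, whereas Proposition \ref{singular3} carries out an explicit computation with the intersection matrix of $T(X_{\C})$ and identifies $\mathscr{O}\otimes_{\Z}\Z_{(p)}$ with $\Z_{(p)}[T]/(a_1'T^2+a_2T+p^{2n}a_3')$, an argument that works uniformly for all $p$ including $p=2$. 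Two small corrections to your write-up: you should not invoke Lemma \ref{quadratic space} at that point, since as stated it already assumes the maximality of $\mathscr{O}\otimes_{\Z}\Z_{(p)}$ that you are trying to prove (the trace-form representation $\langle x,y\rangle=\mathrm{Tr}_{E/\Q}(dxc(y))$ with $d\in\Q^{\times}$ holds for an arbitrary order by the same standard argument, so this is only a matter of citation); and your closing claim that the implication $p\mid\mathfrak{f}\Rightarrow p\mid\disc T(X_{\C})$ needs $p$ odd is not right — since $\mathfrak{f}^{2}$ divides the Gram determinant, the conductor bound holds for $p=2$ as well, so your argument in fact proves slightly more than you claim (consistent with Theorem \ref{Shimada2}). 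What the citation to Shimada buys is independence from the CM machinery; what your (and the paper's) route buys is a uniform treatment that extends to $p=2$ and to higher-degree CM fields.
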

 \begin{proof}
 See \cite[Proposition 5.5]{Shimada} and \cite[Theorem 1]{Shimada2}.
 \end{proof}
 \begin{rem}
 \rm{Shimada states above results using the Legendre symbol $\displaystyle \bigg( \frac{\disc\Pic(X_{\C})}{p} \bigg).$ We shall show that for an odd prime number $p\neq2$, $\displaystyle \bigg( \frac{\disc\Pic(X_{\C})}{p} \bigg)=-1$ if and only if $p$ does not split in $E$; see Lemma \ref{singular4} below.}
 \end{rem}
 
We shall prove our results imply Shimada's results, and Shimada's results also hold in the case $p=2$. More precisely, we shall prove the following result.
\begin{thm}\label{Shimada2}
Let $p$ be a prime number. Let $v$ be a finite place of $K$ whose residue characteristic is $p$. Assume that $p$ does not divide $\disc \Pic(X_{\C})$, and the $K3$ surface $X_K$ has good reduction at $v$. Then the geometric special fiber $\mathscr{X}_{\overline{v}}$ is supersingular if and only if $p$ does not split in $E$. Moreover, if $\mathscr{X}_{\overline{v}}$ is supersingular, its Artin invariant is $1$. 
\end{thm}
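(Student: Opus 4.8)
The plan is to reduce Theorem \ref{Shimada2} entirely to Theorem \ref{supersingular reduction} and Theorem \ref{Artin invariant}, after translating the hypotheses into their setting. Since $E$ is imaginary quadratic, its maximal totally real subfield is $F=\Q$, so the place $\q$ of $F$ lying below $v$ is the place attached to $p$, and its residue field is $k(\q)=\F_p$; in particular the condition ``$\q$ splits (resp.\ does not split) in $E$'' of Theorem \ref{supersingular reduction} is literally ``$p$ splits (resp.\ does not split) in $E$.'' With this dictionary the supersingularity criterion is immediate: by Theorem \ref{supersingular reduction} (1), if $p$ splits in $E$ then the Picard number of $\mathscr{X}_{\overline{v}}$ equals $22-[E:\Q]=20$, so $\mathscr{X}_{\overline{v}}$ is not supersingular; and by Theorem \ref{supersingular reduction} (2), if $p$ does not split in $E$ then the Picard number is $22$ and $\mathscr{X}_{\overline{v}}$ is supersingular. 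This proves the first assertion.

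For the Artin invariant I would invoke Theorem \ref{Artin invariant}, which, once its hypotheses are met, gives Artin invariant $=[k(\q):\F_p]=[\F_p:\F_p]=1$, exactly the desired value. Its first hypothesis, $p\nmid\disc\Pic(X_{\C})$, is part of our assumptions, so everything comes down to verifying its second hypothesis, the isomorphism
\[
\mathrm{End}_{\mathrm{Hdg}}(T(X_{\C}))\otimes_{\Z}{\Z_{(p)}}\simeq\mathscr{O}_{E}\otimes_{\Z}{\Z_{(p)}}.
\]
Because $X_{\C}$ has Picard number $20$, the transcendental lattice $T(X_{\C})$ has rank $2$ and $\mathscr{O}:=\mathrm{End}_{\mathrm{Hdg}}(T(X_{\C}))$ is an order in the imaginary quadratic field $E$; writing $f=[\mathscr{O}_{E}:\mathscr{O}]$ for its conductor, the displayed isomorphism holds precisely when $p\nmid f$.

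The heart of the matter is therefore to deduce $p\nmid f$ from $p\nmid\disc\Pic(X_{\C})$, and this is where I expect the main obstacle to lie. First, since $H^{2}(X_{\C},\Z)$ with the cup product is an even unimodular lattice and $T(X_{\C})$ is the orthogonal complement of the primitive sublattice $\Pic(X_{\C})$, the discriminant groups of $T(X_{\C})$ and $\Pic(X_{\C})$ are isomorphic, whence $|\disc T(X_{\C})|=|\disc\Pic(X_{\C})|$ and $p\nmid\disc T(X_{\C})$. Next, viewing $T(X_{\C})$ as an invertible $\mathscr{O}$-module inside $T(X_{\C})\otimes_{\Z}\Q\simeq E$, the intersection form has the shape $\langle x,y\rangle=\mathrm{Tr}_{E/\Q}(dxc(y))$ for some $d\in\Q^{\times}$ as in Lemma \ref{quadratic space}, and one checks that $\disc T(X_{\C})$ agrees, up to a nonzero rational square and the factor $\mathrm{Nm}_{E/\Q}(d)$, with the order discriminant $\disc\mathscr{O}=f^{2}d_{E}$, where $d_{E}$ is the discriminant of $E$. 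A $p$-adic valuation count then forces $p\nmid f^{2}d_{E}$, and in particular $p\nmid f$; carrying this comparison out cleanly, keeping track of the scalar $d$ and of the module structure, is precisely the content of Proposition \ref{singular3}. Once $p\nmid f$ is established the second hypothesis of Theorem \ref{Artin invariant} holds, and that theorem yields Artin invariant $1$, completing the proof.
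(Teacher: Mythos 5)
Your proposal is correct and follows essentially the same route as the paper: the paper's proof of this theorem is precisely the reduction to Theorem \ref{supersingular reduction} (with $F=\Q$, so $\q$ is the place at $p$ and $[k(\q):\F_p]=1$) together with Theorem \ref{Artin invariant}, whose second hypothesis is supplied by Proposition \ref{singular3}. Your discriminant-of-orders sketch for why $p\nmid\disc\Pic(X_{\C})$ forces $\mathrm{End}_{\mathrm{Hdg}}(T(X_{\C}))\otimes_{\Z}\Z_{(p)}\simeq\mathscr{O}_E\otimes_{\Z}\Z_{(p)}$ is only heuristic (the valuation count needs the integrality and evenness of the lattice to rule out cancellation between $\nu_p(d)$ and $\nu_p(f)$), but since you ultimately cite Proposition \ref{singular3} for that step, exactly as the paper does, the argument is complete.
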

\begin{proof}
The assertions follow from Theorem \ref{supersingular reduction}, Theorem \ref{Artin invariant}, and Proposition \ref{singular3} below.
\end{proof}
Before stating Proposition \ref{singular3}, we make some preparations. Let $p$ be a prime number not dividing $\disc\Pic(X_{\C})$. Let 
\[\left(
\begin{array}{cc}
a_1 & a_2   \\
a_2 & a_3
\end{array}
\right) 
\]
be the intersection matrix of $T(X_{\C})$ with respect to a $\Z$-basis $e_1, e_2$. Since $T(X_{\C})$ is an even positive definite lattice, we have $a_1\neq0$ and $a_3 \neq 0$, and we can write $a_1$ and $a_3$ in the following form:
\[
\begin{cases}
a_1=2p^n{a'_1} \\
a_3=2p^n{a'_3}
\end{cases}
\]
for some $n \geq0$ such that at least one of $a'_1$ or $a'_3$ is not divisible by $p$. Exchanging $e_1, e_2$ if necessary, we may assume $a'_1$ is not divisible by $p$.
We shall show that the assumption that $\disc\Pic(X_{\C})$ is not divisible by $p$ implies the conditions of Theorem \ref{Artin invariant}.

\begin{prop}\label{singular3}
Let $X_{\C}$ be a projective $K3$ surface over $\C$ whose Picard number is $20$. Assume that $X_{\C}$ has $CM$ by an imaginary quadratic field $E$. Let $p$ be a prime number not dividing $\disc\Pic(X_{\C})$. Then the following assertions hold.
\begin{itemize}\item The second condition
$$\mathrm{End}_{\mathrm{Hdg}}(T(X_{\C}))\otimes_{\Z}{\Z_{(p)}}\simeq \mathscr{O}_{E}\otimes_{\Z}{\Z_{(p)}}$$ 
of Theorem \ref{Artin invariant} is satisfied, and
\item there is an isomorphism of $\Z_{(p)}$-algebras
\[
\mathscr{O}_{E}\otimes_{\Z}{\Z_{(p)}}\simeq \Z_{(p)}[T]/(a'_1T^2+a_2T+p^{2n}a'_3).
\]
\end{itemize}
\end{prop}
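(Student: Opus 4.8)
The plan is to reduce the statement to an explicit description of the order $\mathscr{O}:=\mathrm{End}_{\mathrm{Hdg}}(T(X_{\C}))$ inside $E$ via the period of $X_\C$. Since $X_\C$ has Picard number $20$, the lattice $T:=T(X_{\C})$ has rank $2$; with Gram matrix $\begin{pmatrix} a_1 & a_2\\ a_2 & a_3\end{pmatrix}$ in a basis $e_1,e_2$ one has $\disc T=a_1a_3-a_2^2$. Because $T$ and $\Pic(X_{\C})$ are mutually orthogonal primitive sublattices of the \emph{unimodular} lattice $H^2(X_{\C},\Z)$, their discriminant groups are isomorphic, so $|\disc T|=|\disc\Pic(X_{\C})|$ and hence $p\nmid a_1a_3-a_2^2$. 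I set $D:=a_2^2-a_1a_3$, so that $v_p(D)=0$ and $E=\Q(\sqrt{D})$.

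Next I would make the $CM$-structure explicit through the Hodge decomposition. Since $T^{2,0}=\C\omega$ is one-dimensional, writing $\omega=z_1e_1+z_2e_2$ and using $\langle\omega,\omega\rangle=0$ gives $\tau:=z_1/z_2=\frac{-a_2+\sqrt{D}}{a_1}$, a root of $a_1\tau^2+2a_2\tau+a_3=0$. As $E(X_\C)\otimes_\Z\Q=E$ acts on the two-dimensional space $T\otimes_\Z\Q$ making it free of rank $1$ over $E$, a $\Z$-linear $\phi$ lies in $\mathscr{O}$ exactly when $\phi_\C$ preserves $\C\omega$, i.e.\ acts as multiplication by some $\lambda\in E$. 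Transporting everything to $E$ through the $\Q$-linear isomorphism $T\otimes_\Z\Q\xrightarrow{\sim}E$, $e_1\mapsto\tau,\ e_2\mapsto1$, this identifies $\mathscr{O}$ with the ring of multipliers $\{\mu\in E:\mu\Lambda\subseteq\Lambda\}$ of $\Lambda=\Z\tau+\Z$. After $\otimes_\Z\Z_{(p)}$, a short computation with $\mu=x+y\tau$ and $\tau^2=-(2a_2\tau+a_3)/a_1$ shows $\mu\Lambda\subseteq\Lambda$ is equivalent to $x,y\in\Z_{(p)}$ together with $v_p(y)\ge n-v_p(a_2)$, the remaining integrality condition being automatic since $a_3/a_1=a_3'/a_1'\in\Z_{(p)}$.

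The third step is to simplify this condition and pin down the generator. For $n=0$ it reads $x,y\in\Z_{(p)}$; for $n\ge1$ the relation $v_p(D)=0$ together with $v_p(a_1a_3)\ge2$ forces $v_p(a_2)=0$, so the condition becomes $v_p(y)\ge n$. In either case $\mathscr{O}\otimes_\Z\Z_{(p)}=\Z_{(p)}[\theta]$ with $\theta:=p^n\tau=\frac{-a_2+\sqrt{D}}{2a_1'}$; squaring $2a_1'\theta+a_2=\sqrt{D}$ and using $a_1a_3=4p^{2n}a_1'a_3'$ yields $a_1'\theta^2+a_2\theta+p^{2n}a_3'=0$, hence $\mathscr{O}\otimes_\Z\Z_{(p)}\simeq\Z_{(p)}[T]/(a_1'T^2+a_2T+p^{2n}a_3')$. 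The discriminant of this order equals $D/(a_1')^2$, a $p$-adic unit, so the order is maximal at $p$, giving $\mathscr{O}\otimes_\Z\Z_{(p)}=\mathscr{O}_{E}\otimes_\Z\Z_{(p)}$. This proves both assertions at once.

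I expect the main obstacle to be twofold: first, justifying cleanly that $\mathrm{End}_{\mathrm{Hdg}}(T)$ \emph{is} the multiplier ring of $\Lambda$ via the period, and second, the valuation bookkeeping—most delicately the deduction $v_p(a_2)=0$ when $n\ge1$ from the single hypothesis $p\nmid\disc T$. The factor $2$ coming from evenness of $T$ must be tracked carefully so that the count runs uniformly for every prime, including $p=2$ where $v_2(a_1),v_2(a_3)\ge1$; pleasantly, this causes no extra trouble, and once these points are settled the maximality follows formally from the discriminant being a $p$-adic unit.
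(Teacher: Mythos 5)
Your proposal is correct, and its computational core coincides with the paper's: both arguments rest on the period ratio $\tau$ satisfying $a_1\tau^2+2a_2\tau+a_3=0$, on the rescaled element $\theta=p^n\tau$ (the paper's $\beta$) with minimal polynomial $a'_1T^2+a_2T+p^{2n}a'_3$, and on the key observation that $p\nmid\disc\Pic(X_{\C})$ forces $p\nmid a_2$ when $n\geq 1$. You diverge at the two ends. At the start you package $\mathrm{End}_{\mathrm{Hdg}}(T(X_{\C}))$ as the multiplier ring of $\Lambda=\Z\tau+\Z$, where the paper does the bare-hands computation $f(\alpha e_1+e_2)=\gamma(\alpha e_1+e_2)$ and compares coefficients; the two give the same ring, but note that the literal transport of $f$ under $e_1\mapsto\tau$, $e_2\mapsto 1$ is \emph{not} multiplication by $\gamma$ (that role is played by the transposed matrix) — the correct statement is that the eigenvalue $\gamma=\epsilon_{X_{\C}}(f)=y\tau+w$ ranges exactly over $\{\mu\in E:\mu\Lambda\subseteq\Lambda\}$, which is the point you flagged and which rescues your identification. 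At the end you establish maximality at $p$ by computing $\disc(\Z_{(p)}[\theta])=D/(a'_1)^2$ to be a $p$-adic unit and invoking the index--discriminant formula $\disc\mathscr{O}=[\mathscr{O}_E:\mathscr{O}]^2\disc\mathscr{O}_E$; the paper instead takes a general element $s+t\beta$ of $\mathscr{O}_E\otimes_{\Z}\Z_{(p)}$ and chases its minimal polynomial to show $s,t\in\Z_{(p)}$. Your finish is shorter and uniform in $p$ (including $p=2$) at the cost of quoting a standard fact; the paper's is self-contained. You also make explicit the unimodularity argument giving $|\disc T(X_{\C})|=|\disc\Pic(X_{\C})|$, which the paper uses without comment.
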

\begin{proof}
Take a unique element $\alpha \in \C \mathop{\backslash} \Q$ such that
$$\alpha e_1+ e_2 \in T(X_{\C})\otimes_{\Z}{\C}\simeq T^{1, -1}\oplus T^{-1, 1}$$ 
is a generator of $T^{1, -1}$ as a $\C$-vector space.
Since $T^{1, -1}$ is an isotropic line, we have
\[
2p^n{a'_1}\alpha^2+2a_2\alpha+2p^n{a'_3}=0.
\]
Hence we have $[\Q(\alpha):\Q]=2$.
We take $f \in \mathrm{End}_{\mathrm{Hdg}}(T(X_{\C}))\otimes_{\Z}{\Z_{(p)}}$ and put 
$$
\begin{cases}
f(e_1)=xe_1+ye_2\\
f(e_2)=ze_1+we_2
\end{cases}
$$
for some $x, y, z, w \in \Z_{(p)}.$ Since $f$ preserves the Hodge structure on $T(X_{\C})\otimes_{\Z}{\Z_{(p)}}$, there exists $\gamma \in \C$ such that
$f(\alpha e_1+ e_2)=\gamma(\alpha e_1+ e_2).$ 
We have 
\begin{align*}
f(\alpha e_1+ e_2)&=\alpha(xe_1+ye_2)+ze_1+we_2\\
&=(x\alpha +z)e_1+(y\alpha +w)e_2.
\end{align*}
Comparing the coefficients of $e_1$ and $e_2$, we have $x\alpha+z=\gamma\alpha $ and $y\alpha+w=\gamma$.
According to the first equality, the homomorphism $\epsilon_{X_{\C}}$ maps $E$ into $\Q(\alpha)$, and hence induces an isomorphism $E \simeq \Q(\alpha)$.

We put $\beta:=p^{n}\alpha$. Then $\beta$ satisfies
\[
a'_1{\beta}^2+a_2\beta+p^{2n}a'_3=0.
\]
In particular, we have $\beta \in \O_{\Q(\alpha)}\otimes_{\Z}\Z_{(p)}$.
Multiplying $p^{2n}$ on the both hand sides of
$
x\alpha+z=y\alpha^2+w\alpha,
$
we have 
\begin{align*}
p^{n}{x}\beta+p^{2n}z&=y\beta^2+p^{n}{w}\beta\\
&=-a_2(a'_1)^{-1}y\beta-p^{2n}a'_3(a'_1)^{-1}y+p^{n}{w}\beta.
\end{align*}
Comparing the coefficients of $1$ and $\beta$, we have
 \[
\begin{cases}
p^{n}{x}=-a_2(a'_1)^{-1}y+p^{n}{w}\\
z=-a'_3(a'_1)^{-1}y.
\end{cases}
\]
We claim that $\epsilon_{X_{\C}}$ induces an isomorphism
\[
\epsilon_{X_{\C}}\colon\mathrm{End}_{\mathrm{Hdg}}(T(X_{\C}))\otimes_{\Z}{\Z_{(p)}} \simeq \Z[\beta]\otimes_{\Z}{\Z_{(p)}}.
\] 
If $n=0$, this follows from the above equalities. If $n>0$, then $a_2$ is not divisible by $p$ since $\disc\Pic(X_{\C})$ is not divisible by $p$. Therefore, from the first equality above, we see that $y$ has of the form 
$y=p^{n}y'$
for some $y' \in \Z_{(p)}$. 
We have $\epsilon_{X_{\C}}(f)=\gamma=y'\beta+w\in \Z[\beta]\otimes_{\Z}{\Z_{(p)}}$.
Hence the following morphism is induced:
\[
\epsilon_{X_{\C}}\colon\mathrm{End}_{\mathrm{Hdg}}(T(X_{\C}))\otimes_{\Z}{\Z_{(p)}}\rightarrow \Z[\beta]\otimes_{\Z}{\Z_{(p)}}. 
\]
From above equalities, we see that this morphism is surjective. Hence it is an isomorphism.

To complete the proof of Proposition \ref{singular3}, we need to show $\Z[\beta]\otimes_{\Z}{\Z_{(p)}}=\O_{\Q(\alpha)}\otimes_{\Z}{\Z_{(p)}}$. Take $s+t\beta \in \O_{\Q(\alpha)}\otimes_{\Z}{\Z_{(p)}}$ $(s, t \in \Q)$. We want to show $s, t \in \Z_{(p)}$. Since $s+t\beta$ is integral over $\Z_{(p)}$, there exists a polynomial $$g(T)=T^2+b_1T+b_2 \in \Z_{(p)}[T]$$ such that $g(s+t\beta)=0$. 
Using $a'_1{\beta}^2+a_2\beta+p^{2n}a'_3=0$ and $g(s+t\beta)=0$, we have 
\[
\begin{cases}
s^2-p^{2n}a'_3(a'_1)^{-1}t^2+b_1s+b_2=0\\
2st-a_2(a'_1)^{-1}t^2+b_1t=0.
\end{cases}
\]
From the second equality, we have $t=0$ or $2s-a_2(a'_1)^{-1}t+b_1=0$.

 If $t=0$, we see that $s \in \Q$ is integral over $\Z_{(p)}$. Hence we have $s \in \Z_{(p)}$. 
 
 We assume $2s-a_2(a'_1)^{-1}t+b_1=0$. From the first equality, we have 
 \[
(a_2(a'_1)^{-1}t-b_1)^2-4p^{2n}a'_3(a'_1)^{-1}t^2+2b_1(a_2(a'_1)^{-1}t-b_1)+4b_2=0. \]
Hence we have
 \[
 \frac{{a^2_2}-4p^{2n}a'_1a'_3}{a'^2_1}t^2+(\mathrm{terms\ of\ degree\ \leq 1\ with\ coefficients\ in}\ \Z_{(p)})=0.
 \]
Since 
 $
 \disc\Pic(X_{\C})={a^2_2}-4p^{2n}a'_1a'_3
 $ and it is not divisible by $p$, we conclude that $t \in \Q$ is integral over $\Z_{(p)}$. Hence we have $t \in \Z_{(p)}.$ It follows that $s=(s+t\beta)-t\beta \in \Q$ is integral over $\Z_{(p)}$. Hence we have $s \in \Z_{(p)}$.
 
The proof of Proposition \ref{singular3} is complete.
\end{proof}

We note that Proposition \ref{singular3} does not hold in general for $K3$ surfaces over $\C$ with $CM$ whose Picard number is less than $20$. (For counterexamples, see Example \ref{counter}.)

In the rest of this section, we study the relation between the condition that $p$ does not split in $E$ and the structure of the lattice $\Pic(X_{\C})$.
\begin{lem}\label{singular4}
Let $p\neq 2$ be an odd prime number which does not divide $\disc\Pic(X_{\C})$. Then, $\displaystyle\bigg( \frac{\disc\Pic(X_{\C})}{p} \bigg)=-1$ if and only if $p$ does not split in $E$. 
\end{lem}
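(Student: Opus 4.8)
The plan is to reduce the splitting behavior of $p$ in $E$ to the factorization modulo $p$ of the quadratic polynomial furnished by Proposition \ref{singular3}, and then to identify its discriminant with $\disc\Pic(X_{\C})$.

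First I would invoke Proposition \ref{singular3}, which under the hypothesis $p \nmid \disc\Pic(X_{\C})$ gives an isomorphism of $\Z_{(p)}$-algebras
$$\mathscr{O}_{E}\otimes_{\Z}{\Z_{(p)}}\simeq \Z_{(p)}[T]/(a'_1T^2+a_2T+p^{2n}a'_3).$$
Since $\mathscr{O}_{E}\otimes_{\Z}\Z_{(p)}$ is the localization of $\mathscr{O}_E$ at $p$, the Kummer--Dedekind theorem shows that the way $p$ decomposes in $\mathscr{O}_E$ is governed by the factorization of $g(T):=a'_1T^2+a_2T+p^{2n}a'_3$ in $\F_p[T]$. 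Here it is crucial that $a'_1$ is not divisible by $p$ (which was arranged by possibly exchanging $e_1$ and $e_2$), so that $g$ remains a genuine degree-two polynomial modulo $p$ and its roots in $\overline{\F}_p$ correspond to the primes of $\mathscr{O}_E$ above $p$.

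Next I would compute the discriminant of $g$, namely $a_2^2-4a'_1p^{2n}a'_3$, which equals $\disc\Pic(X_{\C})$ by the identity already established in the proof of Proposition \ref{singular3}. Since $p\nmid\disc\Pic(X_{\C})$, the polynomial $g$ is separable modulo $p$, so $p$ is unramified in $E$ and hence is either split or inert. A separable quadratic over $\F_p$ splits into two distinct linear factors exactly when its discriminant is a nonzero square modulo $p$, and is irreducible exactly when its discriminant is a non-square; that is, $p$ splits in $E$ if and only if $\left(\frac{\disc\Pic(X_{\C})}{p}\right)=1$. As $p\nmid\disc\Pic(X_{\C})$ forces the Legendre symbol to equal $\pm1$, this is equivalent to the assertion that $\left(\frac{\disc\Pic(X_{\C})}{p}\right)=-1$ if and only if $p$ does not split in $E$, which is the claim.

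The argument is essentially a routine translation of prime decomposition in a quadratic order into a quadratic-residue condition, so I do not anticipate a serious obstacle once Proposition \ref{singular3} is in hand. The only point requiring genuine care is ensuring that the leading coefficient $a'_1$ is a unit at $p$, equivalently that the order $\Z_{(p)}[T]/(g)$ really is the maximal order $\mathscr{O}_E\otimes_{\Z}\Z_{(p)}$ so that the reduction of $g$ faithfully detects the primes above $p$; this is exactly what Proposition \ref{singular3} guarantees under the hypothesis $p\nmid\disc\Pic(X_{\C})$.
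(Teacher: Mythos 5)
Your proposal is correct and follows essentially the same route as the paper: both start from the presentation $\mathscr{O}_{E}\otimes_{\Z}\Z_{(p)}\simeq \Z_{(p)}[T]/(a'_1T^2+a_2T+p^{2n}a'_3)$ of Proposition \ref{singular3} and reduce the splitting of $p$ to whether the discriminant $a_2^2-4a'_1p^{2n}a'_3=\disc\Pic(X_{\C})$ is a square modulo $p$ (the paper implements this by completing the square, which is where it uses $p\neq 2$, exactly as your discriminant criterion does). The only cosmetic difference is that you phrase the reduction via Kummer--Dedekind rather than an explicit change of variable.
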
 
\begin{proof}
By Proposition \ref{singular3}, we have an isomorphism
\[
\mathscr{O}_{E}\otimes_{\Z}{\Z_{(p)}}\simeq \Z_{(p)}[T]/(a'_1T^2+a_2T+p^{2n}a'_3).
\]
Since $p \neq 2$, by changing the variable $T$, 
we have an isomorphism
\[
\mathscr{O}_{E}\otimes_{\Z}{\Z_{(p)}}\simeq \Z_{(p)}[T]/(T^2-({a^2_2}-4p^{2n}a'_1a'_3)).
\]
Since $\disc\Pic(X_{\C})={a^2_2}-4p^{2n}a'_1a'_3$, we see that
$
\bigg( \frac{\disc\Pic(X_{\C})}{p} \bigg)=-1
$
if and only if $p$ does not split in $E$.\end{proof}

In the case $p=2$, we always have $\displaystyle\bigg( \frac{\disc\Pic(X_{\C})}{2} \bigg)=1$ if $\disc\Pic(X_{\C})$ is not divisible by $2$.  Alternatively, we have the following result.

\begin{lem}\label{singular5}
Assume that $\disc\Pic(X_{\C})$ is not divisible by $2$. Then the following conditions are equivalent:
\begin{itemize}
\item $n=0$ and $a'_3$ is not divisible by $2$. 
\item $2$ does not split in $E$.
\end{itemize}\end{lem}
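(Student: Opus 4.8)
The plan is to reduce the splitting type of $2$ in $E$ to the factorization over $\F_2$ of the quadratic polynomial furnished by Proposition \ref{singular3}. Tensoring the isomorphism of that proposition with $\Z_2$ gives
$$\mathscr{O}_E \otimes_\Z \Z_2 \simeq \Z_2[T]/(a'_1 T^2 + a_2 T + 2^{2n} a'_3),$$
and since by construction $a'_1$ is not divisible by $p = 2$, it is a unit in $\Z_2$, so I may rescale to the monic polynomial $f(T) = T^2 + bT + c$ with $b = a_2(a'_1)^{-1}$ and $c = 2^{2n} a'_3 (a'_1)^{-1}$. The point is that $\mathscr{O}_E \otimes_\Z \Z_2 \simeq \Z_2[T]/(f)$ is the completion of the maximal order of $E$ at $2$, so the decomposition of $2$ is read off directly from $f$.

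First I would record two parity facts. From $\disc\Pic(X_\C) = a_2^2 - 4\cdot 2^{2n} a'_1 a'_3$ and the hypothesis that this quantity is odd, one gets $a_2^2$ odd, hence $a_2$ odd; therefore $b \equiv 1 \pmod 2$ and the reduction of $f$ is $\bar f(T) = T^2 + T + \bar c$ over $\F_2$. Second, the discriminant $b^2 - 4c = \disc\Pic(X_\C)/(a'_1)^2$ is a $2$-adic unit, so $\bar f$ is separable over $\F_2$. This separability rules out the ramified case and licenses Hensel's lemma, so that $2$ splits in $E$ exactly when $\bar f$ has two distinct roots in $\F_2$, and is inert (hence does not split) exactly when $\bar f$ is irreducible over $\F_2$.

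Next I would run the short case analysis on $\bar c \in \F_2$. If $\bar c = 0$ then $\bar f = T(T+1)$ has the distinct roots $0$ and $1$, so $2$ splits; if $\bar c = 1$ then $\bar f = T^2 + T + 1$ has no root in $\F_2$ and is irreducible, so $2$ does not split. It then remains only to translate $\bar c = 1$ into the stated form: since $c = 2^{2n} a'_3 (a'_1)^{-1}$ and $a'_1$ is a $2$-adic unit, one has $\bar c = a'_3 (a'_1)^{-1} \bmod 2$ when $n = 0$ and $\bar c = 0$ when $n \geq 1$; hence $\bar c = 1$ holds precisely when $n = 0$ and $a'_3$ is odd, which is exactly the first condition of the lemma. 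This yields the desired equivalence.

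The step that requires the most care is the analogue of the Legendre-symbol argument of Lemma \ref{singular4}, which is unavailable at $p = 2$: namely the passage from the abstract order $\mathscr{O}_E \otimes_\Z \Z_2$ to the factorization of $\bar f$. Here I must confirm both that no ramified prime can occur (so that ``does not split'' indeed means ``inert'') and that Hensel's lemma applies to lift a factorization mod $2$. Both hinge on $\disc\Pic(X_\C)$ being odd, which forces $\bar f$ to be separable over $\F_2$; this is the one place where the hypothesis on the discriminant is genuinely used, the remainder being the elementary parity bookkeeping above.
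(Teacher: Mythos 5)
Your proof is correct and follows essentially the same route as the paper: both start from the isomorphism $\mathscr{O}_{E}\otimes_{\Z}\Z_{(2)}\simeq \Z_{(2)}[T]/(a'_1T^2+a_2T+2^{2n}a'_3)$ of Proposition \ref{singular3}, note that $a_2$ is odd, and read off the splitting of $2$ from the constant term. The paper compresses this into the single assertion that $2$ does not split if and only if $2^{2n}a'_3$ is odd, whereas you supply the justification (separability of the reduction mod $2$ ruling out ramification, plus Hensel's lemma and the factorization of $T^2+T+\bar c$ over $\F_2$); this is a welcome elaboration rather than a different argument.
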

\begin{proof}
Since $\disc\Pic(X_{\C})$ is not divisible by $2$, we have
\[
\mathscr{O}_{E}\otimes_{\Z}{\Z_{(2)}}\simeq \Z_{(2)}[T]/(a'_1T^2+a_2T+2^{2n}a'_3)
\]
by Proposition \ref{singular3}. Note that $a_2$ is not divisible by $2$ since $\disc\Pic(X_{\C})$ is not divisible by $2$. 
We see that $2$ does not split in $E$ if and only if $2^{2n}a'_3$ is not divisible by $2$.
Hence the assertion follows. \end{proof}

\section{Examples}\label{examples}
In this section, we show that there are Kummer surfaces over $\C$ with $CM$ which do not satisfy the assumptions and the conclusion of Theorem \ref{Artin invariant}.
We also recall some basic results on Kummer surfaces with $CM$.

Let $V$ be a $\Q$-vector space with a $\Q$-Hodge structure.
Let 
$\mathbb{S}:=\mathrm{Res}_{\C/\R}\mathbb{G}_{m}
$
be the Deligne torus, and 
$
h\colon \mathbb{S}\rightarrow \mathrm{GL}(V)_{\R}
$
the homomorphism corresponding the $\Q$-Hodge structure on $V$.
The \textit{Mumford-Tate group} of $V$
is defined as the smallest algebraic $\Q$-subgroup ${\mathrm{MT}}(V)$ of $\GL(V)$ such that
$h(\mathbb{S}(\R)) \subset {\mathrm{MT}}(V)(\R)$.

\begin{rem}\label{MT and Hdg}
	\rm{
	\begin{enumerate}
	\item[(1)] In \cite[Section 3]{Deligne900}, the definition of the Mumford-Tate group of $V$ is given in a slightly different manner.
	It is easy to see that the Mumford-Tate group of $V$ in the sense of \cite[Section 3]{Deligne900} is commutative if and only if the Mumford-Tate group ${\mathrm{MT}}(V)$ of $V$ given here is so.
	\item[(2)] Let $\mathbb{U}_1$ be the kernel of the norm character $\mathrm{Nm}\colon \mathbb{S} \to \mathbb{G}_m$. The \textit{Hodge group} of $V$
is defined as the smallest algebraic $\Q$-subgroup ${\mathrm{Hdg}}(V)$ of $\GL(V)$ such that
$h(\mathbb{U}_1(\R)) \subset {\mathrm{Hdg}}(V)(\R)$.
The following basic facts are used freely in this paper:
\begin{itemize}
	\item ${\mathrm{Hdg}}(V)$ is commutative if and only if ${\mathrm{MT}}(V)$ is so.
	\item For an integer $n$ and the Tate twist $V(n):=V\otimes_{\Q}\Q(n)$, we have ${\mathrm{Hdg}}(V) \simeq {\mathrm{Hdg}}(V(n))$.
	\item If the $\Q$-Hodge structure on $V$ is of weight $0$, we have $\mathrm{MT}(V) \simeq {\mathrm{Hdg}}(V)$.
\end{itemize}
\end{enumerate}}
\end{rem}

Recall that an abelian variety $A$ over $\C$ has \textit{complex multiplication} $(CM)$ if the Mumford-Tate group ${\mathrm{MT}}(H^1(A, \Q))$ of the $\Q$-Hodge structure on the first singular cohomology $H^1(A, \Q)$ is commutative. If $A$ is a simple abelian variety over $\C$, then $A$ has $CM$ if and only if 
\[
\mathrm{End}(A)\otimes_{\Z}{\Q}
\]
is a commutative field over which $H^1(A, \Q)$ has dimension $1$. Moreover, in this case, the field $\mathrm{End}(A)\otimes_{\Z}{\Q}$ is a $CM$ field; see \cite[Proposition 5.1]{Deligne900}.
Every abelian variety $A$ over $\C$ is isogenous to a product of simple abelian varieties, and $A$ has $CM$ if and only if each simple factor has $CM$. It follows that an abelian variety $A$ over $\C$ has $CM$ if and only if there is a $\Q$-algebra $E$ which is a product of $CM$ fields and a homomorphism
\[
i\colon E \rightarrow \mathrm{End}(A)\otimes_{\Z}{\Q}
\]
such that $H^1(A, \Q)$ is a free $E$-module of rank $1$ via $i$. For such $E$, we say $A$ has $CM$ by $E$. 

We give two lemmas on abelian varieties with $CM$, which are presumably well-known to specialists.
\begin{lem}\label{H1H2}
An abelian variety $A$ over $\C$ with $\dim(A) \geq 2$ has $CM$ if and only if the Mumford-Tate group ${\mathrm{MT}}(H^2(A, \Q))$ of the $\Q$-Hodge structure on the second singular cohomology $H^2(A, \Q)$ is commutative. 
\end{lem}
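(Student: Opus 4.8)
The plan is to reduce everything to the standard isomorphism of $\Q$-Hodge structures $H^2(A,\Q)\simeq \bigwedge^2 H^1(A,\Q)$, which holds for every complex abelian variety since its rational cohomology ring is the exterior algebra on $H^1(A,\Q)$. Writing $V:=H^1(A,\Q)$, so that $\dim_{\Q}V=2\dim(A)\geq 4$, and recalling that by definition $A$ has $CM$ precisely when $\mathrm{MT}(V)$ is commutative, the lemma becomes the purely group-theoretic statement that, for $\dim_{\Q}V\geq 3$, the group $\mathrm{MT}(V)$ is commutative if and only if $\mathrm{MT}(\bigwedge^2 V)$ is.

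The key step is to identify $\mathrm{MT}(\bigwedge^2 V)$ with the image of $\mathrm{MT}(V)$ under the natural representation $\psi\colon \mathrm{MT}(V)\to \GL(\bigwedge^2 V)$. Since the homomorphism attached to the Hodge structure on $\bigwedge^2 V$ is $\psi_{\R}\circ h_V$, where $h_V\colon \mathbb{S}\to \mathrm{MT}(V)_{\R}$ is the homomorphism for $V$, the inclusion $\mathrm{MT}(\bigwedge^2 V)\subseteq \psi(\mathrm{MT}(V))$ is immediate. For the reverse inclusion I would note that $\psi^{-1}(\mathrm{MT}(\bigwedge^2 V))$ is an algebraic $\Q$-subgroup of $\GL(V)$ whose real points contain $h_V(\mathbb{S}(\R))$; by the minimality built into the definition of $\mathrm{MT}(V)$ it must equal $\mathrm{MT}(V)$, whence $\psi(\mathrm{MT}(V))\subseteq \mathrm{MT}(\bigwedge^2 V)$. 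Next I would compute $\ker\psi$: because $\dim_{\Q}V\geq 3$, the kernel of $\bigwedge^2\colon \GL(V)\to \GL(\bigwedge^2 V)$ is the central subgroup $\{\pm 1\}$, so $\ker\psi=\mathrm{MT}(V)\cap\{\pm 1\}$ is finite and central. It is exactly here that the hypothesis $\dim(A)\geq 2$ is used; for $\dim(A)=1$ the map $\bigwedge^2$ is the determinant, its kernel is positive-dimensional, and the statement fails since $H^2$ is then always of $CM$ type.

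With these two facts the equivalence follows. If $\mathrm{MT}(V)$ is commutative, then so is its image $\mathrm{MT}(\bigwedge^2 V)$, giving one implication. Conversely, if $\mathrm{MT}(\bigwedge^2 V)$ is commutative, then $\psi$ annihilates every commutator, so $[\mathrm{MT}(V),\mathrm{MT}(V)]\subseteq \ker\psi\subseteq\{\pm 1\}$; since Mumford–Tate groups are connected, the commutator morphism $\mathrm{MT}(V)\times\mathrm{MT}(V)\to\{\pm 1\}$ has irreducible source and finite target containing the identity, hence is constant equal to the identity, and therefore $\mathrm{MT}(V)$ is commutative. I expect the main obstacle to be the identification $\mathrm{MT}(\bigwedge^2 V)=\psi(\mathrm{MT}(V))$, where the minimality argument is what makes the nontrivial inclusion work, together with the bookkeeping (connectedness of $\mathrm{MT}(V)$ and finiteness of $\ker\psi$) needed to upgrade ``commutators are central of finite order'' to genuine commutativity.
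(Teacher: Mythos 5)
Your proposal is correct and follows exactly the route the paper has in mind: the paper's own proof is omitted as ``standard'' except for the single remark that the kernel of $\GL(H^1(A,\Q))\to\GL(\wedge^2H^1(A,\Q))$ is $\{\pm 1\}$ because $\dim_{\Q}H^1(A,\Q)\geq 3$, which is precisely the key step you identify and justify. Your write-up simply supplies the standard details (the identification $\mathrm{MT}(\wedge^2V)=\psi(\mathrm{MT}(V))$ via minimality, and connectedness of Mumford--Tate groups to pass from ``commutators in $\{\pm1\}$'' to commutativity), all of which are sound.
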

\begin{proof}
Since the proof is standard, we omit the details.
We only remark that the kernel of the homomorphism
$$\mathrm{GL}(H^1(A, \Q))\rightarrow \mathrm{GL}(\wedge^{2}H^1(A, \Q)) \simeq \mathrm{GL}(H^2(A, \Q))$$
is $\{ \pm 1 \}$ since $\dim_{\Q}H^1(A, \Q)=2\dim(A)  \geq 3$.
\end{proof}

Let $A$ be an abelian surface over $\C$. We denote the Kummer surface associated with $A$ by $\mathrm{Km}(A)$. (For the construction and basic properties of Kummer surfaces, see \cite[Chapter 1, Example 1.3 (iii)]{Huybrechts}.)

\begin{lem}\label{Kummer_CM}
An abelian surface $A$ over $\C$ has $CM$ if and only if the Kummer surface $\mathrm{Km}(A)$ is a $K3$ surface with $CM$.
\end{lem}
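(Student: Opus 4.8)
The plan is to compare the transcendental $\Q$-Hodge structures of $A$ and of $\mathrm{Km}(A)$, and to encode the $CM$ property on both sides as the commutativity of one and the same Mumford-Tate group. Recall first that $\mathrm{Km}(A)$ is always a $K3$ surface; see \cite[Chapter 1, Example 1.3 (iii)]{Huybrechts}, so the content is purely about the $CM$ condition. Write $T(A):=\mathrm{NS}(A)^{\perp}\subset H^2(A,\Z(1))$ for the transcendental part of $H^2(A)$, with its weight $0$ $\Q$-Hodge structure. The key geometric input I would use is the classical fact that the quotient $A\to A/\{\pm 1\}$, followed by the minimal resolution of the $16$ nodes, induces an isomorphism of $\Q$-Hodge structures
\begin{equation*}
T(\mathrm{Km}(A))\otimes_{\Z}{\Q}\simeq T(A)\otimes_{\Z}{\Q}.
\end{equation*}
Indeed, the involution $[-1]$ acts by $-1$ on $H^1(A)$ and hence trivially on $H^2(A)\simeq\wedge^2 H^1(A)$; the $16$ exceptional curves span an algebraic, hence type $(1,1)$, subspace lying in $\mathrm{NS}(\mathrm{Km}(A))\otimes_{\Z}{\Q}$; and the orthogonal complement of those classes in $H^2(\mathrm{Km}(A),\Q)$ is Hodge-isometric, up to rescaling the pairing by $2$, to $H^2(A,\Q)$. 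Passing to transcendental parts yields the displayed isomorphism, the rescaling of the form being irrelevant to the underlying Hodge structure. Since the Mumford-Tate group depends only on the Hodge structure, this gives $\mathrm{MT}(T(\mathrm{Km}(A))\otimes_{\Z}{\Q})\simeq\mathrm{MT}(T(A)\otimes_{\Z}{\Q})$.

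Next I would recast the $CM$ condition on the $K3$ side. For the genuine $K3$ surface $\mathrm{Km}(A)$, the definition used in this paper (that $E(\mathrm{Km}(A))$ is a $CM$ field and $T(\mathrm{Km}(A))\otimes_{\Z}{\Q}$ is one-dimensional over it) is equivalent to commutativity of $\mathrm{MT}(T(\mathrm{Km}(A))\otimes_{\Z}{\Q})$. This follows from Zarhin's results recalled in Section \ref{CMsection}: if the Hodge structure has $CM$ in the above sense then $\mathrm{MT}$ centralizes $E(\mathrm{Km}(A))$ and hence lies in the torus $\mathrm{Res}_{E/\Q}\mathbb{G}_m$, while conversely commutativity of $\mathrm{MT}$ forces, via the irreducibility of the transcendental Hodge structure and the constraint $h^{2,0}=1$ (which rules out the totally real alternative in Zarhin's dichotomy), the endomorphism field to be $CM$ with the transcendental part one-dimensional over it. Combined with the isomorphism of the previous paragraph, $\mathrm{Km}(A)$ has $CM$ if and only if $\mathrm{MT}(T(A)\otimes_{\Z}{\Q})$ is commutative.

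On the abelian side, Lemma \ref{H1H2} gives that $A$ has $CM$ if and only if $\mathrm{MT}(H^2(A,\Q))$ is commutative, equivalently (Remark \ref{MT and Hdg}) if and only if the Hodge group $\mathrm{Hdg}(H^2(A,\Q))$ is commutative. Now $H^2(A,\Q)$ is the orthogonal sum of the Néron-Severi part, of pure type $(1,1)$, and the transcendental part. On a pure type $(1,1)$ weight-two summand the torus $\mathbb{U}_1$ acts trivially, so the Hodge group of $H^2(A,\Q)$ maps isomorphically onto the Hodge group of the transcendental part; by Remark \ref{MT and Hdg} the latter is unchanged under Tate twist and has the same commutativity as $\mathrm{MT}(T(A)\otimes_{\Z}{\Q})$. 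Hence $A$ has $CM$ if and only if $\mathrm{MT}(T(A)\otimes_{\Z}{\Q})$ is commutative. Combining the two reductions, both ``$A$ has $CM$'' and ``$\mathrm{Km}(A)$ has $CM$'' are equivalent to commutativity of $\mathrm{MT}(T(A)\otimes_{\Z}{\Q})$, which proves the lemma.

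I expect the main obstacle to be the geometric input in the first paragraph: producing a clean Hodge-structure isomorphism between the transcendental parts of $A$ and $\mathrm{Km}(A)$, with correct bookkeeping of the factor of $2$ in the intersection form and of the Tate twists needed to match weights. Once this compatibility of Hodge structures is established, everything else is a formal manipulation of Mumford-Tate and Hodge groups together with the already-proved Lemma \ref{H1H2} and the standard $CM$-versus-commutativity criterion for $K3$-type Hodge structures.
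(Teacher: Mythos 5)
Your proposal is correct and follows essentially the same route as the paper: Beauville's Hodge-theoretic comparison between $H^{2}(A,\Q(1))$ and $H^{2}(\mathrm{Km}(A),\Q(1))$, Zarhin's characterization of $CM$ for $K3$-type Hodge structures via commutativity of the Mumford-Tate group, and Lemma \ref{H1H2} on the abelian side. The only difference is cosmetic: you pass through the transcendental part $T(A)\otimes_{\Z}\Q$ and justify separately that the type $(1,1)$ summands do not affect the Mumford-Tate group, whereas the paper states the equality $\mathrm{MT}(T(\mathrm{Km}(A))\otimes_{\Z}\Q)=\mathrm{MT}(H^{2}(A,\Q(1)))$ directly.
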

\begin{proof}
By Zarhin's results on the Hodge group of a $K3$ surface \cite[Theorem 2.2.1]{Zarhin}, \cite[Theorem 2.3.1]{Zarhin}, it follows that a $K3$ surface $X$ over $\C$ has $CM$ if and only if the Mumford-Tate group
$\mathrm{MT}(T(X)\otimes_{\Z}\Q)$ is commutative.
(See also \cite[Proposition 8]{Taelman}.)

There is an isomorphism of $\Q$-Hodge structures
$$H^2(A, \Q(1))\oplus\bigoplus_{1\leq i \leq 16}\Q[Z_i] \simeq H^2(\mathrm{Km}(A),\Q(1)), $$ 
where $[Z_i]$ are the classes of non-singular rational curves $Z_i \subset \mathrm{Km}(A)$ arising from the $2$-torsion points on $A$; see \cite[Lemme 3]{Beauville}. 
It follows that 
$$\mathrm{MT}(T(\mathrm{Km}(A))\otimes_{\Z}\Q)=\mathrm{MT}(H^2(A, \Q(1))).$$
We achieve the proof of Lemma \ref{Kummer_CM} from this equality and Lemma \ref{H1H2}.
\end{proof}

\begin{exm}
\rm{Let $C$ be an elliptic curve over $\C$ with $CM$. The endomorphism algebra 
\[
E(C):=\mathrm{End}(C)\otimes_{\Z}{\Q}
\] 
is an imaginary quadratic field. 
Let $A$ be an abelian surface over $\C$ isogenous to the self-product $C\times_{\C}C$. 
Then the Kummer surface $\mathrm{Km}(A)$ has Picard number $20$, and it has $CM$ by $E(C)$.}
\end{exm}

\begin{exm}\label{CM_C1C2}
\rm{Let $C_1$ and $C_2$ be elliptic curves over $\C$ with $CM$ which are not isogenous to each other. In this case, the imaginary quadratic field $E({C_1})$ is not isomorphic to $E({C_2})$. Hence the $\Q$-algebra 
$E:=E({C_1})\otimes_{\Q}E({C_2})$
is a $CM$ field of degree $4$. Let $A$ be an abelian surface over $\C$ isogenous to $C_1\times_{\C}C_2$. It has $CM$ by $E({C_1})\times E({C_2})$. Let $(E(C_1), \phi_1)$ (resp.\ $(E(C_2), \phi_2)$) be the $CM$ type of $C_1$ (resp.\ $C_2$). By Lemma \ref{Kummer_CM}, the Kummer surface $\mathrm{Km}(A)$ has $CM$. Considering the action of $E({C_1})\times E({C_2})$ on $H^2(A, \Q(1))\simeq H^2(C_1\times_{\C}C_2, \Q(1))$, we see that the image of 
\[
\epsilon_{\mathrm{Km}(A)}\colon E(\mathrm{Km}(A))\rightarrow \mathrm{End}_{\C}(H^{2, 0}(\mathrm{Km}(A)))\simeq \C
\]
contains the elements $\phi_1(x)$ for $x \in E(C_1)$ and $\phi_2(y)$ for $y \in E(C_2)$.
Since the Picard number of $A$ is equal to $2$ by \cite[Chapter IV, Section 20, The Rosati involution, (3)]{Mumford}, the Picard number of $\mathrm{Km}(A)$ is $18$.
Thus, we see that $[E(\mathrm{Km}(A)):\Q]=4$ and $E(\mathrm{Km}(A))$ is isomorphic to $E$.
We conclude that the Kummer surface $\mathrm{Km}(A)$ has $CM$ by $E$ and its Picard number is $18$.}
\end{exm}

\begin{exm}
\rm{Let $A$ be a simple abelian surface over $\C$ with $CM$. The endomorphism algebra $E:=\mathrm{End}(A)\otimes_{\Z}{\Q}$ is a $CM$ field with $[E:\Q]=4$. The Kummer surface $\mathrm{Km}(A)$ has $CM$ by Lemma \ref{Kummer_CM}. The $CM$ type $(E, \Phi)$ of $A$ is primitive \cite[Chapter IV, Section 22, Remark (1)]{Mumford}. We put $\Phi=\{\phi_1, \phi_2 \}$. Recall that the reflex field $E^{*}$ is the subfield of $\C$ generated by the elements $\phi_{1}(x)+\phi_2(x)$ for $x \in E$. Let $E^{*}_0$ be the subfield of $\C$ generated by the elements $\phi_1(x)\phi_2(x)$ for $x \in E$. Since 
\[
\phi_{1}(x)+\phi_2(x)=\phi_1(x+1)\phi_2(x+1)-\phi_1(x)\phi_2(x)-1\in E^{*}_0 ,
\]
we have $E^{*}\subset E^{*}_0$. Conversely, since
\[
2\phi_1(x)\phi_2(x)=(\phi_{1}(x)+\phi_2(x))^{2}-(\phi_{1}(x^2)+\phi_2(x^2))\in E^{*},
\]
we have $E^{*}_0\subset E^{*}.$ Since $[E:\Q]=4$ and the $CM$ type $(E, \Phi)$ is primitive, the reflex field $E^{*}$ satisfies $[E^{*}:\Q]=4$; see \cite[Example 8.4]{Shimura}. Considering the action of $E=\mathrm{End}(A)\otimes_{\Z}{\Q}$ on $H^2(A, \Q(1))$, we see that the image of 
\[
\epsilon_{\mathrm{Km}(A)}\colon E(\mathrm{Km}(A))\rightarrow \mathrm{End}_{\C}(H^{2, 0}(\mathrm{Km}(A)))\simeq \C
\]
contains the elements $\phi_1(x)\phi_2(x)$ for $x \in E$.
Since the Picard number of $A$ is equal to $2$ 
by \cite[Chapter IV, Section 20, The Rosati involution, (3)]{Mumford}, 
the Picard number of $\mathrm{Km}(A)$ is $18$.
Hence we have $[E(\mathrm{Km}(A)):\Q]=4$. 
Therefore the image of $E(\mathrm{Km}(A))$ by $\epsilon_{\mathrm{Km}(A)}$ is equal to $E^{*}$. We conclude that the Kummer surface $\mathrm{Km}(A)$ has Picard number $18$, and it has $CM$ by the reflex field $E^{*}$ of the $CM$ type $(E, \Phi)$ of $A$.}
\end{exm}

Currently, we do not know how to generalize Theorem \ref{Artin invariant} to $K3$ surfaces with $CM$ not satisfying the assumptions of Theorem \ref{Artin invariant}.
We shall give some examples.
In Example \ref{counter2} below, we shall construct a Kummer surface over $\C$ with $CM$ such that it does not satisfy the assumptions of Theorem \ref{Artin invariant} but the conclusion of Theorem \ref{Artin invariant} holds for it.

\begin{exm}\label{counter2}
\rm{Let $E$ be an imaginary quadratic field   such that there is a prime number $p \neq 2$ which is ramified in $E$.
Let $C$ be an elliptic curve over $\C$ with $CM$ by $E$.
We put $A:=C \times_{\C} C$.
As in Remark \ref{elliptic curve reduction}, the elliptic curve $C$ is defined over a number field $K$ containing $E$, it has potential good reduction at a finite place $v$ of $K$ above $p$, and the good reduction of $C$ is supersingular.
It follows that $\mathrm{Km}(A)$ has potential good reduction at $v$, the reduction is supersingular, and the Artin invariant of the reduction is $1$ by \cite[Corollary 7.14]{Ogus}.
By Proposition \ref{local_lemma}, the Kummer surface $\mathrm{Km}(A)$ does not satisfy the assumptions of Theorem \ref{Artin invariant}.
Nevertheless, it satisfies the conclusion of Theorem \ref{Artin invariant}.
}
\end{exm}

Finally, we shall construct a Kummer surface over $\C$ with $CM$ such that it does not satisfy the second assumption of Theorem \ref{Artin invariant} and the conclusion of Theorem \ref{Artin invariant} does not hold for it.

\begin{exm}\label{counter}
\rm{Let $C_1$ and $C_2$ be elliptic curves over $\C$ with $CM$ satisfying $E({C_1})=\Q(\sqrt{-5})$, $\mathrm{End}(C_1)=\O_{\Q(\sqrt{-5})}$, $E({C_2})=\Q(\sqrt{-15})$, and $\mathrm{End}(C_2)=\O_{\Q(\sqrt{-15})}$. We put $E:=\Q(\sqrt{-5}, \sqrt{-15})$ and $A:=C_1\times_{\C}C_2$. By Example \ref{CM_C1C2}, the Kummer surface $\mathrm{Km}(A)$ has $CM$ by $E$. The N$\mathrm{{\acute{e}}}$ron-Severi lattice $NS(A)$ of $A$ is isomorphic to the hyperbolic plane $U$, i.e.\ $U$ is the lattice $U:=\Z{e} \oplus \Z{f}$
whose intersection product is given by $(e, e)=(f, f)=0$ and $(e, f)=1$. Let 
\[
T(A):=NS(A)^{\perp} \subset H^{2}(A, \Z(1)).
\]
be the transcendental lattice of $A$. We have 
\[
T(A)\simeq(H^1(C_1, \Z)\otimes_{\Z} H^1(C_2, \Z))(1).
\]
It is known that there is an isomorphism of $\Z$-Hodge structures
\[
\xymatrix{\phi\colon T(A)\ar[r]^-{\simeq}& T(\mathrm{Km}(A))}
\]
such that $(\phi(x), \phi(y))=2(x, y)$ for every $x, y \in T(A)$; see \cite[Chapter 3, Section 2.5]{Huybrechts}. Therefore, the discriminant $\disc T(\mathrm{Km}(A))$ is not divisible by any odd prime number $p\neq2$. Now, we take $p=5$. The canonical isomorphism
\[
\mathrm{End}_{\Z}(H^1(C_1, \Z))\otimes_{\Z}\mathrm{End}_{\Z}(H^1(C_2, \Z))\simeq\mathrm{End}_{\Z}(H^1(C_1, \Z)\otimes_{\Z} H^1(C_2, \Z))
\]
induces an injection
\[
\mathrm{End}_{\mathrm{Hdg}}(H^1(C_1, \Z))\otimes_{\Z}\mathrm{End}_{\mathrm{Hdg}}(H^1(C_2, \Z))\rightarrow \mathrm{End}_{\mathrm{Hdg}}(H^1(C_1, \Z)\otimes_{\Z} H^1(C_2, \Z))
\]
whose cokernel is torsion-free.
Composing it with the following isomorphisms
\[
\mathrm{End}_{\mathrm{Hdg}}(H^1(C_1, \Z)\otimes_{\Z} H^1(C_2, \Z))\simeq \mathrm{End}_{\mathrm{Hdg}}(T(A))\simeq\mathrm{End}_{\mathrm{Hdg}}(T(\mathrm{Km}(A))),
\]
we have an injective homomorphism
\[
\mathrm{End}_{\mathrm{Hdg}}(H^1(C_1, \Z))\otimes_{\Z}\mathrm{End}_{\mathrm{Hdg}}(H^1(C_2, \Z))\rightarrow \mathrm{End}_{\mathrm{Hdg}}(T(\mathrm{Km}(A)))
\]
whose cokernel is torsion-free. Since the both hand sides are free $\Z$-modules of rank $4$, this injection is an isomorphism. Hence we have
\begin{align*}
\mathrm{End}_{\mathrm{Hdg}}(T(\mathrm{Km}(A)))&\simeq \mathrm{End}_{\mathrm{Hdg}}(H^1(C_1, \Z))\otimes_{\Z}\mathrm{End}_{\mathrm{Hdg}}(H^1(C_2, \Z)) \\
&\simeq\mathrm{End}(C_1)\otimes_{\Z}\mathrm{End}(C_2) \\
&\simeq \O_{\Q(\sqrt{-5})}\otimes_{\Z}\O_{\Q(\sqrt{-15})}.
\end{align*}
On the other hand, we see that $\sqrt{3}$ is in $\O_E\otimes_{\Z}{\Z_{(5)}}$, but $\sqrt{3}=\sqrt{-15}/{\sqrt{-5}}$ is not in $(\O_{\Q(\sqrt{-5})}\otimes_{\Z}\O_{\Q(\sqrt{-15})})\otimes_{\Z}{\Z_{(5)}}$. 
Hence $\mathrm{End}_{\mathrm{Hdg}}(T(\mathrm{Km}(A)))\otimes_{\Z}{\Z_{(5)}}$ is not isomorphic to $\mathscr{O}_{E}\otimes_{\Z}{\Z_{(5)}}$. 
Therefore the Kummer surface $\mathrm{Km}(A)$ does not satisfy the second assumption of Theorem \ref{Artin invariant} for $p = 5$.

As in Remark \ref{elliptic curve reduction}, the elliptic curve $C_1$ (resp.\ $C_2$) is defined over a number field $K$ containing $E$, and it has potential good reduction at a finite place $v$ of $K$ above $5$.
Since $5$ is ramified in $\Q(\sqrt{-5})$ (resp.\ $\Q(\sqrt{-15})$),
the good reduction of $C_1$ (resp.\ $C_2$)  is supersingular.
It follows that $\mathrm{Km}(A)$ has potential good reduction at $v$,  the reduction is supersingular, and the Artin invariant of the reduction is $1$ by \cite[Corollary 7.14]{Ogus}.
Since $5$ is inert in the maximal totally real subfield $F:=\Q(\sqrt{3})$ of $E$, the Kummer surface $\mathrm{Km}(A)$ does not satisfy the conclusion of Theorem \ref{Artin invariant}.
}
\end{exm}
      
\subsection*{Acknowledgements}
The author would like to thank my adviser, Tetsushi Ito, for his kindness, support, and advice. He gave me a lot of invaluable suggestions and pointed out some mistakes in an earlier version of this paper. The author also would like to thank Teruhisa Koshikawa for helpful suggestions on Breuil-Kisin modules and the integral $p$-adic Hodge theory, and Christian Liedtke for helpful comments. 
The author would especially like to thank Yuya Matsumoto for comments on the calculation of the Artin invariants and giving information on the literature on automorphisms of $K3$ surfaces.
Moreover the author would like to thank the anonymous referees for sincere remarks and comments.
The work of the author was supported by Research Fellowships of Japan Society for the Promotion of Science for Young Scientists KAKENHI Grant Number 18J22191.

\end{document}